\documentclass[11pt]{article}
\usepackage[utf8]{inputenc}
\usepackage[T1]{fontenc}
\usepackage{float}
\usepackage[final]{graphicx}
\usepackage{indentfirst}
\usepackage{fancyhdr}
\usepackage[a4paper]{geometry}
\usepackage{amsmath}
\usepackage{amssymb}

\usepackage{longtable}
\usepackage{booktabs}

\usepackage{listings}
\usepackage[unicode]{hyperref}
\usepackage{array}
\usepackage{pifont}
\usepackage{fancybox}
\usepackage{makecell}
\usepackage{xcolor}
\usepackage{siunitx}
\usepackage{arydshln}
\usepackage{eso-pic}
\usepackage{hyperref}
\usepackage{eurosym}
\usepackage{csquotes}
\usepackage{footmisc}
\usepackage{systeme}
\usepackage{amsthm}
\usepackage{tikz}
\usepackage{cleveref}
\usepackage{hyperref}
\usepackage{amsthm}
\usepackage{multirow}
\usetikzlibrary{intersections}
\usetikzlibrary{calc}
\usetikzlibrary{arrows.meta}     
\usetikzlibrary{decorations.pathreplacing}

\usepackage[most]{tcolorbox}
\usepackage{enumitem}

\usepackage{authblk}
\usepackage[authoryear,round]{natbib}

\usepackage{centernot}

\usepackage{caption}
\usepackage{subcaption}

\newcounter{example}

\providecommand{\keywords}[1]{%
  \par\smallskip\noindent\textbf{Keywords:} #1\par\smallskip}

\newcommand{\interp}[2]{\par\smallskip\noindent\textit{Interpretation#2 Example \ref{example_microgrid}: #1}\par\smallskip}

\newtheorem{theorem}{Theorem}

\tcolorboxenvironment{theorem}{
  colback=gray!10,
  colframe=gray!60,
  boxrule=0.8pt,
  arc=2mm,
  left=6pt,
  right=6pt,
  top=6pt,
  bottom=6pt
}

\tcolorboxenvironment{lemma}{
  colback=gray!10,
  colframe=gray!60,
  boxrule=0.8pt,
  arc=2mm,
  left=6pt,
  right=6pt,
  top=6pt,
  bottom=6pt
}

\tcolorboxenvironment{corollary}{
  colback=gray!10,
  colframe=gray!60,
  boxrule=0.8pt,
  arc=2mm,
  left=6pt,
  right=6pt,
  top=6pt,
  bottom=6pt
}

\tcolorboxenvironment{definition}{
  colback=gray!10,
  colframe=gray!60,
  boxrule=0.8pt,
  arc=2mm,
  left=6pt,
  right=6pt,
  top=6pt,
  bottom=6pt
}

\tcolorboxenvironment{remark}{
  colback=gray!10,
  colframe=gray!60,
  boxrule=0.8pt,
  arc=2mm,
  left=6pt,
  right=6pt,
  top=6pt,
  bottom=6pt
}

\tcolorboxenvironment{observation}{
  colback=gray!10,
  colframe=gray!60,
  boxrule=0.8pt,
  arc=2mm,
  left=6pt,
  right=6pt,
  top=6pt,
  bottom=6pt
}

\newtheorem{lemma}{Lemma}

\theoremstyle{definition}
\newtheorem{definition}{Definition}

\theoremstyle{remark}

\newtheorem{observation}{Observation}

\newcommand{\vct}[1]{\mathbf{#1}}

\newcommand{\y}{\vct{y}}
\renewcommand{\b}{\vct{b}}
\renewcommand{\c}{\vct{c}}
\renewcommand{\d}{\vct{d}}
\newcommand{\x}{\vct{x}}

\renewcommand{\a}{\vct{a}}

\renewcommand{\u}{\vct{u}}
\renewcommand{\v}{\vct{v}}
\renewcommand{\P}{\mathcal{P}}
\newcommand{\Q}{\mathcal{Q}}
\newcommand{\btheta}{\theta}
\newcommand{\bkappa}{\kappa}
\newcommand{\bnu}{\nu}

\title{Analyzing changes in optimal variables in linear programming with uncertain parameters}

\author[1,2]{Baptiste Istace}
\author[1]{Guillaume Derval}
\author[1]{Bardhyl Miftari}
\author[1]{Quentin Louveaux}
\affil[1]{University of Li\`ege, Montefiore Institute, Belgium}
\affil[2]{Corresponding author. \textit{Email address:} baptiste.istace@uliege.be}

\date{}

\begin{document}
\lhead{}
\rhead{}
\lfoot{}
\rfoot{}
\maketitle
\pagestyle{fancy}



\begin{abstract}
Linear problems often include many parameters that may be uncertain. Sensitivity analysis studies how these parameters impact optimal values. Instead of analyzing the objective function, we shift the focus to the optimal values of the variables. Three types of linear modifications are considered: on the cost vector, the right-hand side, and on the constraint matrix. Several theorems establish properties of these modifications, including conditions for continuity of optimal variable values, as well as local convexity and concavity properties.
\end{abstract}

\keywords{Sensitivity analysis, Linear programming, Parametric programming,
Optimization under uncertainty, Set-valued mapping}

\section{Introduction}
Optimization models are built on assumptions on key values such as costs, demand, revenues, deadlines, and available resources. The solution obtained is optimal only with respect to these assumptions. When they change significantly, the optimal solution obtained may become suboptimal or even infeasible. In other words, an optimal solution in a reference scenario can become suboptimal when the environment differs from the initial assumptions. 
In this paper, the set of optimal solutions for a linear programming problem is examined as a function of a parameter’s variation, with a focus on how the optimal decisions themselves change rather than on the evolution of the optimal objective value.

This kind of approach is important in applications where input parameters depend on external conditions, making optimization results more sensitive to the differences between the expected scenario and the observed scenario. For example, in the energy sector, the increasing use of renewable energies makes the operation of electrical systems more dependent on external variables such as temperature, wind, solar energy, and weather. These variables directly influence production levels, but also the balance between supply and demand, which affects price levels, adjustment requirements, and, more generally, operational decisions. \citet{MOSQUERALOPEZ2024107789} show that temperature, wind, solar radiation, and precipitation have a significant effect on electricity prices, with nonlinear relationships and particularly marked impacts during extreme events. In logistics and industry, perturbations on strategic sea routes such as the Red Sea and the Suez Canal have illustrated how geopolitical uncertainty can increase transit times, raise costs, and undermine "optimal" plans built on assumed stable deadlines and capacities. The Suez Canal blockage incident was studied in particular, quantifying its economic losses and the effects of delays on maritime transport networks \citep{TRAN2025109464}. 

More generally, optimization results depend on the values of the parameters used in the model. It therefore becomes necessary to examine how the solution responds to data perturbations and to identify the parameters that cause the most significant changes in the resulting decisions. This is precisely the approach taken in sensitivity analysis. This viewpoint is also related to robust and stochastic optimization, although the objective differs: rather than computing a single solution that performs well across a range of scenarios, it characterizes how the set of optimal solutions itself changes as a parameter varies. Robust optimization seeks solutions that remain feasible or perform well for all realizations within a given set of uncertain conditions, whereas stochastic optimization includes uncertainty through scenarios or probability distributions and optimizes an expected or stochastic criterion. In contrast, the goal is not to compute a robust stochastic solution. Rather, the study focuses on how the sets of optimal solutions for a deterministic linear program evolve when a parameter varies.

The field of sensitivity analysis studies the effect of variations in model parameters on the solution obtained. Traditionally, its goal is to observe how the optimal value obtained changes when the input data is slightly modified. This allows key parameters influencing the results to be identified, and system performance to be studied in different scenarios. However, in many applications, understanding the variation in the objective value is not sufficient. Indeed, decision makers need to know \emph{how the decisions themselves change}, and which variables (or combinations of variables) are responsible for these changes.
This decision-focused approach, which concentrates on the evolution of sets of optimal solutions rather than on optimal values alone, is the perspective adopted in this paper.

\subsection{Formalization and related works}
\noindent
Consider the standard linear optimization problem:
\begin{equation}
    \begin{aligned}
        \P \equiv \min_\x \quad & \c^t\x\\
        \text{s.t.} \quad & A\x\le \b \\
         & \x \geq 0 
    \end{aligned}
\end{equation}
where $\c \in \mathbb{R}^{n}$, $\x \in \mathbb{R}_+^{n}$, $\b \in \mathbb{R}^{m}$ and $A \in \mathbb{R}^{m\times n}$, with $m$ the number of constraints and $n$ the number of  variables.
Define the feasible region $K := \{\x\in\mathbb{R}^n_+ \mid A\x \le \b\}$. Such a model depends on external parameters, which are distributed into the three main parts of the model: the cost vector $\c$, the right-hand side $\b$, and the constraint matrix $A$. When some of these external parameters are unknown or subject to change, practitioners can be willing to study the impact of modifying $A$, $\b$, or $\c$, each corresponding to different sources of uncertainty:
\begin{itemize}
    \item Modifications of $\c$ allow to encode uncertainty of various costs (energy prices, transportation costs, installation costs, CAPEX, OPEX, penalties, ...);
    \item Modifications of $\b$ can be modifications of demands or known capacities;
    \item Modifications of $A$ modify the structure of the problem (by modifying the sparsity of $A$ and which variable are involved in which constraint), and can also model modification of yields, consumption coefficients, travel times, ...
\end{itemize}
This article studies how linear modifications of $A$, $\b$, and $\c$ affect the optimal solutions, with a particular focus on the optimal values assigned to each variable. Let $ \lambda \in \Lambda \subseteq \mathbb{R}$ be a scalar parameter that models uncertainty, and let $D \in \mathbb{R}^{m\times n}$ be the perturbation directions for the objective/right-hand side and for the constraint matrix, respectively. Note that by abuse of notation, we use $\d \in \mathbb{R}^{n}$ for cost perturbations and $\d \in \mathbb{R}^{m}$ for right-hand side perturbations.  Each possible modification is studied separately:

\begin{equation}
    \begin{minipage}[c]{0.32\linewidth}
        \centering
        \vspace{0pt}
        $\begin{aligned}
        \P_c(\lambda) \equiv & \min_\x  \  (\c+ \lambda \d)^t\x\\
        \text{s.t.} \quad & A\x\le \b \\
         & \x \geq 0 
        \end{aligned}$
    \end{minipage}
    \hfill
    \begin{minipage}[c]{0.32\linewidth}
        \vspace{0pt}
        \centering
        $\begin{aligned}
        \P_b(\lambda) \equiv & \min_\x \  \c^t\x\\
        \text{s.t.} \quad & A\x\le \b+  \lambda \d \\
         & \x \geq 0 
        \end{aligned}$
    \end{minipage}
    \hfill
    \begin{minipage}[c]{0.32\linewidth}
        \vspace{0pt}
        \centering
        $\begin{aligned}
        \P_A(\lambda) \equiv & \min_\x \  \c^t\x\\
        \text{s.t.} \quad & (A+\lambda D)\x\le \b \\
         & \x \geq 0 
        \end{aligned}$
    \end{minipage}
\end{equation}
For each of these perturbed problems, the associated optimal value functions are denoted by $f_c(\lambda)$, $f_b(\lambda)$, and $f_A(\lambda)$, and the corresponding feasible sets are denoted by $K_c(\lambda)$, $K_b(\lambda)$, and $K_A(\lambda)$. Under perturbations of the cost vector $\c$ ($\P_c(\lambda)$), the constraints remain unchanged, so that the set of feasible solutions does not vary with $\lambda$; consequently, $K_c(\lambda)=K,\ \forall \lambda$. On the other hand, perturbations of the RHS vector $\b$ or the constraint matrix $A$ affect the constraints themselves, so that the set of feasible solutions becomes dependent on $\lambda$. Accordingly, the feasible sets are defined as $K_b(\lambda) := \{\x\in\mathbb{R}^n \mid A\x \le \b+\lambda\d, \x \geq 0\}$ and $K_A(\lambda) := \{\x\in\mathbb{R}^n \mid (A+\lambda D)\x \le \b, \x \geq 0\}.$

The corresponding dual problems are also analyzed. The dual of $\P$ can be written as
\begin{equation}\label{eq:dualQ}
    \begin{aligned}
        \Q \equiv \max_{\y} \quad & \b^t\y\\
        \text{s.t.} \quad & A^t\y \le \c\\
         & \y \leq 0,
    \end{aligned}
\end{equation}
where $\y \in \mathbb{R}^{m}$ is the vector of dual variables and the dual feasible region is $K^D:= \{\y\in\mathbb{R}^m \mid A^t\y\le \c,\ \y\le 0\}$. Accordingly, the associated dual problem forms the following parametric problems: 
\begin{equation}
    \begin{minipage}[c]{0.32\linewidth}
        \centering
        \vspace{0pt}
        $\begin{aligned}
        \Q_c(\lambda) \equiv & \max_\y \ \b ^t\y\\
        \text{s.t.} \quad & A^t\y\le \c+ \lambda \d\\
         & \y \leq 0 
        \end{aligned}$
    \end{minipage}
    \hfill
    \begin{minipage}[c]{0.32\linewidth}
        \vspace{0pt}
        \centering
        $\begin{aligned}
        \Q_b(\lambda) \equiv & \max_\y \  (\b+  \lambda \d)^t\y\\
        \text{s.t.} \quad & A^t\y\leq \c  \\
         & \y \leq 0 
        \end{aligned}$
    \end{minipage}
    \hfill
    \begin{minipage}[c]{0.32\linewidth}
        \vspace{0pt}
        \centering
        $\begin{aligned}
        \Q_A(\lambda) \equiv & \max_\y \ \b^t\y\\
        \text{s.t.} \quad & (A+\lambda D)^t\y\le \c \\
         & \y \leq 0 
        \end{aligned}$
    \end{minipage}
\end{equation}

Under right-hand side perturbations ($\Q_b(\lambda)$), only the objective changes and the dual feasible set remains constant: $K_b^D(\lambda)=K^D$. On the other hand, cost vector $\c$ perturbation and matrix perturbation $A$ modify the dual constraints, yielding $K_c^D(\lambda):=\{\y\in\mathbb{R}^m \mid A^t\y\le \c+\lambda\d,\ \y\le 0\} \text{ and } K_A^D(\lambda):=\{\y\in\mathbb{R}^m \mid (A+\lambda D)^t\y\le \c,\ \y\le 0\}.$ In the literature, these problems have already been studied to some extent.\\

\noindent
\textit{Modification of either the objective $\c$ or the right-hand side $\b$.} Perturbations of the objective ($\P_c(\lambda)$) and of the right-hand side ($\P_b(\lambda)$) are closely connected through duality. Indeed, $\b$ appears in the dual objective ($\b^t \y$), while $\c$ appears in the dual constraints ($A^t \y \le \c$). Therefore, varying $\b$ in the primal corresponds to varying the objective of the dual problem ($\Q_b(\lambda)$), while varying $\c$ in the primal corresponds to shifting the right-hand side of the dual constraints ($\Q_c(\lambda)$).
In addition to this correspondence, both families benefit from well-known regularity properties. The optimal value function for both problems is piecewise linear on $\lambda$ (concave for $\P_c(\lambda)$ and convex for $\P_b(\lambda)$). These properties are formalized later in Theorems \ref{th:concave} and \ref{th:convexe} \citep[see, e.g.,][]{berkelaar1997optimal}.
Classical post-optimality analysis exploits the solver's results (optimal bases, reduced costs, dual multipliers, active sets) to compute the intervals of $\lambda$ over which a given basis remains optimal, and to restart re-optimization when a breakpoint is reached \citep[see][]{Gal1994Postoptimal,GeoffrionNauss1977Postoptimal}.\\

\noindent
\textit{Modification on the constraint matrix $A$.} Perturbations of the constraint coefficient matrix ($\P_A(\lambda)$) are more challenging. \citet{Zuidwijk2005LinearParametric} studies this case and derives local (basis-dependent) expressions for the optimal value as well as validity intervals. The method identifies an optimal basis, determines the range of $\lambda$ over which it remains optimal, computes the solution to the range, and performs a new optimization when a change of basis occurs. 
More precisely, fix $\lambda_0$ and let $\mathcal{B}$ be an optimal basis for $\P_A(\lambda_0)$. The same basis remains optimal if the basis submatrix $(A +\lambda D)_\mathcal{B}$ remains nonsingular, the associated basis solution remains feasible, and the reduced costs of the nonbasic variables remain nonnegative. These three conditions determine a validity interval $\Lambda_\mathcal{B}$ around $\lambda_0$, on which the optimal value can be written in the form :
\begin{equation}\label{eq:zuidwijk_rational}
    f_A(\lambda) = \frac{r(\lambda)}{q(\lambda)},
\end{equation}
where, once the basis $\mathcal{B}$ is fixed, $r(\lambda)$ and $q(\lambda)$ are known polynomials. Furthermore, $r(\lambda)$ and $q(\lambda)$ have degrees at most $m$ and $m+1$, respectively. When one of these conditions fails, the corresponding validity interval terminates, and a new optimal basis must be determined. It follows that the optimal value is a piecewise rational function of $\lambda$, each piece corresponding to an interval on which a given basis remains optimal. 

Related ideas have also been explored in process systems applications. \citet{KhalilpourKarimi2014LHS} propose algorithms for left-hand side uncertainty and rely on approximations of the basis inverse using the sensitivity update scheme of \citet{FlavellSalkin1975Approach}. More recently, \citet{MiftariEtAl2024LHSBounding} propose bounding schemes for the optimal solution of $\P_A(\lambda)$ under affine perturbations of the constraint matrix, combining robust optimization and Lagrangian relaxation ideas. This is done in order to construct constant, $\lambda$ dependent, and enveloping bounds. In a complementary direction, \citet{DervalMiftariErnstLouveaux2025_WarmstartA} develop efficient warm-starting strategies to evaluate multiple values of $\lambda$ by exploiting a precomputed basis and structured matrix updates.

Overall, these contributions mainly focus on the optimal objective value changes under perturbations.
The approach here is different.
The goal is to understand the behavior of $S_A(\lambda)$, $S_b(\lambda)$ and $S_c(\lambda)$, which correspond respectively to the set-valued mapping of optimal solutions to problems $\P_A(\lambda)$, $\P_b(\lambda)$ and $\P_c(\lambda)$. In other words, $S_c(\lambda):=\{\x\in K \mid (\c+\lambda \d)^t\x = f_c(\lambda)\},\ S_b(\lambda):= \{\x\in K_b(\lambda) \mid \c^t \x = f_b(\lambda)\} \text{ and } S_A(\lambda):= \{\x\in K_A(\lambda) \mid \c^t \x = f_A(\lambda)\}$.

Even for a fixed $\lambda_0$ of the parameter, describing the set of optimal solutions $S_A(\lambda_0)$, $S_b(\lambda_0)$ or $S_c(\lambda_0)$ is generally not trivial. Indeed, it amounts to identifying the optimal face of a polyhedron, and a complete characterization may require enumerating the vertices of this face, which is exponential in the worst case. Furthermore, an approach based on sampling $\lambda$ may reveal nonlinear behaviors (piecewise regimes, breaks, sudden changes in the optimal face) that would easily be missed or artificially smoothed out by a discrete sampling of $\lambda$ values.  In this paper, the objective is not to approximate $S_A(\lambda), S_b(\lambda)$ and $S_c(\lambda)$ through point exploration. 
The goal is to establish theorems that characterize the properties of these modifications, including continuity conditions, and local properties of convexity and concavity. In addition, representations (bands/envelopes or other structured descriptions) are proposed for each problem.


However, even when a complete description of $S(\lambda)$ is available, it is rarely directly usable by a decision maker. In practice, what matters is not the full geometry of the optimal set, but the value taken by specific decisions or combinations of decisions of interest. The analysis, therefore, focuses on a scalar criterion $\boldsymbol{\varphi}^t \x$, for a given vector $\boldsymbol{\varphi}$, which extracts from $S(\lambda)$ the quantity that is actually relevant to the decision maker. In each case, the evolution of a variable of interest over the optimal solutions $S(\lambda)$ is studied. Let $\boldsymbol{\varphi}^t\x$ denote this quantity of interest. This leads to the definition of
\begin{equation}\label{eq:problemgeneric}
    \begin{aligned}
        g(\lambda) = \min_{\x\in S(\lambda)} \quad & \boldsymbol{\varphi}^t\x.
    \end{aligned}
\end{equation}
$\boldsymbol{\varphi}^t\x$ could, for example, be equal to the value of one variable of interest $(\boldsymbol{\varphi}^t\x=x_i)$, or the mean of several variables ($\boldsymbol{\varphi}^t\x=\tfrac{1}{\left|  V_c\right|}\sum_{i\in V_c} x_i$ where $V_c$ is the set of selected variables), or the differences between two decisions $(\boldsymbol{\varphi}^t\x=x_i-x_j)$, or a weighted sum, a cost, impact, etc. 

Note that the same analysis is performed for the dual variables. The following model is also of interest: 
\begin{equation}\label{eq:problemgenericdual}
    \begin{aligned}
        h(\lambda) = \min_{\y\in T(\lambda)} \quad & \boldsymbol{\varphi}^t\y,
    \end{aligned}
\end{equation}
where $T(\lambda)$ denotes the set-valued mapping of optimal solutions to the dual parametric problem. This therefore corresponds to analyzing the behavior of $T_A(\lambda)$, $T_b(\lambda)$, or $T_c(\lambda)$, which correspond, respectively, to the set-valued mapping of optimal solutions to the dual problems $\Q_A(\lambda)$, $\Q_b(\lambda)$, or $\Q_c(\lambda)$.

\subsection{Our contributions}

This article develops a sensitivity analysis focused on decision-making for linear programs with linear perturbations of the cost vector $\c$, the right-hand side $\b$, and the constraint matrix $A$. We study how optimal decisions evolve with the parameter and how individual variables may vary between optimal solutions. The goal is to no longer analyze the objective, but rather the variables. 
More specifically, the work establishes theorems characterizing the structure of these optimal sets, with a focus on their continuity and local geometry.
We analyze not only sets-valued mapping of optimal solutions but also those of quasi-optimal solutions. When focusing on exact optimal solutions, the selected solutions may be discontinuous as the parameter changes, whereas allowing a small optimality tolerance restores continuity. Since an arbitrarily small tolerance is sufficient, and exact optimality is often obscured in practice by data uncertainty and solver precision, the difference between exact and quasi-optimal solutions is often negligible in practice. Decisions of interest are selected using a given additional criterion $\boldsymbol{\varphi}$. Our main contributions are as follows.

First, we formalize decision-oriented sensitivity analysis in linear programming for perturbations of $\c$, $\b$, and $A$ in both primal and dual form. This approach allows us to understand the evolution of the selected optimal or $\varepsilon$-optimal decisions when the parameter varies. Second, in the case of perturbations of the cost vector $\c$, we demonstrate that $g_c( \lambda)$ may be discontinuous. We prove that introducing a tolerance $\varepsilon>0$ guarantees continuity, and we derive, on the intervals where the basis remains unchanged, a homographic representation of the primal solution. Next, for the dual analysis, we show that $h_c(\lambda)$ remains continuous, including in the exact case $\varepsilon=0$. It admits a finite decomposition into intervals, on each of which it is a continuous convex piecewise-linear function. Then, for perturbations of the right-hand side $\b$, we show that the situation is reversed compared to cost perturbations. In this case, the asymmetry between primal and dual decision sensitivity appears in the opposite direction. Finally, for perturbations of the constraint matrix $A$, we establish that decision-oriented sensitivity becomes significantly more complicated. The matrix-perturbation case is the most challenging of the three perturbations. Table \ref{tab:placeholder} highlights the main properties of the functions in the different perturbation models.

\begin{table}[h]
    \centering
    \renewcommand{\arraystretch}{1.2}
    \begin{tabular}{|c|c|c|c|>{\centering\arraybackslash}m{7cm}|}
        \hline
        \multicolumn{2}{|c|}{Type of Sensitivity} & Function & Section & Main property \\
        \hline
        \multirow{2}{*}{$\c$} & Primal & $g_c(\lambda)$ & \ref{sub:primal_c} & Potential discontinuity; Continuity for $\varepsilon>0$; Piecewise homographic function. \\
        \cline{2-5}
        & Dual & $h_c(\lambda)$ & \ref{sub:dual_c} & Continuous, including for $\varepsilon=0$; Piecewise function by convex piecewise function. \\
        \hline
        \multirow{2}{*}{$\b$} & Primal & $g_b(\lambda)$ & \ref{sub:primal_b} & Potential discontinuity; Continuity for $\varepsilon>0$; Piecewise homographic function. \\
        \cline{2-5}
        & Dual & $h_b(\lambda)$ & \ref{sub:dual_b} &   Continuous, including for $\varepsilon=0$; Piecewise function by convex piecewise function. \\
        \hline
        $A$ & Primal/Dual & $g_A(\lambda), h_A(\lambda)$ & \ref{section:A} & Potential discontinuity, including $\varepsilon \geq 0$; Erratic function. \\
        \hline
    \end{tabular}
    \caption{Summary of the different methods discussed.}
    \label{tab:placeholder}
\end{table}

\subsection{Motivating example}

In this subsection, an example from the energy field is described to illustrate the concepts discussed in the following sections. As the paper progresses, this example is used to illustrate the differences between each type of problem. \\

\begin{example}\label{example_microgrid}
    Consider an electrical microgrid connected to the grid consisting of four elements: \begin{itemize}
        \item A solar photovoltaic (PV);
        \item A battery energy storage system;
        \item An inelastic electricity demand;
        \item A distribution network from which electricity can be imported.
    \end{itemize}
    The objective is to determine the optimal PV and battery capacity, as well as the hourly operating decisions of the microgrid, in order to minimize the total cost of the system over the study horizon.  At each stage, electricity demand must be satisfied. In this context, the primal variables represent design and operational decisions, such as installed photovoltaic capacity, installed battery capacity, and hourly grid imports. The dual variables, on the other hand, represent the marginal values associated with system constraints. For example, the dual variable associated with the power equation can be interpreted as the marginal value of electricity. In this context, the perturbations of $\c$, $\b$, and $A$ can be naturally interpreted as, for example, changes in economic conditions, demand, and technical coefficients, respectively.
\end{example}

\subsection{Paper outline}
The paper is organized as follows. In Section \ref{section:c}, the case where the coefficients of the objective function $\c$ are varied is studied. Then, Section \ref{section:b} looks at perturbations of the RHS $\b$ and Section \ref{section:A} focuses on perturbations of the matrix $A$. Finally, Section \ref{section:conclu} concludes by summarizing the main results and proposing perspectives, while Appendix \ref{appendix:A} provides a summary of the main notation.

\section{Sensitivity, when modifying $\c$}
\label{section:c}

In this section, the case where $\c$ is modified is analyzed. First, the parametric problem is reformulated into a more practical form. Next, the sensitivity properties for the primal problem $\P_c(\lambda)$ and finally for the dual problem $\Q_c(\lambda)$ are derived. This is done by describing how their respective sets of optimal solutions $S_c(\lambda)$ and $T_c(\lambda)$ evolve with the parameter $\lambda$, and by discussing the selection of representative optimal solutions. Let $f_c(\lambda)$ be the value function of the primal optimization problem $\P_c(\lambda)$:

\begin{equation}\label{eq:fc_primal}
    \begin{aligned}
        f_c(\lambda) = \min_{\x}\quad & (\c+\lambda \d)^{t}\x\\
        \text{s.t.}\quad & A\x \le \b\\
        & \x \ge 0,
    \end{aligned}
\end{equation}
where $A\in \mathbb{R}^{m\times n}$ is the constraint matrix, $\b\in \mathbb{R}^m$ the right-hand side, $\c \in \mathbb{R}^n $ the cost vector and $\d \in \mathbb{R}^n$ the vector affecting the objective.
The following theorem is well known \citep[see, for example,][]{berkelaar1997optimal,Bertsimas1997IntroductionTL}.
\begin{theorem}\label{th:concave}
Let $\Lambda\subseteq\mathbb{R}$ be an interval such that, for every $\lambda\in\Lambda$, the parametric problem $\P_c(\lambda)$ is feasible and has a finite optimal value. Then the value function $f_c(\lambda)$ is continuous, concave, and piecewise linear on $\Lambda$.
\end{theorem}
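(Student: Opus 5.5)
The plan is to write $f_c$ as a minimum of finitely many affine functions of $\lambda$, using the vertices of the fixed feasible region $K$ together with a recession-direction argument. Since the constraints do not depend on $\lambda$, we have $K_c(\lambda)=K$ for all $\lambda$. Because $\P_c(\lambda)$ is feasible, $K\neq\emptyset$. Moreover, $K\subseteq\mathbb{R}^n_+$ contains no line, so $K$ has at least one vertex. By the Minkowski--Weyl decomposition,
\[
K=\operatorname{conv}\{\v_1,\dots,\v_p\}+\operatorname{cone}\{\u_1,\dots,\u_q\},
\]
where $\v_1,\dots,\v_p$ are the finitely many vertices of $K$ and $\u_1,\dots,\u_q$ are its extreme rays.

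The first step uses finiteness of the optimum. For a fixed $\lambda\in\Lambda$, $f_c(\lambda)$ is finite exactly when $(\c+\lambda\d)^t\u_j\ge 0$ for every $j$; otherwise moving along $\u_j$ would drive the objective to $-\infty$. Under this condition, the fundamental theorem of linear programming says the minimum is attained at a vertex. Hence
\[
f_c(\lambda)=\min_{i=1,\dots,p}\,(\c^t\v_i+\lambda\,\d^t\v_i)\qquad\text{for all }\lambda\in\Lambda.
\]
Note also that the set of $\lambda$ satisfying all the ray conditions is an intersection of closed half-lines, hence an interval. This is consistent with $\Lambda$ being an interval, although the argument only needs the displayed formula on $\Lambda$.

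The second step reads off the three properties from this formula. The function $f_c$ is a pointwise minimum of finitely many affine functions, so it is concave on $\Lambda$. A finite minimum of continuous functions is continuous, so $f_c$ is continuous. For piecewise linearity, observe that two distinct affine functions $\lambda\mapsto \c^t\v_i+\lambda\d^t\v_i$ cross at most once. There are at most $\binom{p}{2}$ crossing points, and they split $\Lambda$ into finitely many subintervals. On each subinterval the index attaining the minimum can be taken constant, so $f_c$ is affine there.

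The main obstacle is justifying that the vertex formula holds on all of $\Lambda$, and in particular at endpoints of $\Lambda$ where some ray condition is tight. At such a point the minimum is still attained at a vertex, because $f_c(\lambda)$ is assumed finite and the fundamental theorem applies whenever the optimum is finite and $K$ has a vertex. So the only care needed is to invoke that theorem pointwise for each $\lambda\in\Lambda$. An alternative route is LP duality: $f_c(\lambda)=\max\{\b^t\y : A^t\y\le \c+\lambda\d,\ \y\le 0\}$. Concavity also follows directly without vertices, since $f_c$ is a pointwise infimum over $\x\in K$ of the affine functions $\lambda\mapsto(\c+\lambda\d)^t\x$.
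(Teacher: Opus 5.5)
Your proof is correct. Writing $f_c(\lambda)=\min_i(\c^t\v_i+\lambda\,\d^t\v_i)$ over the finitely many vertices of the $\lambda$-independent, pointed region $K$ holds pointwise on $\Lambda$ because each optimum is finite, and it immediately yields continuity, concavity, and finitely many affine pieces. The paper does not prove the theorem itself but cites it as well known (Berkelaar et al.\ and Bertsimas--Tsitsiklis). Your vertex/basic-solution argument is the standard proof behind those references, and it matches the paper's later use of the result in the proof of Theorem~\ref{th:g_cpiece}, where on each interval $f_c(\lambda)=\c^t\x_{\mathcal B}+\lambda\,\d^t\x_{\mathcal B}$ for a fixed optimal basic solution, i.e.\ one of your affine pieces.
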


We will see in the next section that this is not true when focusing on variables rather than the value function.

\subsection{Sensitivity of primal variables}\label{sub:primal_c}
\interp{In this case, the primal variable is a design variable of the microgrid, such as the installed photovoltaic capacity or the installed battery capacity. The perturbation $\lambda \d$ in the cost vector models a change in economic conditions, for example, a variation in the grid electricity price or in the vector of investment costs.}{}

For a given vector of interest $\boldsymbol{\varphi}$, knowing that $f_c(\lambda)$ is the minimum value of $\P_c(\lambda)$,  \eqref{eq:problemgeneric} can  be rewritten as

\begin{equation}\label{eq:g_problem_c}
    \begin{aligned}
        g_c(\lambda) = \min_{\x} \quad & \boldsymbol{\varphi}^t\x\\
        \text{s.t.} \quad & (\c+\lambda \d)^{t} \x \leq f_c(\lambda) \\ & A \x\leq \b \\
         & \x \geq 0.
    \end{aligned}
\end{equation}

\subsubsection*{Continuity }\label{sub:continue}
\begin{observation}
    The value function $g_c(\lambda)$ can be discontinuous.
\end{observation}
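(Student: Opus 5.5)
Since the statement is existential (``can be discontinuous''), the plan is to exhibit a single explicit instance of \eqref{eq:g_problem_c} in which $g_c$ jumps. The mechanism is the one hinted at after Theorem~\ref{th:concave}. At a breakpoint $\lambda_0$ of the concave piecewise linear function $f_c$, the optimal face $S_c(\lambda_0)$ is larger than the optimal faces on either side: it contains an edge of $K$. For $\lambda\neq\lambda_0$ close to $\lambda_0$, $S_c(\lambda)$ shrinks to a single vertex, and that vertex differs on the two sides. Hence $g_c(\lambda)=\boldsymbol{\varphi}^t\x$ evaluated at that vertex, and it takes different constant values to the left and to the right of $\lambda_0$.

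Concretely, I take $n=2$, $m=1$, $A=(1\ 1)$, $\b=1$, so $K$ is the triangle with vertices $(0,0)$, $(1,0)$, $(0,1)$. I choose $\c=(0,0)^t$, $\d=(1,-1)^t$ and $\boldsymbol{\varphi}=(1,0)^t$, so that $\boldsymbol{\varphi}^t\x=x_1$. The objective becomes $\lambda(x_1-x_2)$, and I compute $S_c(\lambda)$ by cases.
\begin{itemize}
    \item For $\lambda>0$, minimizing $x_1-x_2$ over $K$ gives the unique optimum $(0,1)$, so $g_c(\lambda)=0$.
    \item For $\lambda<0$, maximizing $x_1-x_2$ gives the unique optimum $(1,0)$, so $g_c(\lambda)=1$.
    \item At $\lambda=0$, every point of $K$ is optimal, so $g_c(0)=\min_{\x\in K}x_1=0$.
\end{itemize}
Therefore $\lim_{\lambda\to0^-}g_c(\lambda)=1\neq 0=g_c(0)$, and $g_c$ is discontinuous at $0$. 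It is lower semicontinuous but not continuous, which fits the later claim that taking $\varepsilon>0$ restores continuity. The problem is feasible and bounded for every $\lambda$, so the hypotheses of Theorem~\ref{th:concave} hold and $f_c(\lambda)=\min(0,-\lambda)$ is continuous. This contrast is exactly the point of the observation.

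The only step requiring care is to check uniqueness of the optimum on each side, by comparing the objective values at the three vertices: for $\lambda>0$ they are $0$, $\lambda$ and $-\lambda$. A tie would keep the optimal set large and spoil the jump. I would also remark that a maximization criterion $-\boldsymbol{\varphi}$ gives a jump from the other side, so the phenomenon is generic. A small figure of the triangle with the rotating objective direction would make the example transparent, and it could be tied to Example~\ref{example_microgrid} as a price crossing at which two investment options swap optimality.
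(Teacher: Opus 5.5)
Your counterexample is correct and follows the same route as the paper, which likewise exhibits an explicit two-dimensional instance in which, at a critical $\lambda$, the optimal set grows from a single vertex to a larger face so that the $\boldsymbol{\varphi}$-minimizer jumps. Your triangle with $\c=0$ is simpler, and you establish the one-sided limit, whereas the paper only compares the two sample values $\lambda=-7.1$ and $\lambda=-7$. One slip in a side remark: here $f_c(\lambda)=\min(\lambda,-\lambda)=-|\lambda|$, not $\min(0,-\lambda)$, since for $\lambda<0$ the vertex $(1,0)$ gives value $\lambda<0$; this does not affect the discontinuity argument.
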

\begin{example}\label{ex:1}
    Let us consider a simple 2D LP problem : 
    \begin{equation*}
        \begin{aligned}
            f_c(\lambda) = \min_{x_1,x_2}\quad & (2 + \lambda)\ x_1 + (1-2 \lambda)\  x_2 \\
            \text{s.t.}\quad
            & \left\{
            \begin{aligned}
                x_1 - 3x_2 &\le 0\\
                2x_1 - x_2 &\le 5\\
                -x_1 + 3x_2 &\le 3\\
                -2x_1 + x_2 &\le 0\\
                x_1,x_2 &\ge 0.
            \end{aligned}
            \right.
        \end{aligned}
    \end{equation*}    For $\boldsymbol{\varphi} = \begin{pmatrix}  1&0 \end{pmatrix}^t$, $g_c(\lambda)$ can be written as : 
    \begin{equation*}
        \begin{aligned}
            g_c(\lambda) &= \min_{x_1,x_2}\quad x_1 \\
            \text{s.t.}\quad
            & \left\{
            \begin{aligned}
                (2 + \lambda)\ x_1 + &(1-2 \lambda)\  x_2 \leq f_c(\lambda) \\
                x_1 - 3x_2 &\le 0\\
                2x_1 - x_2 &\le 5\\
                -x_1 + 3x_2 &\le 3\\
                -2x_1 + x_2 &\le 0\\
                x_1,x_2 &\ge 0.
            \end{aligned}
            \right.
        \end{aligned}
    \end{equation*}
\end{example}

The problem is represented in Figure \ref{fig:jump_gc}. Two values of $\lambda$ are considered. For $\lambda_1 = -7.1$, the optimal solution of $\P(\lambda)$ is $\x^*=(3\quad1)$ and the value of $g_c(-7.1)=3$. For $\lambda_2=-7$, the feasible region is represented in Figure \ref{figureb}. The optimal set $S_c(-7)$ is an edge. $\x^*=(3\ \ 1)$ is still an optimal solution but the optimal solution minimizing $x_1$ is now $\x^*=(0\ \ 0)$. $g_c(-7)$ is therefore 0.


\begin{figure}[h]
    \centering
    \begin{subfigure}[t]{0.32\linewidth}
        \centering
        \includegraphics[scale=0.345]{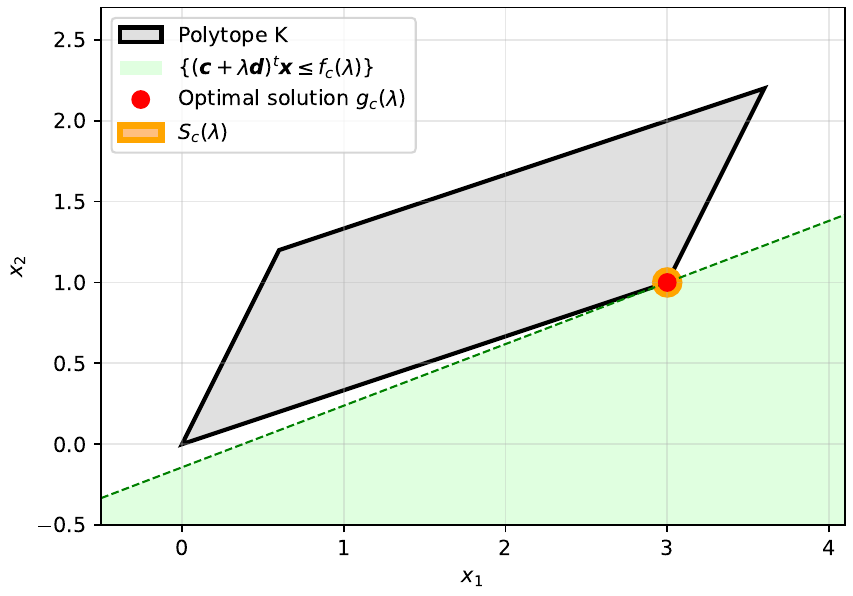}
        \caption{$\lambda_1=-7.1$ : the optimum is a single vertex, $g_c(-7.1)=3.$}
    \end{subfigure}\hfill
    \begin{subfigure}[t]{0.32\linewidth}
        \centering
        \includegraphics[scale=0.345]{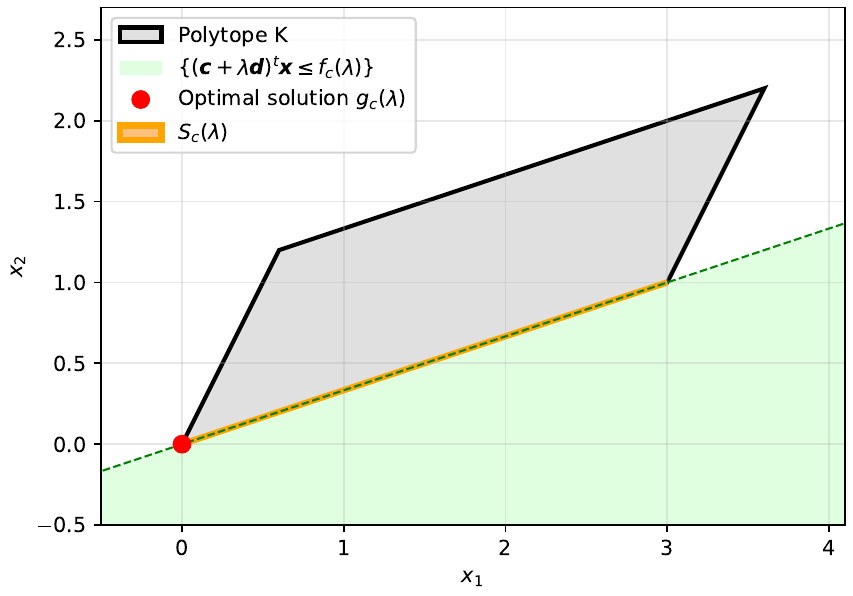}
        \caption{$\lambda_2 = -7$: the optimal set is an edge, $g_c(-7) = 0.$}
        \label{figureb}
    \end{subfigure}\hfill
    \begin{subfigure}[t]{0.32\linewidth}
        \centering
        \includegraphics[scale=0.32]{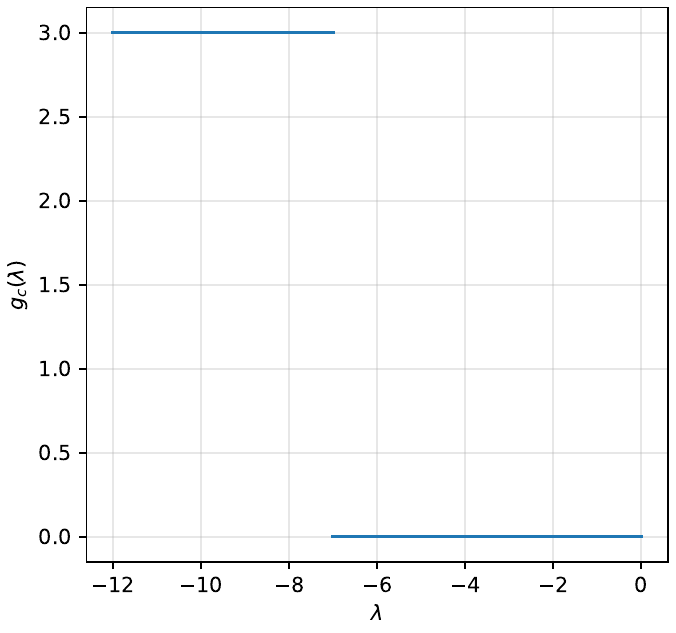}
        \caption{Representation of $g_c(\lambda)$.}
    \end{subfigure}
    \caption{Example of discontinuity for Example \ref{ex:1}.}
    \label{fig:jump_gc}
\end{figure}

This phenomenon can be observed from a geometrical perspective. As $\lambda$ varies, the hyperplane $\{\x \geq 0 \mid (\c+\lambda \d)^{t} \x \leq f_c(\lambda) \}$ rotates around the polytope $K$, shifting the face on which the optimum is attained. In general, this optimal face can be of arbitrary dimension $k\geq0$. However, nothing guarantees that this dimension remains constant in $\lambda$. Indeed, at a critical value $\lambda_2$, the optimal face may abruptly change dimension. This is precisely what occurs in the example above. For $\lambda_1 = -7.1$, the optimum is attained at a single vertex, whereas at $\lambda_2 = -7$, the optimal set $S_c(-7)$ becomes an entire edge. This transition from a vertex to an edge causes the minimizer of $x_1$ to jump abruptly from $(3\ \ 1)$ to $(0\ \ 0)$, producing the observed discontinuity in $g_c(\lambda)$.

However, if a tolerance on the optimality is considered, continuity is recovered. Let $\varepsilon>0$, and let \eqref{eq:g_problem_c} slightly modify into:
\begin{equation}\label{eq:g_problem_c_epsilon}
    \begin{aligned}
        g_c^\varepsilon(\lambda) = \min_{\x} \quad & \boldsymbol{\varphi}^t\x\\
        \text{s.t.}\quad & (\c+\lambda \d)^{t} \x \leq f_c(\lambda) + \varepsilon \\ & A \x \leq \b \\
         & \x \geq 0.
    \end{aligned}
\end{equation}
Figure \ref{fig:discontinuity} shows the effect of introducing $\varepsilon > 0$ on the continuity of $g_c^\varepsilon(\lambda)$. When $\varepsilon = 0$, the set of solutions can exhibit abrupt changes as $\lambda$ varies, leading to discontinuities. However, when $\varepsilon > 0$, these abrupt changes are smoothed out, resulting in a continuous function.

\begin{figure}[H]
    \centering    \includegraphics[width=0.6\textwidth]{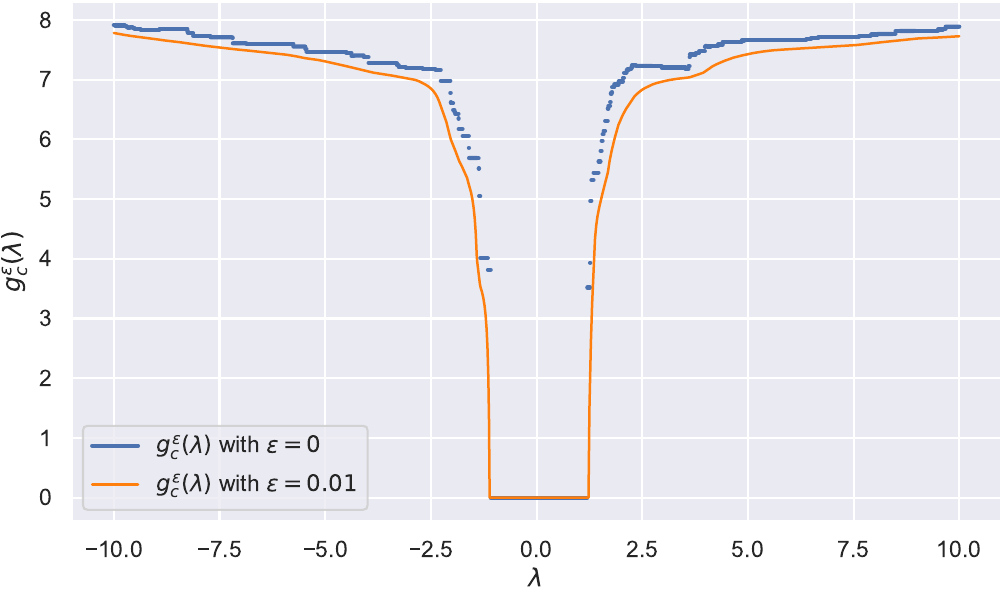}
    \caption{Representation of the function $g^\varepsilon_c(\lambda)$ for $\varepsilon = 0$ (in blue) and $\varepsilon > 0$ (in orange). The introduction of $\varepsilon$ smooths out discontinuities for the function as $\lambda$ varies. Example from a model with $1000$ variables and $600$ randomly selected constraints.}
    \label{fig:discontinuity}
\end{figure}

The main result of this subsection is the following theorem.

\begin{theorem}\label{Berge_maximum}
    If $\varepsilon>0$, the value function $g^\varepsilon_c(\lambda)$ is continuous at every $\lambda \in \Lambda$ .
\end{theorem}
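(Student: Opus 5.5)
The plan is to apply Berge's maximum theorem to the parametric problem \eqref{eq:g_problem_c_epsilon}, which the theorem's label already suggests. Write the feasible set of \eqref{eq:g_problem_c_epsilon} as the set-valued map
\[
\Phi^\varepsilon(\lambda) := \{\x \in K \mid (\c+\lambda\d)^t\x \le f_c(\lambda)+\varepsilon\}.
\]
The objective $\boldsymbol{\varphi}^t\x$ does not depend on $\lambda$ and is continuous. Berge's theorem then says that $g^\varepsilon_c$ is continuous at $\lambda_0$ as long as $\Phi^\varepsilon$ is nonempty-valued, compact-valued (or at least locally bounded), upper hemicontinuous and lower hemicontinuous at $\lambda_0$. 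For compactness, I will assume, as is implicit in Example~\ref{ex:1} and in the figures, that $K$ is a polytope. If $K$ is unbounded, one would instead need $g^\varepsilon_c$ finite and restrict to a large enough box around the relevant minimizers. Nonemptiness is immediate because $S_c(\lambda)\subseteq\Phi^\varepsilon(\lambda)$, and this holds since $f_c(\lambda)$ is finite on $\Lambda$.

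For upper hemicontinuity, the graph of $\Phi^\varepsilon$ is
\[
\{(\lambda,\x) \mid \x\in K,\ (\c+\lambda\d)^t\x - f_c(\lambda) \le \varepsilon\}.
\]
By Theorem~\ref{th:concave}, $f_c$ is continuous on $\Lambda$, so the map $(\lambda,\x)\mapsto(\c+\lambda\d)^t\x - f_c(\lambda)$ is continuous and the graph is closed. A closed graph with values in the fixed compact set $K$ gives upper hemicontinuity. This step works equally well for $\varepsilon=0$.

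The main obstacle is lower hemicontinuity, and this is exactly where $\varepsilon>0$ matters: the example with $\varepsilon=0$ fails precisely here. The plan is to show that the strict-inequality set
\[
\Phi^\varepsilon_<(\lambda_0) := \{\x\in K \mid (\c+\lambda_0\d)^t\x < f_c(\lambda_0)+\varepsilon\}
\]
is dense in $\Phi^\varepsilon(\lambda_0)$. Take $\x\in\Phi^\varepsilon(\lambda_0)$ and any optimal $\x^*\in S_c(\lambda_0)$. For $t\in(0,1]$, the convex combination $(1-t)\x+t\x^*$ lies in $K$. Its cost is at most $f_c(\lambda_0)+(1-t)\varepsilon$, which is strictly below $f_c(\lambda_0)+\varepsilon$ because $\varepsilon>0$, and the combination tends to $\x$ as $t\to0$.

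Now take a sequence $\lambda_k\to\lambda_0$ in $\Lambda$ and a point $\x$ of $\Phi^\varepsilon_<(\lambda_0)$ with slack $\delta>0$. Continuity of $\lambda\mapsto(\c+\lambda\d)^t\x - f_c(\lambda)$, again from Theorem~\ref{th:concave}, shows that $\x\in\Phi^\varepsilon(\lambda_k)$ for all large $k$. A diagonal argument then combines this with the density statement: every point of $\Phi^\varepsilon(\lambda_0)$ is a limit of points $\x_k\in\Phi^\varepsilon(\lambda_k)$, which is lower hemicontinuity.

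Berge's theorem then yields continuity of $g^\varepsilon_c$ at every $\lambda_0\in\Lambda$. At an endpoint of $\Lambda$, this is one-sided continuity, which follows from the same argument applied to sequences inside $\Lambda$.
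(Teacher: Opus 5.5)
Your proposal is correct and follows essentially the paper's route: Berge's maximum theorem applied to the map $S^\varepsilon_c(\lambda)$, with lower semicontinuity obtained exactly as in Lemma~\ref{T:LSC}, by mixing a point where $(\c+\lambda\d)^t\x = f_c(\lambda)+\varepsilon$ with an exact optimum to create strict slack and then invoking continuity of $f_c$. The one difference is your upper semicontinuity step, where your closed-graph-in-compact-$K$ argument is more direct than the paper's appeal to part (ii) of Theorem~\ref{th:bergemaximum}, which as invoked there concerns the exact optimal-set map rather than the $\varepsilon$-relaxed one.
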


The proof is given later in this subsection (on page \pageref{proof:berge}), after recalling the required notions on set-valued mappings and Berge's maximum theorem needed for the proof of Theorem \ref{Berge_maximum}.

According to Theorem \ref{Berge_maximum}, continuity is established once a solution is accepted whose objective value is arbitrarily close to the exact optimum $f_c(\lambda)$. The result is true for any strictly positive $\varepsilon$, however small it may be.  From an application perspective, the introduction of this tolerance is entirely natural.  Indeed, data (measurements, forecasts, parameters) and the solution provided by the numerical solver are affected by inevitable uncertainties and errors (measurement noise, rounding, optimality tolerances). Consequently, small optimal gaps are generally indistinguishable from experimental noise and numerical error. The requirement for "exact" optimality, therefore, often has no operational significance. 

In order to prove Theorem \ref{Berge_maximum}, some concepts from set-valued mapping are needed. See \citet{Berge1963} for more details. Let $X$ and $Y$ be topological spaces and let $\Gamma: X \rightrightarrows Y$ be a set-valued mapping, i.e., for each $\x\in X$, $\Gamma(\x)\subseteq Y$. Let $x_0\in X$.

\begin{definition}\label{def:USC}
    $\Gamma$ is \textit{upper semi-continuous (USC) at} $x_0$ if for each open set $V$ containing $\Gamma (x_0)$ there exists a neighborhood $U(x_0)$ such that $$x \in U(x_0)\quad \Rightarrow \quad \Gamma (x) \subset V.$$
\end{definition}

\begin{definition}\label{def:LSC}
    $\Gamma$ is \textit{lower semi-continuous (LSC) at} $x_0$ if for each open set $V$ that intersects $\Gamma (x_0)$ there is a neighborhood $U(x_0)$ such that $$x \in U(x_0) \quad \Rightarrow \quad \Gamma (x) \cap V \neq \varnothing.$$
\end{definition}

\begin{definition}
    The mapping $\Gamma$ is \textit{continuous at} $x_0$ if it is both lower and upper semi-continuous at $x_0$.
\end{definition}

\begin{theorem}[Berge's maximum theorem \citealp{Berge1963}]\label{th:bergemaximum}
    If $\psi:Y\rightarrow\mathbb{R}$ is a continuous numerical function in $Y$ and $\Gamma$ is a continuous mapping of $X$ into $Y$ such that, for each $\x$, $\Gamma (\x)\neq\varnothing$ :
    \begin{enumerate}[label=(\roman*)]
        \item The numerical function $M$ defined by $M(\x)=\max\{\psi(\y)\mid \y\in \Gamma (\x)\}$ is continuous in $X$.
        \item The mapping $\Psi$ defined by $ \Psi(x)=\{\,\y\mid \y\in \Gamma (\x),\ \psi(\y)=M(\x)\,\} $ is an upper semicontinuous mapping of $X$ into $Y$.
    \end{enumerate}
\end{theorem}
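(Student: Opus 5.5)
The plan is to prove both parts directly from Definitions \ref{def:USC} and \ref{def:LSC}. Throughout, I use the standing convention of \citet{Berge1963} that an upper semicontinuous mapping has compact values $\Gamma(x)$ in a Hausdorff space $Y$. In our application these are the nonempty compact feasible polytopes. Compactness guarantees that the maximum defining $M(x)$ is attained for every $x$, so that $M$ is well defined and $\Psi(x)\neq\varnothing$. I will first prove part (i) by splitting continuity of $M$ at a point $x_0$ into upper and lower semicontinuity. For (ii), I will write $\Psi(x)=\{y\in\Gamma(x)\mid \psi(y)\ge M(x)\}$.

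\emph{Upper semicontinuity of $M$.} This step uses only the upper semicontinuity of $\Gamma$. Fix $a>M(x_0)$ and set $V=\{y\mid \psi(y)<a\}$. The set $V$ is open because $\psi$ is continuous. It contains $\Gamma(x_0)$ because every $y\in\Gamma(x_0)$ satisfies $\psi(y)\le M(x_0)<a$. Definition \ref{def:USC} then gives a neighborhood $U(x_0)$ on which $\Gamma(x)\subset V$. Since the maximum over the compact set $\Gamma(x)$ is attained, it follows that $M(x)<a$ on $U(x_0)$.

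\emph{Lower semicontinuity of $M$.} This step uses only the lower semicontinuity of $\Gamma$. Pick $y_0\in\Gamma(x_0)$ with $\psi(y_0)=M(x_0)$, fix $b<M(x_0)$, and set $V=\{y\mid\psi(y)>b\}$. The set $V$ is open and meets $\Gamma(x_0)$ at $y_0$. Definition \ref{def:LSC} yields a neighborhood on which $\Gamma(x)\cap V\neq\varnothing$, hence $M(x)>b$ there. Together the two steps prove (i).

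\emph{Part (ii).} Let $W$ be an open set containing $\Psi(x_0)$.
\begin{itemize}
\item If $\Gamma(x_0)\subset W$, upper semicontinuity of $\Gamma$ gives a neighborhood on which $\Psi(x)\subset\Gamma(x)\subset W$, and we are done.
\item Otherwise $C=\Gamma(x_0)\setminus W$ is a nonempty closed subset of a compact set, hence compact. Every $y\in C$ lies outside $\Psi(x_0)$, so $\psi(y)<M(x_0)$ on $C$. By compactness, $\max_C\psi=M(x_0)-\delta$ for some $\delta>0$.
\item Define the open set $V=W\cup\{y\mid \psi(y)<M(x_0)-\delta/2\}$. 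It contains $\Gamma(x_0)$: points in $W$ are covered by $W$, and points in $C$ satisfy $\psi\le M(x_0)-\delta$.
\item By upper semicontinuity of $\Gamma$ and by part (i), there is a neighborhood of $x_0$ on which both $\Gamma(x)\subset V$ and $M(x)>M(x_0)-\delta/2$ hold.
\item For such $x$, any maximizer $y\in\Psi(x)$ satisfies $\psi(y)=M(x)>M(x_0)-\delta/2$. So $y$ is not in the second set forming $V$, and therefore $y\in W$. This shows $\Psi(x)\subset W$, which is exactly Definition \ref{def:USC}.
\end{itemize}

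The main obstacle is this last part, specifically extracting the uniform gap $\delta$. Without compactness of $\Gamma(x_0)$, points outside $W$ could have values of $\psi$ approaching $M(x_0)$, and the argument would break down. I will therefore state the compactness convention explicitly at the start of the proof. I will also note that it holds in the intended application, because the feasible set $K$ is a nonempty polytope.
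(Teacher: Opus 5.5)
Your proof is correct. In (i), each half of the continuity of $M$ follows from the matching half of the continuity of $\Gamma$. In (ii), $\psi$ stays a uniform gap $\delta>0$ below $M(x_0)$ on the compact set $\Gamma(x_0)\setminus W$, and together with lower semicontinuity of $M$ this forces every nearby maximizer into $W$.

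The paper gives no proof of its own to compare against; it simply imports the theorem from \citet{Berge1963}. A common textbook route for (ii) is packaged differently. Once (i) is known, it notes that the set $\{(x,y)\mid \psi(y)=M(x)\}$ is closed. It then applies a general lemma: the intersection of a closed-graph mapping with a compact-valued u.s.c.\ mapping is u.s.c. Your $\delta$-gap argument is that lemma unpacked for this special case. This keeps the proof self-contained and shows exactly where compactness enters.

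That last point is more than a convention. The paper's Definition~\ref{def:USC} drops Berge's requirement that u.s.c.\ mappings be compact-valued, and without it (ii) fails even when every maximum is attained. Take $X=[0,\tfrac12]$ and $Y=\mathbb{R}$, with $\Gamma(0)=[0,\infty)$ and $\Gamma(x)=[2x,1/x]$ for $x>0$; this $\Gamma$ is continuous in the sense of the paper's definitions. Let $\psi(y)=\max(1-|y|,0)$ plus, for each integer $k\ge 2$, a tent of height $1-1/k$ and half-width $\tfrac14$ centred at $k$. Then $\Psi(0)=\{0\}$ but $\Psi(1/n)=\{n\}$, so $\Psi$ is not u.s.c.\ at $0$. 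This is exactly the failure mode you describe: points of $\Gamma(0)\setminus W$ whose $\psi$-values tend to $M(0)$.

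So stating the hypothesis explicitly is the right call. It does hold where the paper uses the theorem: for $K\times\{\lambda\}$ in Lemma~\ref{T:USC}, and for the closed subsets $S_c^\varepsilon(\lambda)$ of the polytope $K$ in the proof of Theorem~\ref{Berge_maximum}.

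Two small touches. Under Berge's convention, add one line noting that $\Psi(x)=\Gamma(x)\cap\psi^{-1}(M(x))$ is a nonempty closed subset of the compact set $\Gamma(x)$, hence compact. Also, the Hausdorff assumption you state is never actually used.
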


Once Theorem \ref{th:bergemaximum} and these definitions are in place, to prove Theorem \ref{Berge_maximum}, it must be shown that the set-valued mapping of optimal solutions $S_c^\varepsilon(\lambda)$ is upper semi-continuous and lower semi-continuous, which is done in Lemmas \ref{T:USC} and \ref{T:LSC}. This will allow us to establish the continuity of $S_c^\varepsilon(\lambda)$ (see Theorem \ref{T:P}), which follows from the proof of Theorem \ref{Berge_maximum}.

\begin{lemma}\label{T:USC}
    For every $\varepsilon\ge 0$, the set-valued mapping
    $S_c^\varepsilon(\lambda)=\{\x\in K\mid (\c+\lambda \d)^t x \le f_c(\lambda)+\varepsilon\}$ is upper semi-continuous at every $\lambda \in \Lambda$.
\end{lemma}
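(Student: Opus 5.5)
The plan is a closed-graph plus compactness argument, by contradiction along sequences. Throughout, I take $K$ to be a nonempty polytope, i.e.\ compact, as the paper implicitly does when it describes the hyperplane rotating ``around the polytope $K$''. I take $\Lambda$ to be the interval of Theorem \ref{th:concave}, on which $f_c$ is finite, so that $S_c^\varepsilon(\lambda)\neq\varnothing$ for every $\lambda\in\Lambda$. Neighborhoods of $\lambda_0$ are understood relative to $\Lambda$.

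The first step is to show that the graph
\[
\operatorname{gr} S_c^\varepsilon=\{(\lambda,\x)\in\Lambda\times K \mid (\c+\lambda\d)^t\x - f_c(\lambda)\le\varepsilon\}
\]
is closed in $\Lambda\times\mathbb{R}^n$. The map $(\lambda,\x)\mapsto(\c+\lambda\d)^t\x$ is continuous, being bilinear plus linear. The map $\lambda\mapsto f_c(\lambda)$ is continuous on $\Lambda$ by Theorem \ref{th:concave}. Hence $\phi(\lambda,\x):=(\c+\lambda\d)^t\x-f_c(\lambda)$ is continuous. Since $K$ is closed, the graph is the intersection of the closed set $\Lambda\times K$ with the closed sublevel set $\{\phi\le\varepsilon\}$, and is therefore closed. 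This argument works for every $\varepsilon\ge0$, including $\varepsilon=0$, which matches the statement of the lemma.

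The second step uses compactness. Suppose $S_c^\varepsilon$ is not USC at some $\lambda_0\in\Lambda$. Then there exist an open set $V\supseteq S_c^\varepsilon(\lambda_0)$, a sequence $\lambda_k\to\lambda_0$ in $\Lambda$, and points $\x_k\in S_c^\varepsilon(\lambda_k)\setminus V$. All $\x_k$ lie in the compact set $K$, so after extracting a subsequence we may assume $\x_k\to\bar\x\in K$. By the closed graph from the first step, $(\lambda_0,\bar\x)\in\operatorname{gr}S_c^\varepsilon$, that is, $\bar\x\in S_c^\varepsilon(\lambda_0)\subseteq V$. On the other hand, each $\x_k$ lies in the closed set $\mathbb{R}^n\setminus V$, so the limit $\bar\x$ lies there as well. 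This contradicts $\bar\x\in V$, and proves USC at $\lambda_0$.

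The main obstacle is the boundedness assumption. The closed-graph step is routine, but closed graph implies USC only when the mapping is locally bounded, i.e.\ when its values stay in a common compact set near $\lambda_0$. If $K$ were unbounded, then for $\varepsilon=0$ the optimal set could pick up a recession direction $\vct{r}$ with $(\c+\lambda_0\d)^t\vct{r}=0$ exactly at $\lambda_0$. An open $V$ shrinking suitably at infinity could then fail the definition. I would therefore state explicitly that $K$ is bounded, as in the paper's setting. If generality were needed, I would try to restrict to a bounded ball containing all the relevant optimal faces for $\lambda$ near $\lambda_0$, using the finitely many vertices and extreme rays of $K$.
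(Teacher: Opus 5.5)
Your proof is correct under the standing assumption that $K$ is a nonempty polytope, which the paper's proof also uses. It takes a different route from the paper.

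\textbf{Comparison with the paper's route.} The paper settles the lemma in one line by invoking part (ii) of Berge's maximum theorem, with $\Gamma(\lambda)=K\times\{\lambda\}$ and $\psi(\x,\lambda)=(\c+\lambda\d)^t\x$. Taken literally, that yields upper semicontinuity of the set of \emph{exact} optimizers, i.e.\ of $S_c^0$. The paper does not explain how the tolerance $\varepsilon>0$ is covered. To do so, one would have to recast $S_c^\varepsilon(\lambda)$ as an argmax, for instance of $\min\{0,\ f_c(\lambda)+\varepsilon-(\c+\lambda\d)^t\x\}$ over $K$. Your closed-graph plus sequential-compactness argument essentially reproves the relevant half of Berge's theorem for this particular mapping. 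In exchange, it treats every $\varepsilon\ge 0$ uniformly and makes explicit exactly what is used: continuity of $f_c$ (Theorem~\ref{th:concave}) and compactness of $K$. It does not even need the values to be nonempty.

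\textbf{Your remark on unbounded $K$.} The mechanism you describe points the wrong way.
\begin{itemize}
\item At $\varepsilon=0$, unboundedness does no harm. The optimal sets are faces of $K$, and there are finitely many faces. Any face that is optimal along a sequence $\lambda_k\to\lambda_0$ is also optimal at $\lambda_0$, by passing to the limit in the optimality inequalities. Hence $S_c^0(\lambda)\subseteq S_c^0(\lambda_0)$ for all $\lambda$ near $\lambda_0$. Picking up a recession direction at $\lambda_0$ only enlarges $S_c^0(\lambda_0)$, which makes upper semicontinuity at $\lambda_0$ easier, not harder.
\item It is for $\varepsilon>0$ that compactness is indispensable. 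Take $K=\mathbb{R}^2_+$ and $\c=\d=(1,0)^t$. Then $f_c\equiv 0$ near $0$ and $S_c^\varepsilon(\lambda)=[0,\varepsilon/(1+\lambda)]\times[0,\infty)$. For every $\lambda\in(-1,0)$, this set escapes the open set $\{x_1<\varepsilon+e^{-x_2}\}$, which contains $S_c^\varepsilon(0)$.
\end{itemize}
So your instinct to state boundedness explicitly is right, but the counterexample lives at $\varepsilon>0$, not at $\varepsilon=0$.
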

\begin{proof}
    Apply Theorem \ref{th:bergemaximum} with $X=\Lambda$, $Y=K\times\Lambda$ and define $\Gamma(\lambda) = K \times \{\lambda\}$. Since $K$ is a nonempty polytope, hence $\Gamma(\lambda)$ is nonempty for every $\lambda \in \Lambda$, and $\Gamma$ is continuous. Adding the fact that $\psi(\x,\lambda) = (\c+\lambda\d)^t\x$ is a continuous function in $Y$, the second part of Berge's maximum theorem can be applied. 
\end{proof}

\begin{observation}   
    It can also be seen that when $\varepsilon = 0$, the set-valued mapping $S^0_c(\lambda)$ is upper semi-continuous.
\end{observation}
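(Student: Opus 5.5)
The plan is to identify $S^0_c(\lambda)$ with the argmax mapping $\Psi$ of Theorem~\ref{th:bergemaximum}, so that its part (ii) gives upper semi-continuity directly. For $\varepsilon=0$ the mapping is $S^0_c(\lambda)=\{\x\in K \mid (\c+\lambda\d)^t\x\le f_c(\lambda)\}$. This is exactly the set of minimizers of $(\c+\lambda\d)^t\x$ over $K$, which is the set of maximizers of $\psi(\x,\lambda):=-(\c+\lambda\d)^t\x$.

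I would take the same setting as in the proof of Lemma~\ref{T:USC}: $X=\Lambda$, $Y=K\times\Lambda$ and $\Gamma(\lambda)=K\times\{\lambda\}$. The steps are as follows.
\begin{enumerate}[label=(\alph*)]
\item $\Gamma$ is nonempty-valued and continuous. Its first component is constant and equal to the nonempty compact polytope $K$, and its second component is the continuous single-valued map $\lambda\mapsto\lambda$.
\item $\psi$ is continuous on $Y$, because it is bilinear in $(\x,\lambda)$.
\item By Theorem~\ref{th:bergemaximum}(i), $M(\lambda)=-f_c(\lambda)$ is continuous. This agrees with Theorem~\ref{th:concave}.
\item By Theorem~\ref{th:bergemaximum}(ii), $\Psi(\lambda)=\{(\x,\lambda)\mid \x\in K,\ -(\c+\lambda\d)^t\x=-f_c(\lambda)\}$ is upper semi-continuous.
\item Projecting onto the $\x$-component gives $S^0_c(\lambda)$. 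Projection preserves USC: if $V\supseteq S^0_c(\lambda_0)$ is open, then $V\times\Lambda$ is open and contains $\Psi(\lambda_0)$. Hence $\Psi(\lambda)\subset V\times\Lambda$ near $\lambda_0$, that is, $S^0_c(\lambda)\subset V$.
\end{enumerate}

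As a fallback, or a cross-check, I would use a closed-graph argument. The graph $\{(\lambda,\x)\mid \x\in K,\ (\c+\lambda\d)^t\x\le f_c(\lambda)\}$ is closed, because both sides of the inequality are continuous in $(\lambda,\x)$, using the continuity of $f_c$ from Theorem~\ref{th:concave}. A closed-graph mapping whose values lie in a fixed compact set $K$ is USC. This follows by contradiction: suppose $\lambda_k\to\lambda_0$ and $\x_k\in S^0_c(\lambda_k)\setminus V$. Extract a convergent subsequence $\x_k\to\bar\x$. Then $\bar\x\in S^0_c(\lambda_0)$ by closedness, yet $\bar\x\notin V$ because $V$ is open, which is a contradiction.

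The only delicate point is the compactness of $K$. Both Berge's theorem and the closed-graph argument rely on it, and the paper assumes $K$ is a nonempty polytope in the proof of Lemma~\ref{T:USC}. If $K$ were unbounded, I would instead need to restrict to a compact set containing the optimal faces for $\lambda$ near $\lambda_0$. I would not pursue that here, since the polytope assumption is already in force.
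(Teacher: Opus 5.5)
Your proposal is correct and takes essentially the paper's route: the paper justifies this observation through the proof of Lemma~\ref{T:USC}, which applies part (ii) of Berge's maximum theorem with $X=\Lambda$, $Y=K\times\Lambda$ and $\Gamma(\lambda)=K\times\{\lambda\}$, exactly as you do. Your version is somewhat more careful on points the paper leaves implicit: you take $\psi=-(\c+\lambda\d)^t\x$ so that Berge's argmax really is the set of minimizers, you spell out the projection from $K\times\Lambda$ back to $K$, and your closed-graph cross-check is an equally valid alternative argument.
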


Figure \ref{fig:uppersemicontinuity} highlights that, for $\lambda'$,  small perturbations of $\lambda$, the set $S^0_c(\lambda')$ remains contained in the open set $V$, even when the optimal face changes.


\begin{lemma}\label{T:LSC}
    The set-valued mapping $S_c^\varepsilon(\lambda)=\{\x\in K \mid (\c+\lambda \d)^t \x \le f_c(\lambda)+\varepsilon\}$ is lower semi-continuous at every $\lambda \in \Lambda$ if $\varepsilon > 0$. 
\end{lemma}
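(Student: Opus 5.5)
To prove Lemma \ref{T:LSC}, I will use the sequential characterization of lower semi-continuity. Since $K$ is a polytope and $\Lambda\subseteq\mathbb{R}$, it suffices to show the following. For every $\lambda_0\in\Lambda$, every $\bar\x\in S_c^\varepsilon(\lambda_0)$ and every sequence $\lambda_k\to\lambda_0$ in $\Lambda$, there exist points $\x_k\in S_c^\varepsilon(\lambda_k)$ with $\x_k\to\bar\x$. If an open set $V$ meets $S_c^\varepsilon(\lambda_0)$ at some $\bar\x$, this property forces $S_c^\varepsilon(\lambda)\cap V\neq\varnothing$ for $\lambda$ near $\lambda_0$; otherwise a sequence violating it would contradict the existence of such $\x_k$. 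The two ingredients are the continuity of $f_c$ from Theorem \ref{th:concave} and the slack $\varepsilon>0$.

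The key step is to introduce, for fixed $\lambda_0$, a point $\x^0\in S_c^0(\lambda_0)$, i.e.\ an exact optimizer, which exists since $f_c(\lambda_0)$ is finite and attained on the polytope $K$. It satisfies $(\c+\lambda_0\d)^t\x^0=f_c(\lambda_0)<f_c(\lambda_0)+\varepsilon$. I then take the convex combinations $\x_t=(1-t)\bar\x+t\x^0\in K$ for $t\in[0,1]$. By linearity,
\[
(\c+\lambda_0\d)^t\x_t\le f_c(\lambda_0)+(1-t)\varepsilon .
\]
So for every $t>0$ the point $\x_t$ lies strictly inside the sublevel set at $\lambda_0$, with margin $t\varepsilon$.

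Next, I use that $\lambda\mapsto(\c+\lambda\d)^t\x_t-f_c(\lambda)$ is continuous. The first term is affine in $\lambda$, and $f_c$ is continuous by Theorem \ref{th:concave}. Hence for each fixed $t>0$ there is $\delta(t)>0$ such that $\x_t\in S_c^\varepsilon(\lambda)$ whenever $|\lambda-\lambda_0|<\delta(t)$. Given an open $V\ni\bar\x$, I choose $t>0$ small enough that $\x_t\in V$, since $\x_t\to\bar\x$ as $t\to0$. This gives $S_c^\varepsilon(\lambda)\cap V\neq\varnothing$ on the neighbourhood $|\lambda-\lambda_0|<\delta(t)$, which proves lower semi-continuity directly, without passing through sequences.

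The main obstacle is the boundary case $\bar\x$ with $(\c+\lambda_0\d)^t\bar\x=f_c(\lambda_0)+\varepsilon$. There the constraint is tight, and $\bar\x$ itself may leave the sublevel set for nearby $\lambda$. The convex-combination trick with the exact optimizer $\x^0$ is exactly what handles it, and it is where $\varepsilon>0$ is essential. When $\varepsilon=0$, the margin $t\varepsilon$ vanishes, so no strict interior is available, and Example \ref{ex:1} shows lower semi-continuity genuinely fails. A minor point to check is that at endpoints of $\Lambda$ the neighbourhood is taken relative to $\Lambda$, which the argument handles automatically.
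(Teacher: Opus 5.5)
Your proposal is correct and follows essentially the same route as the paper: both take a convex combination of the given point with an exact optimizer of $\P_c(\lambda_0)$, which makes the sublevel constraint strict with margin $t\varepsilon$. Both then use the continuity of $f_c$ (Theorem \ref{th:concave}) to keep that point feasible for nearby $\lambda$ while it stays in $V$. The only differences are cosmetic: the paper splits into a strict case and a tight case, whereas you apply the convex-combination step uniformly, and your initial sequential reformulation is not actually needed, as you yourself note.
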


\begin{proof}
    To prove that $S_c^\varepsilon(\lambda)$ is lower semi-continuous at $\lambda$, it must be shown that it satisfies the following. Let V be an open set with $S^\varepsilon_c(\lambda)\cap V \neq \varnothing$, then there exists a neighborhood $U$ of $\lambda$ such that $S^\varepsilon_c(\lambda')\cap V \neq \varnothing$, for all $ \lambda'\in U$.\\
    Let us define $m(\x,\lambda)\equiv(\c+\lambda \d)^{t}\x$ and let $x_{0}\in S^\varepsilon_c(\lambda)\ \cap\ V$, so that 
    $$m(x_{0},\lambda) \le f_c(\lambda)+\varepsilon.$$
    Two cases arise : 
    \[
    \text{(i) } m(x_{0},\lambda) < f_c(\lambda) + \varepsilon, 
    \qquad
    \text{(ii) } m(x_{0},\lambda) = f_c(\lambda) + \varepsilon.
    \]
    
    In case (i), let $l(\lambda)=m(x_{0},\lambda)-f_c(\lambda)-\varepsilon < 0$. By continuity of $m(x_{0},\lambda)$ and $f_c(\lambda)$, $l$ is continuous, so there exists a neighborhood $U$ of $\lambda$ such that $l(\lambda')<0$ for all $\lambda'\in U$, i.e., $m(x_{0},\lambda')<f_c(\lambda')+\varepsilon$. Thus, $x_{0}\in S_c(\lambda')\cap V$ for all $\lambda'\in U$.
    
    In case (ii), let $x^{*} \in S^\varepsilon_c(\lambda)$ and  $ x^{*} \neq x_0 $ satisfy $m(x^{*},\lambda)=f_c(\lambda)$. For $t\in(0,1)$, introduce
    $y(t)=t x_{0}+(1-t)x^{*}$. By linearity,
    \[
    m(y(t),\lambda)=(\c+\lambda \d)^{t}(t x_{0} + (1-t)x^{*})= t\,m(x_{0},\lambda)+(1-t)\,m(x^{*},\lambda)
    =f_c(\lambda)+t\varepsilon<f_c(\lambda)+\varepsilon.
    \]
    
    Since $x_{0}\in V$ and $V$ is open, there exists $t$ close enough to $1$ such that $y(t)\in V$. Set $x_{1}=y(t)$, then $x_{1}\in V$ and $m(x_{1},\lambda)<f_c(\lambda)+\varepsilon$, the case is therefore reduced to (i).
\end{proof}

\begin{figure}[htpb]
    \centering
    \begin{subfigure}{0.45\textwidth}
        \centering
        \includegraphics[width=1\linewidth]{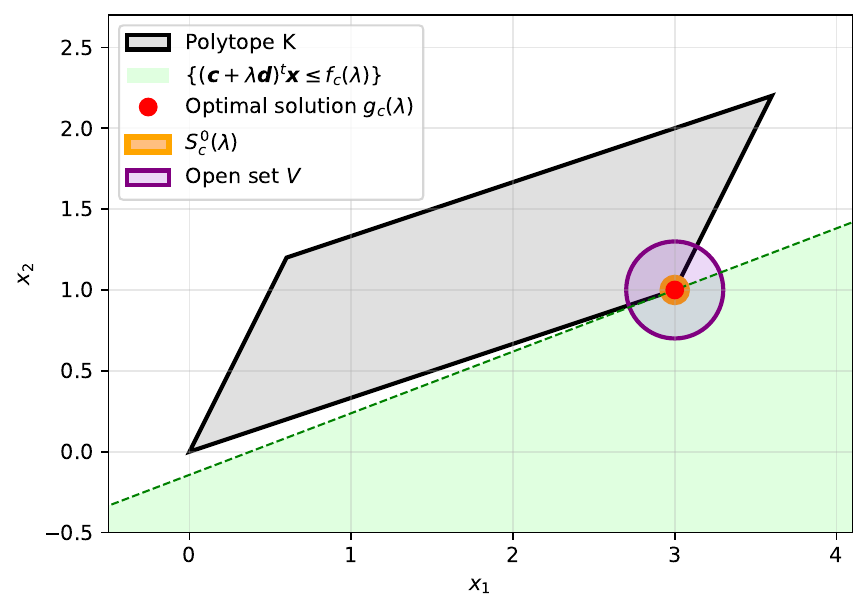}
        \caption{For $\lambda_1$, smaller than the critical value $\lambda_0$: the optimal solution is at the same vertex of $K$ and stays inside the open set $V$.}
        \label{fig:uhc1}
    \end{subfigure}
    \hfill
    \begin{subfigure}{0.45\textwidth}
        \centering
        \includegraphics[width=1\linewidth]{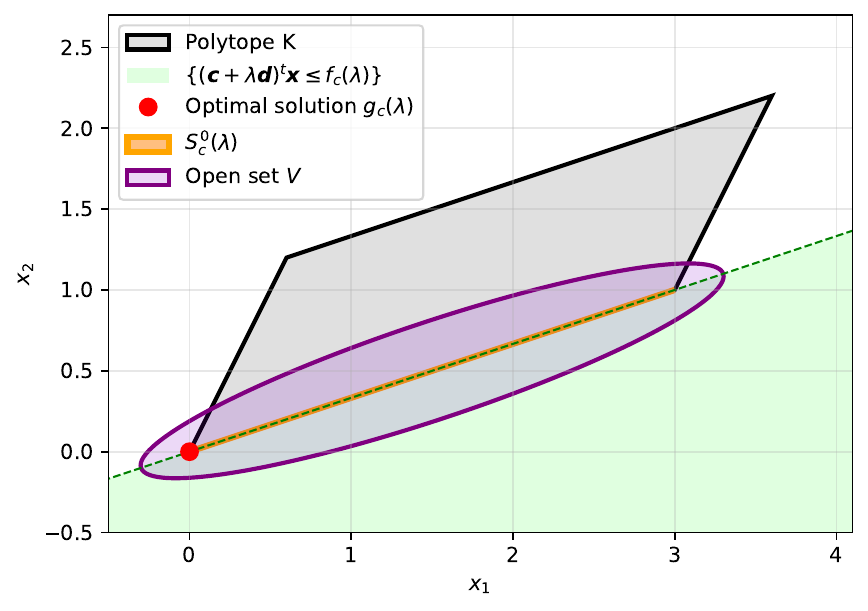}
        \caption{At the critical value of $\lambda_0$: the set $(\c + \lambda_0 \d)^t \x \leq f_c(\lambda)$ intersects a larger face of $K$, corresponding to a transition between optimal faces.}
        \label{fig:uhc2}
    \end{subfigure}
    \vskip\baselineskip
    \begin{subfigure}{\textwidth} 
        \centering
        \begin{minipage}{0.45\textwidth}
            \centering
            \includegraphics[width=\linewidth]{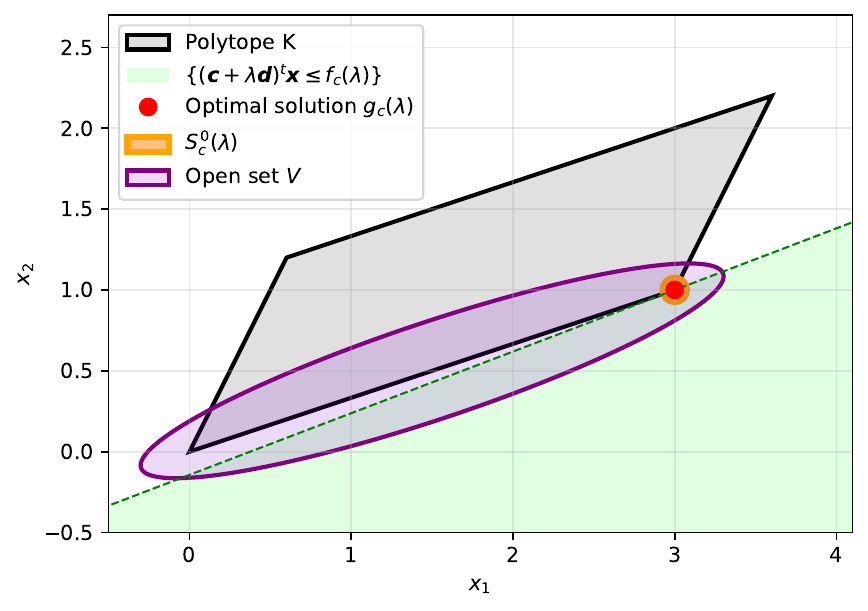}
        \end{minipage}
        \hfill
        \begin{minipage}{0.45\textwidth}
            \centering
            \includegraphics[width=\linewidth]{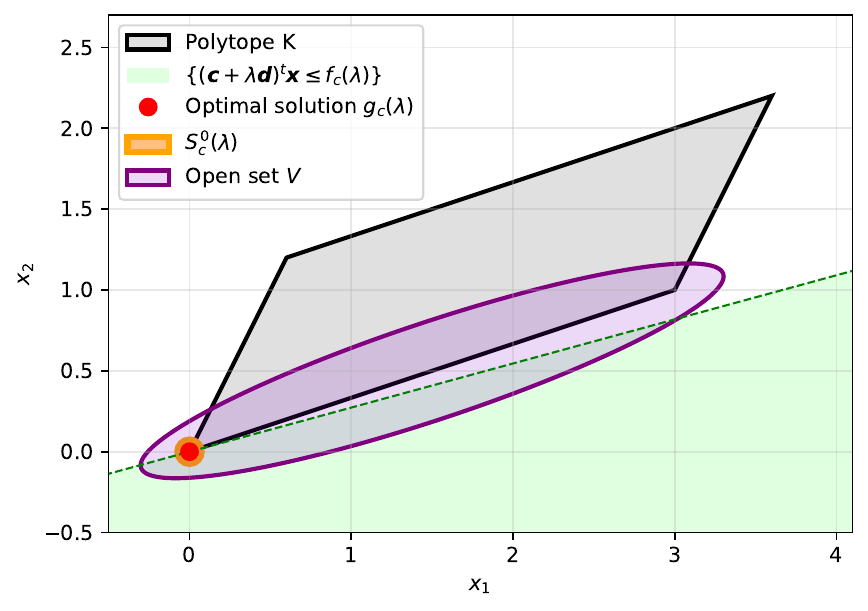}
        \end{minipage}
        \caption{
            For $\lambda'$, smaller than $\lambda_0$ 
            (see left subfigure) or bigger (see right subfigure),
            the sets $S^0_c(\lambda')$ remain entirely contained in the open set $V$.
        }
        \label{fig:uhc3_4}
    \end{subfigure}
    \caption{Illustration of upper semi-continuity when $\varepsilon = 0$. Despite the change in the optimal face of the polytope $K$, the set $S^0_c(\lambda)$ remains locally contained in the open set $V$. Indeed, for any $\lambda$ slightly larger than $\lambda_1$ in Figure \ref{fig:uhc1}, the situation in Figure \ref{fig:uhc2} never occurs locally. For any perturbation of $\lambda_1$, however small, it is always possible to choose a $\delta>0$ small enough to prevent $S^0_c(\lambda')$ from reaching the configuration shown in Figure \ref{fig:uhc2}. In other words, infinitesimal changes in $\lambda$ keep $S^0_c(\lambda')$ inside the open set $V$. At the critical case $\lambda_0$, the geometry of the level set $(\c+\lambda \d)^t\x\le f_c(\lambda)$ ensures that $S^0_c(\lambda)$ remains in $V$, which contains the entire region in which the optimal face changes. Indeed, for every value of $\lambda$ smaller or larger than $\lambda_0$, $V$ encompasses all $S^0_c(\lambda)$ (see Figures \ref{fig:uhc2} and \ref{fig:uhc3_4}).  These observations confirm that, even without tolerance $\varepsilon$,  the correspondence is indeed upper semi-continuous.}
    \label{fig:uppersemicontinuity}
\end{figure}
Figure \ref{fig:lowersemicontinuity_epsilon} illustrates a visual example with $\varepsilon>0$ in which $S_c^\varepsilon(\lambda)$ exhibits the behavior required by Definition \ref{def:LSC}, and is therefore lower semi-continuous in this setting. 

\begin{figure}[htbp]
    \centering
    \begin{subfigure}{0.45\textwidth}
        \centering
        \includegraphics[width=1\linewidth]{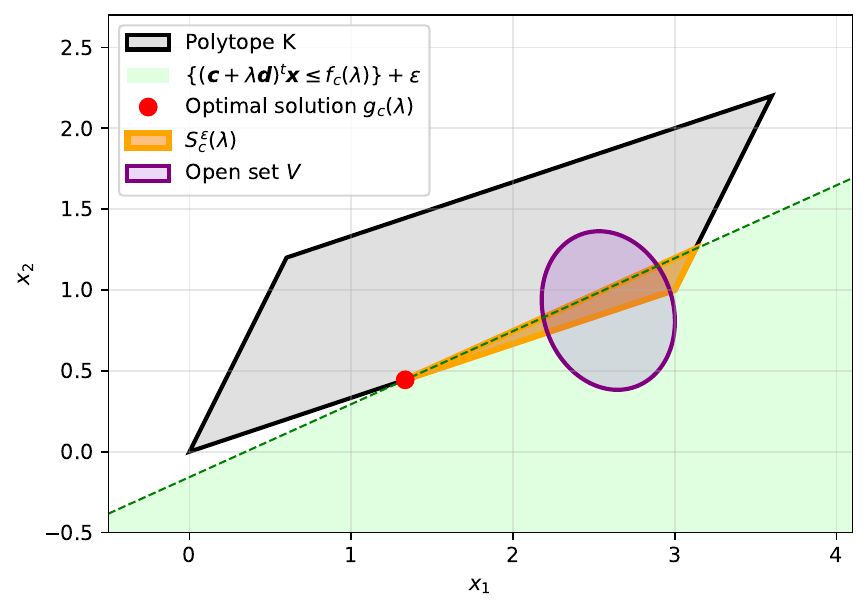}
    \end{subfigure}
    \hfill
    \begin{subfigure}{0.45\textwidth}
        \centering
        \includegraphics[width=1\linewidth]{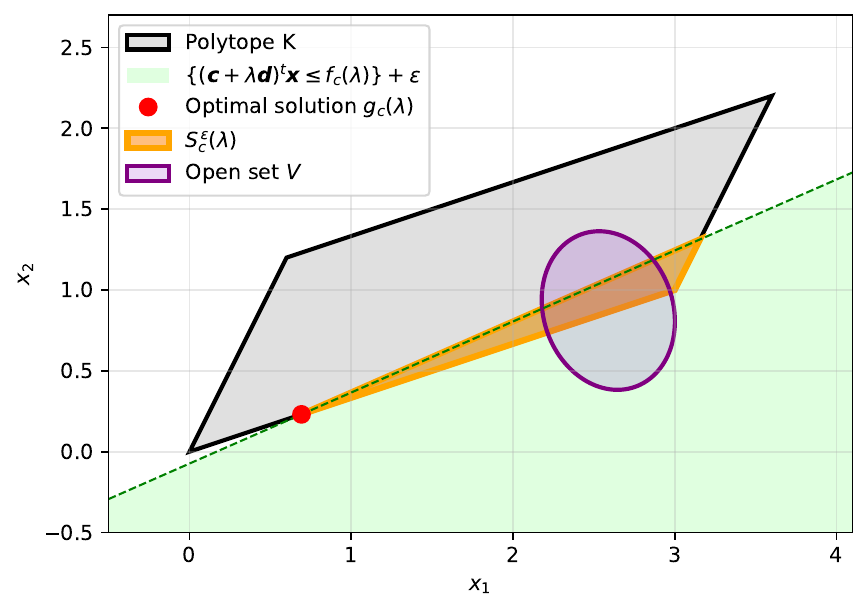}

    \end{subfigure}
    \caption{Illustration of lower semi-continuity for $\varepsilon > 0$. (Left) At a given value of $\lambda$, the optimal set $S^\varepsilon_c(\lambda)$ intersects the open set V. (Right) For a neighboring value $\lambda'$, the tolerance $\varepsilon$ ensures that $S^\varepsilon_c(\lambda') \cap V = S^\varepsilon_c(\lambda) \cap V \neq \varnothing$ remains within the open set $V$. This contrasts with the case $\varepsilon = 0$ (see Figure \ref{fig:lowersemicontinuity}.}
    \label{fig:lowersemicontinuity_epsilon}
\end{figure}

However, this is true only when $\varepsilon>0$; Figure \ref{fig:lowersemicontinuity} illustrates a situation in which the definition of lower semi-continuity is not satisfied.

\begin{observation}
    When $\varepsilon = 0$, the set-valued mapping $S^0_c(\lambda)$ is not lower semi-continuous at every $\lambda\in\Lambda$.
\end{observation}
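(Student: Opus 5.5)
The observation is a negative statement, so it is enough to exhibit one instance where lower semi-continuity fails. I will reuse Example \ref{ex:1} at the critical value $\lambda_0=-7$. At this value the optimal set $S^0_c(-7)$ is the edge joining $(0\ \ 0)$ and $(3\ \ 1)$. For $\lambda$ slightly below $-7$ (for instance $\lambda_1=-7.1$) the optimal set is the single vertex $\{(3\ \ 1)\}$. The plan is to choose an open set $V$ that meets the edge away from $(3\ \ 1)$ and to show that $S^0_c(\lambda')\cap V=\varnothing$ for every $\lambda'<-7$ close to $-7$. Then no neighborhood $U$ of $-7$ can satisfy Definition \ref{def:LSC}.

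The concrete steps are as follows.

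\textbf{Critical value.} Check that at $\lambda=-7$ the cost vector is $\c+\lambda\d=(-5,\ 15)$. Both $(0\ \ 0)$ and $(3\ \ 1)$ give the objective value $0$. Verify, by evaluating the objective at the vertices of $K$, that this common value is the minimum and that $S^0_c(-7)$ is exactly the segment between these two points.

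\textbf{Nearby values.} Show that for every $\lambda'\in(-7-\delta,-7)$ the unique minimizer is $(3\ \ 1)$. Compute $(\c+\lambda'\d)^t\x$ at each vertex of $K$. The vertices are $(0\ \ 0)$, $(3\ \ 1)$ and the other two intersection points, which I will compute explicitly. Since the objective is linear in $\lambda'$, it suffices to show that $(3\ \ 1)$ is strictly better than every other vertex for $\lambda'$ just below $-7$. At $(0\ \ 0)$ this holds because the objective there is identically $0$, whereas at $(3\ \ 1)$ it equals $(2+\lambda')\cdot 3+(1-2\lambda')=7+\lambda'<0$. The remaining two vertices are strictly suboptimal at $\lambda=-7$, so by continuity they remain strictly suboptimal nearby.

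\textbf{Conclusion.} Take $V$ to be the open ball of radius $1$ around $(0\ \ 0)$. It meets $S^0_c(-7)$, since it contains $(0\ \ 0)$, but it does not contain $(3\ \ 1)$. Hence $S^0_c(\lambda')\cap V=\varnothing$ for all $\lambda'\in(-7-\delta,-7)$, and every neighborhood $U$ of $-7$ contains such a $\lambda'$. This contradicts lower semi-continuity at $-7$.

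The main obstacle is the bookkeeping of the vertices of $K$, that is, confirming the exact optimal sets on both sides of $-7$. A quick alternative that avoids the full enumeration is to note that $g_c$ jumps from $3$ to $0$ at $-7$, as already shown in the example. If $S^0_c$ were continuous there, Berge's maximum theorem (Theorem \ref{th:bergemaximum}, applied with $\psi=-\boldsymbol{\varphi}^t\x$) would make $g_c$ continuous. Upper semi-continuity holds by Lemma \ref{T:USC} with $\varepsilon=0$, so the failure must come from lower semi-continuity. I would present this argument as the main proof and keep the explicit open set $V$ as the illustration.
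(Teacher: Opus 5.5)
Your proposal is correct. Its direct part follows the paper's own approach. The paper gives no formal proof, only Figure~\ref{fig:lowersemicontinuity}: a schematic in which the optimal face at a critical $\lambda$ meets an open set $V$ and then, for nearby $\lambda'$, collapses to a vertex outside $V$.

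You make this rigorous on Example~\ref{ex:1} at $\lambda_0=-7$, and the numbers check out:
\begin{itemize}
\item $K$ has vertices $(0\ \ 0)$, $(3\ \ 1)$, $(18/5\ \ 11/5)$ and $(3/5\ \ 6/5)$.
\item At $\lambda=-7$ the objective is $5(3x_2-x_1)$, which takes the values $0,0,15,15$ at these vertices.
\item At $(3\ \ 1)$ the objective equals $7+\lambda$.
\end{itemize}
Hence $S^0_c(\lambda')=\{(3\ \ 1)\}$ just left of $-7$. This uses the fact that the optimal set of an LP over a polytope is the convex hull of its optimal vertices, which you should state explicitly.

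The Berge-based argument you want as the main proof is a genuinely different, indirect route. If $S^0_c$ were lower semi-continuous everywhere, Lemma~\ref{T:USC} with $\varepsilon=0$, together with nonempty compact values, would make it continuous. Theorem~\ref{th:bergemaximum} would then force $g_c$ to be continuous.

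This is valid for the statement as phrased, but it does not avoid the vertex bookkeeping. The paper only reports the two samples $g_c(-7.1)=3$ and $g_c(-7)=0$, which do not by themselves establish a jump. You need $g_c(\lambda')=3$ on a whole left neighbourhood of $-7$, and that is exactly the statement $S^0_c(\lambda')=\{(3\ \ 1)\}$ there.

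It also uses more machinery than needed: the global Berge theorem only tells you lower semi-continuity fails \emph{somewhere}. A single elementary fact localizes the failure at $-7$: lower semi-continuity of $S^0_c$ at $\lambda_0$ forces $\limsup_{\lambda\to\lambda_0}g_c(\lambda)\le g_c(\lambda_0)$, and the jump from $3$ to $0$ violates this.

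I would therefore keep your explicit open ball $V$ as the main proof, since it is self-contained and identifies both the point and the witness set. The link with the discontinuity of $g_c$ works well as a remark.
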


\begin{figure}[htpb]
    \centering
    \begin{subfigure}{0.45\textwidth}
        \centering
        \includegraphics[width=1\linewidth]{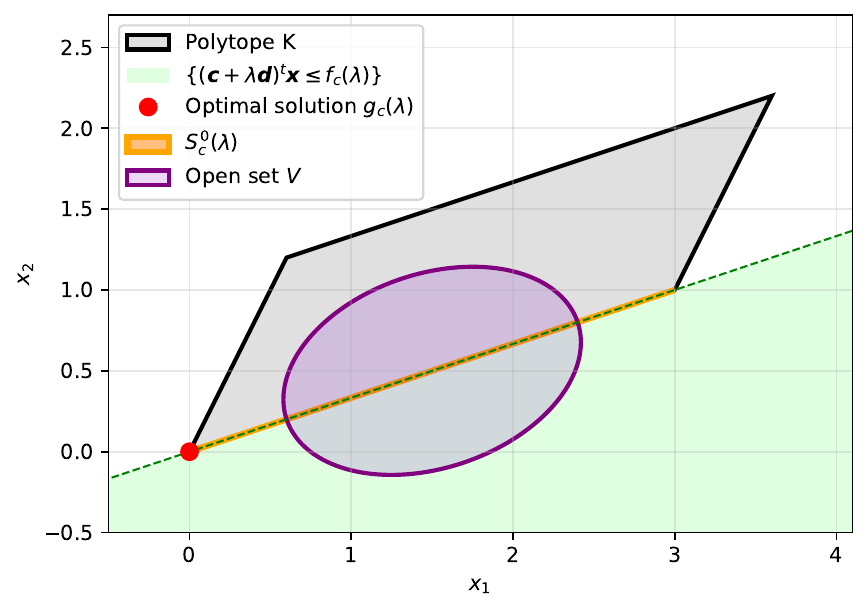}
    \end{subfigure}
    \hfill
    \begin{subfigure}{0.45\textwidth}
        \centering
        \includegraphics[width=1\linewidth]{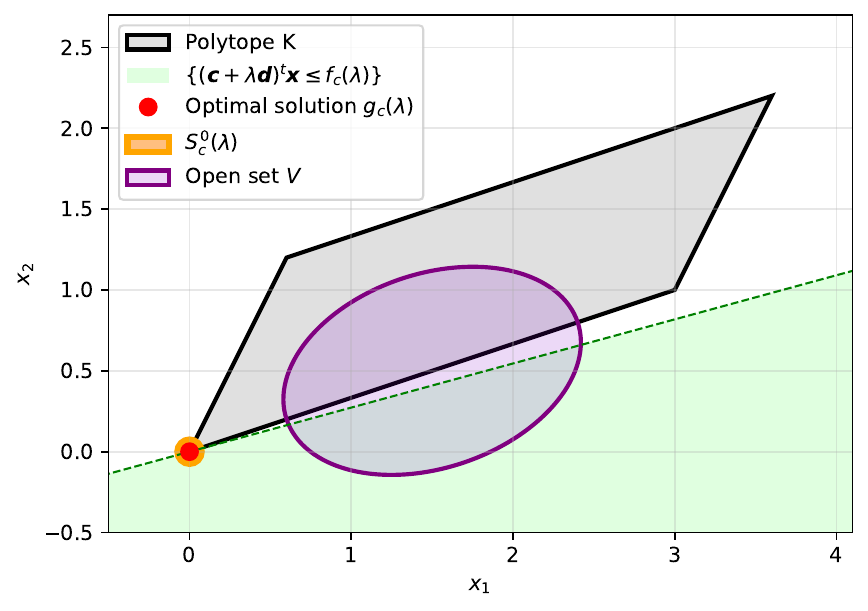}
    \end{subfigure}
    \caption{Example illustrating lower non-semi-continuity when $\varepsilon = 0$. In the figure on the left, the critical value $\lambda$ and the set of optimal solutions $S_c^0(\lambda)$ intersect the open set $V$.
    However, in the figure on the right, for a neighboring value $\lambda'$, the optimal face collapses into a vertex outside $V$, so that $S_c^0(\lambda') \cap V = \varnothing$. This shows the absence of lower semi-continuity.}
    \label{fig:lowersemicontinuity}
\end{figure}

\begin{theorem}\label{T:P}
    The set-valued mapping $S^\varepsilon_c(\lambda)$ is continuous at every $\lambda \in \Lambda$ if $\varepsilon > 0$.
\end{theorem}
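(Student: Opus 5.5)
Theorem \ref{T:P} follows from the two lemmas just established, given the definition of continuity of a set-valued mapping as the conjunction of upper and lower semi-continuity. Fix $\varepsilon>0$ and $\lambda\in\Lambda$. Lemma \ref{T:USC} gives upper semi-continuity of $S_c^\varepsilon$ at $\lambda$; it is stated for every $\varepsilon\ge 0$, so in particular for our $\varepsilon>0$. Lemma \ref{T:LSC} gives lower semi-continuity of $S_c^\varepsilon$ at $\lambda$, precisely because $\varepsilon>0$. Hence $S_c^\varepsilon$ is continuous at $\lambda$. Since $\lambda$ was arbitrary, the claim holds on all of $\Lambda$.

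Before concluding, I would make sure the hypotheses behind both lemmas are in place. For the upper semi-continuity argument, I would check three things. First, $K$ is nonempty and compact: it is a polytope, so it is bounded. If only a polyhedron is assumed, I would restrict to $S_c^\varepsilon(\lambda)$ on a compact neighbourhood, or invoke the standing assumption that $f_c$ is finite on $\Lambda$. Second, $f_c$ is continuous on $\Lambda$, which is Theorem \ref{th:concave}. Third, the graph of $S_c^\varepsilon$ is closed, because it is defined by non-strict inequalities in the continuous functions $(\x,\lambda)\mapsto(\c+\lambda\d)^t\x-f_c(\lambda)$ and $A\x-\b$. A closed-graph correspondence with values in a compact set is upper semi-continuous. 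This gives a self-contained route that is more robust than the invocation of Berge's theorem in Lemma \ref{T:USC}, whose $\Psi$ is the set of exact maximizers rather than the $\varepsilon$-level set.

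For lower semi-continuity, I would re-check the case split in Lemma \ref{T:LSC}. Case (i) uses strict inequality and continuity of $f_c$. In case (ii), a point $x_0$ on the boundary level $f_c(\lambda)+\varepsilon$ is moved toward an exact optimizer $x^*$, which exists since $f_c(\lambda)$ is finite and attained on a polyhedron. The resulting point satisfies the inequality strictly and, for $t$ close to $1$, remains in $V$. The case $x^*=x_0$ cannot occur, since $\varepsilon>0$ forces $m(x_0,\lambda)\neq f_c(\lambda)$. This is exactly where $\varepsilon>0$ is essential, and it is consistent with the observation that $S_c^0$ fails to be lower semi-continuous.

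I expect the main obstacle to be compactness and nonemptiness of the values on a neighbourhood of $\lambda$, needed for the upper semi-continuity direction when $K$ is unbounded. I would handle it by assuming, as the paper does, that $K$ is a polytope. Nonemptiness is automatic because $S_c^0(\lambda)\subseteq S_c^\varepsilon(\lambda)$ and $f_c(\lambda)$ is attained.
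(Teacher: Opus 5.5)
Your proof of Theorem \ref{T:P} is the same as the paper's at the top level. The paper proves it in one line as the conjunction of Lemma \ref{T:USC} and Lemma \ref{T:LSC}. Your re-check of the case split in Lemma \ref{T:LSC} also follows the paper, with the correct remark that $x^*\neq x_0$ is automatic once $\varepsilon>0$.

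Where you genuinely differ is in how upper semi-continuity is justified, and your route is the better-founded one. The paper proves Lemma \ref{T:USC} by applying part (ii) of Theorem \ref{th:bergemaximum} with $\psi(\x,\lambda)=(\c+\lambda\d)^t\x$. That only gives upper semi-continuity of the map of \emph{exact} optimizers, i.e.\ the case $\varepsilon=0$ (up to a sign). It does not reach the sublevel map $S^\varepsilon_c$ with $\varepsilon>0$, which is what Theorem \ref{T:P} actually needs. Your closed-graph argument handles every $\varepsilon\ge 0$ directly: the graph is closed because $f_c$ is continuous by Theorem \ref{th:concave}, and compactness of $K$ then gives upper semi-continuity. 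The Berge route can be repaired by changing the objective, e.g.\ $\psi(\x,\lambda)=\min\{0,\,f_c(\lambda)+\varepsilon-(\c+\lambda\d)^t\x\}$. Its maximizers over $K\times\{\lambda\}$ are exactly $S^\varepsilon_c(\lambda)\times\{\lambda\}$, so this is essentially your argument in another form.

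One caution: your fallback, that finiteness of $f_c$ could substitute for boundedness of $K$, does not work. Take $K=\mathbb{R}^2_+$, $\c=0$, $\d=(1,0)^t$ and $\Lambda=[0,2]$. Then $f_c\equiv 0$ and $S^\varepsilon_c(\lambda)=[0,\varepsilon/\lambda]\times[0,\infty)$ for $\lambda>0$. The open set $V=\{\x : x_1<\varepsilon+1/(1+x_2^2)\}$ contains $S^\varepsilon_c(1)$ but no $S^\varepsilon_c(\lambda')$ with $\lambda'<1$. So upper semi-continuity fails at $\lambda=1$, even though the graph is closed. The polytope assumption you ultimately adopt, which is also the paper's, is therefore genuinely needed.
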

\begin{proof}
    By definition, $S^\varepsilon_c(\lambda)$ is continuous if it is upper semi-continuous and lower semi-continuous. Lemmas \ref{T:USC} and \ref{T:LSC} prove respectively that it is upper and lower semi-continuous for $\varepsilon > 0$.
\end{proof}
It is now possible to return to Theorem \ref{Berge_maximum}, previously described, and prove it as a direct consequence of the continuity of optimal solution mapping $S^\varepsilon_c(\lambda)$.

\begin{proof}[Proof of Theorem \ref{Berge_maximum}]\label{proof:berge}
    According to Theorem \ref{th:bergemaximum}, the continuity of $g_c^\varepsilon(\lambda)$ is guaranteed provided that the set-valued mapping
    $S^\varepsilon_c : \Lambda \rightrightarrows X$ is nonempty-valued and \emph{continuous}, while the value function $\psi : X \to \mathbb{R}$, defined by $\psi(\x) = \boldsymbol{\varphi}^t \x$ is continuous. 
    Since
    \[
    g_c^\varepsilon(\lambda)
    = \min_{\x\in S^\varepsilon_c(\lambda)} \boldsymbol{\varphi}^t\x
    = -\,\max_{\x\in S^\varepsilon_c(\lambda)} \bigl(-\boldsymbol{\varphi}^t\x\bigr),
    \]
    Berge's maximum theorem applies directly to $-\boldsymbol{\varphi}^t\x$ and yields the continuity of
    $g_c^\varepsilon(\lambda)$, provided that $S^\varepsilon_c(\lambda)$ is continuous (which is proven by Theorem \ref{T:P}).
\end{proof}

Now that the continuity of $g_c^\varepsilon(\lambda)$ has been established for each $\varepsilon>0$, there is no practical reason to avoid adding $\varepsilon$. As explained above, it is reasonable to assume an arbitrarily low tolerance. Indeed, data are uncertain, solver results are affected by finite precision, and near-optimal solutions are often indistinguishable from exact solutions in practice.
\subsubsection*{Equivalent reformulation}\label{sub:reformulation}

In this subsection, the representation of $g_c^\varepsilon(\lambda)$ is analyzed in more detail. We also show that $g_c^\varepsilon(\lambda)$ can be reformulated as the optimal value of a parametric linear program of type $\P_A(\lambda) $, in which the parameter $\lambda$ appears in the constraint matrix $A$. In other words, even if $\lambda$ initially perturbs only the cost vector $\c$, the decision-oriented reformulation transfers this perturbation to the constraint matrix, so that $g_c^\varepsilon(\lambda)$ is equivalent to a problem of type $\P_A(\lambda)$.

Using Theorem \ref{th:concave}, $f_c(\lambda)$ is a piecewise linear concave function and can therefore be written as the minimum of multiple linear functions, i.e. $f_c(\lambda) = \min \{ \alpha_i \lambda + \beta_i \mid i \in 1...k \},\  \forall \lambda$. Moreover, since $t\leq \min u_i \Longleftrightarrow t \leq u_i$ for all $i$, the constraint $(\c+\lambda\d)^t\x \leq f_c(\lambda)+\varepsilon$ is equivalent to the finite number of linear functions $(\c+\lambda \d)^t\x \le \alpha_i \lambda + \beta_i+\varepsilon, \quad \forall i \in 1...k$.

\begin{observation}\label{th:reformulationstep1}
     \eqref{eq:g_problem_c_epsilon} is equivalent to the following problem :
    \begin{equation}
        \begin{aligned}\label{eq:g_problem_c_final}
            g_c^\varepsilon(\lambda) = \min_\x& \quad \boldsymbol{\varphi}^t\x\\
            \text{s.t. } &(\c+\lambda \d)^t\x \le \alpha_i \lambda + \beta_i+\varepsilon, \quad \forall i \in 1...k  \\ & A\x \leq \b \\
             & \x \geq 0
        \end{aligned}
    \end{equation}
    where $f_c(\lambda) = \min \{ \alpha_i \lambda + \beta_i \mid i \in 1...k \},\  \forall \lambda \in \Lambda$.
\end{observation}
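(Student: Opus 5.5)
The plan is to show that, for every fixed $\lambda\in\Lambda$, the feasible set of \eqref{eq:g_problem_c_epsilon} coincides with the feasible set of \eqref{eq:g_problem_c_final}. Both problems minimize the same objective $\boldsymbol{\varphi}^t\x$, so equal feasible sets give equal optimal values $g_c^\varepsilon(\lambda)$, and even identical sets of minimizers. The constraints $A\x\le\b$ and $\x\ge 0$ are common to both problems. The proof therefore reduces to showing that the single constraint $(\c+\lambda\d)^t\x\le f_c(\lambda)+\varepsilon$ holds if and only if the $k$ constraints $(\c+\lambda\d)^t\x\le \alpha_i\lambda+\beta_i+\varepsilon$, $i=1,\dots,k$, all hold.

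First I will justify the representation $f_c(\lambda)=\min_{i}\{\alpha_i\lambda+\beta_i\}$ on all of $\Lambda$. By Theorem \ref{th:concave}, $f_c$ is continuous, concave and piecewise linear on $\Lambda$, with finitely many pieces. I will take the affine functions $\ell_i(\lambda)=\alpha_i\lambda+\beta_i$ to be the extensions of these linear pieces.

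The key step is that $f_c\le \ell_i$ on the whole interval $\Lambda$, not only on the $i$-th piece. This follows from concavity: each $\ell_i$ is a supporting line of the concave function $f_c$ at any interior point of its piece, so it lies above $f_c$ everywhere. Since equality holds on the $i$-th piece, the minimum of the $\ell_i$ equals $f_c$.

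A more direct route avoids this argument. Write $f_c(\lambda)=\min_{v}(\c+\lambda\d)^t v$ over the finitely many vertices $v$ of the polytope $K$, and set $\alpha_v=\d^t v$, $\beta_v=\c^t v$. This gives the representation immediately, at the cost of possibly redundant indices, which do no harm. I would use the vertex version as the main argument and note the piece version as a remark.

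Second, for fixed $\lambda$, set $t=(\c+\lambda\d)^t\x$ and $u_i=\alpha_i\lambda+\beta_i+\varepsilon$. Then $\min_i u_i=f_c(\lambda)+\varepsilon$, and the elementary fact $t\le\min_i u_i \iff t\le u_i$ for all $i$ gives the equivalence of the constraints. This completes the argument.

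The only real obstacle is the representation step, namely making sure the min-formula holds globally on $\Lambda$ rather than piece by piece. The vertex argument settles it cleanly, provided $K$ is a nonempty polytope, as the paper assumes in Lemma \ref{T:USC}. If $K$ were unbounded, I would instead use the vertices of $K$ together with the assumption of finite optimal value on $\Lambda$. Finite optimality means every extreme ray $r$ satisfies $(\c+\lambda\d)^t r\ge 0$ on $\Lambda$, so the minimum over $K$ is still attained at a vertex.
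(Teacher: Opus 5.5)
Your proof is correct and is essentially the paper's: write $f_c$ as a finite minimum of affine functions, then use $t\le\min_i u_i \iff t\le u_i$ for all $i$ to turn the single quasi-optimality constraint into $k$ linear ones, so that for each fixed $\lambda$ the feasible sets, and hence the values and minimizers, coincide. The only difference is how you justify the min-representation: you derive it from the vertices of $K$, which is self-contained but may introduce many redundant indices, whereas the paper reads it directly off the concavity and piecewise linearity of $f_c$ in Theorem \ref{th:concave}, which gives only the non-redundant pieces.
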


Observation \ref{th:reformulationstep1} shows that \eqref{eq:g_problem_c_epsilon} can be rewritten by adding a finite number of additional constraints. Based on this reformulation, two points of view can then be considered. On the one hand, a local approach,  based on a decomposition of $\Lambda$ into intervals $I_j$ on which the linear piece of $f_c(\lambda)$ and the optimal basis remain unchanged, leads to the homographic expression given by Theorem \ref{th:g_cpiece}. On the other hand, a global approach, which allows the problem to be interpreted as a parametric problem of type $\P_A(\lambda)$, is established in Observation \ref{obs:equivalentA}.

\begin{theorem}\label{th:g_cpiece}
    For an interval $I_j$, where $\lambda \in [\lambda_j,\lambda_{j+1}]$ over which the same basis $\mathcal B$ remains optimal, the optimal value function $g_c^{\varepsilon(j)}(\lambda)$ is a linear-fractional function of $\lambda$ and is expressed in terms of three parameters as $$g_c^{\varepsilon(j)}(\lambda)=\btheta_{\mathcal B}+\frac{\lambda \bkappa_{\mathcal B}}{1+\lambda \bnu_{\mathcal B}},$$
    where $\btheta_{\mathcal B}, \bkappa_{\mathcal B}, \bnu_{\mathcal B}\in\mathbb{R}$ depend on the basis $\mathcal B$, and
    $$ \btheta_{\mathcal B}:=\boldsymbol{\varphi}^tA_{\mathcal B,0}'^{-1}\b_0,
    \qquad \bkappa_{\mathcal B}:=\boldsymbol{\varphi}^t(\alpha_j-\v^tA_{\mathcal B,0}'^{-1}\b_0)\,A_{\mathcal B,0}'^{-1}\u,
    \qquad \bnu_{\mathcal B}:=\v^tA_{\mathcal B,0}'^{-1}\u.$$ In these definitions, $A'_{\mathcal B,0}:=\begin{pmatrix}A_{\mathcal B}\\ \c_{\mathcal B}^t\end{pmatrix}$ denotes the augmented basis matrix, while $\u:=\begin{pmatrix}0\\1\end{pmatrix}$ and $\v^t:=\begin{pmatrix}0 & \d_{\mathcal B}^t \end{pmatrix}$ specify the rank-one perturbation. Moreover $\b_0:=\begin{pmatrix}\b\\ \beta_j+\varepsilon\end{pmatrix}$.
\end{theorem}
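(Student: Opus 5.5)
On $I_j$ I would work with the single linear piece $\alpha_j\lambda+\beta_j$ of $f_c$ from Observation~\ref{th:reformulationstep1} together with the fixed optimal basis $\mathcal B$ of \eqref{eq:g_problem_c_final}. On this interval the active optimality cut reads $(\c+\lambda\d)^t\x\le \alpha_j\lambda+\beta_j+\varepsilon$. Moving the $\lambda$-terms to the left, the basic system determined by $\mathcal B$ becomes
\[
\bigl(A'_{\mathcal B,0}+\lambda\,\u\v^t\bigr)\x_{\mathcal B}=\b_0+\lambda\,\alpha_j\,\u .
\]
Here the row of $A_{\mathcal B}$ and the row $\c_{\mathcal B}^t$ are stacked into $A'_{\mathcal B,0}$, the matrix $\u\v^t$ places $\d_{\mathcal B}^t$ in the last row, and the last entry of the right-hand side is $\beta_j+\varepsilon$. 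This is the $\P_A$-type structure announced before the theorem: the perturbation is a rank-one update of the augmented basis matrix. Nonbasic variables are zero. I would use that the basis is optimal, hence nonsingular, on the whole of $I_j$, so that $g_c^{\varepsilon(j)}(\lambda)=\boldsymbol{\varphi}^t\x_{\mathcal B}(\lambda)$, with $\boldsymbol{\varphi}$ restricted to $\mathcal B$.

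The key step is to invert $A'_{\mathcal B,0}+\lambda\u\v^t$ with the Sherman--Morrison formula:
\[
(A'+\lambda\u\v^t)^{-1}=A'^{-1}-\frac{\lambda A'^{-1}\u\v^tA'^{-1}}{1+\lambda\v^tA'^{-1}\u}.
\]
This is valid as long as $1+\lambda\bnu_{\mathcal B}\neq0$, which holds on $I_j$ because the basis matrix stays nonsingular there. Applying it to $\b_0+\lambda\alpha_j\u$ and writing $z:=\v^tA'^{-1}\b_0$ and $w:=A'^{-1}\u$ gives
\[
\x_{\mathcal B}=A'^{-1}\b_0+\lambda\alpha_j w-\frac{\lambda w\,(z+\lambda\alpha_j\bnu_{\mathcal B})}{1+\lambda\bnu_{\mathcal B}}.
\]
I would then put the last two terms over the common denominator. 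The numerator is $\lambda\alpha_j w(1+\lambda\bnu_{\mathcal B})-\lambda w z-\lambda^2\alpha_j\bnu_{\mathcal B} w$. The $\lambda^2$ terms cancel, leaving $\lambda(\alpha_j-z)w$. Hence
\[
\x_{\mathcal B}=A'^{-1}\b_0+\frac{\lambda(\alpha_j-z)}{1+\lambda\bnu_{\mathcal B}}\,A'^{-1}\u .
\]
Multiplying by $\boldsymbol{\varphi}^t$ yields the three constants $\btheta_{\mathcal B},\bkappa_{\mathcal B},\bnu_{\mathcal B}$ exactly as stated.

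I expect the main obstacle to be setting up the basic system correctly rather than the algebra. The basis of \eqref{eq:g_problem_c_final} includes slacks and must contain the active cut corresponding to piece $j$. The statement's notation treats $A_{\mathcal B}$ as the full square basic block, with slack columns absorbed, so $A'_{\mathcal B,0}$ is square. I would make explicit that the other cuts $i\neq j$ are inactive on $I_j$, so their slacks are basic and they do not affect $\x_{\mathcal B}$. I would also note that the $\lambda$-dependence enters only through the single row carrying $\d_{\mathcal B}$, which is what makes the update rank-one and therefore gives a single homographic term.
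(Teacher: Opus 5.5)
Your proposal is correct and follows essentially the same route as the paper. Both restrict to the single affine piece $\alpha_j\lambda+\beta_j$ on $I_j$, write the basic solution as $(A'_{\mathcal B,0}+\lambda\u\v^t)^{-1}(\b_0+\lambda\alpha_j\u)$, apply Sherman--Morrison, and obtain the same cancellation of the $\lambda^2$ terms; your argument that $1+\lambda\bnu_{\mathcal B}\neq0$ also matches the paper, which makes it explicit through the matrix determinant lemma $\det(A'_{\mathcal B,0}+\lambda\u\v^t)=\det(A'_{\mathcal B,0})(1+\lambda\bnu_{\mathcal B})$, and the only difference is that the paper simply discards the dominated cuts $i\neq j$ rather than keeping them with basic slacks.
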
 
\begin{proof}
    For an interval $I_j$ on which a given basis $\mathcal B$ remains optimal remains optimal, $f_c^{(j)}(\lambda)=\alpha_j\lambda+\beta_j$, since $x_{\mathcal B}=A_{\mathcal B}^{-1}b$ is independent of $\lambda$, giving $f_c^{(j)}(\lambda)=\c^t \x_{\mathcal B}+\lambda\, \d^t \x_{\mathcal B}$ (see Theorem \ref{th:concave}).
    The value function $g_c^{\varepsilon(j)}(\lambda)$ is thus equivalent to this representation :
    \begin{equation*}
        \begin{aligned}
            g_c^{\varepsilon(j)}(\lambda) = \min_\x& \quad \boldsymbol{\varphi}^t\x\\
            \text{s.t. }   & A\x \leq \b \\ 
            &(\c+\lambda \d)^t\x \le \alpha_j \lambda + \beta_j + \varepsilon \\
             & \x \geq 0.
        \end{aligned}
    \end{equation*}
    The solution $x_c^{\varepsilon(j)}(\lambda)$ for a given basis $\mathcal B$ is given by $x_c^{\varepsilon(j)}(\lambda)=A_\mathcal B'(\lambda)^{-1} \b'(\lambda)$, where $A_\mathcal B'(\lambda)$ is the augmented matrix associated with basis $\mathcal B$ and $\b'(\lambda)$ is the augmented right-hand side vector, both depending on $\lambda$.
    {\footnotesize
    \begin{equation*}
        \begin{aligned}
            x_c^{\varepsilon(j)}(\lambda)=
        \begin{bmatrix}
            \begin{pmatrix}
                A_\mathcal B\\\c_\mathcal B^t
            \end{pmatrix}
            + \lambda
            \begin{pmatrix}
                0\\\d_\mathcal B^t
            \end{pmatrix}
        \end{bmatrix}^{-1}
        \begin{bmatrix}
            \begin{pmatrix}
                \b\\\beta_j + \varepsilon
            \end{pmatrix}
            + \lambda
            \begin{pmatrix}
                0\\\alpha_j
            \end{pmatrix}
        \end{bmatrix} &=
        \begin{bmatrix}
            \underbrace{\begin{pmatrix}
                A_\mathcal B\\\c_\mathcal B^t
            \end{pmatrix}}_{A'_{\mathcal B,0}}
            + \lambda \underbrace{\begin{pmatrix}
                0\\1
            \end{pmatrix}}_{\u}
            \underbrace{\begin{pmatrix}
                0&\d_\mathcal B^t
            \end{pmatrix}}_{\v^t}
        \end{bmatrix}^{-1}
        \begin{bmatrix}
            \underbrace{\begin{pmatrix}
                \b\\\beta_j + \varepsilon
            \end{pmatrix}}_{\b_0}
            + \lambda \alpha_j
            \underbrace{\begin{pmatrix}
                0\\1
            \end{pmatrix}}_{\u}
        \end{bmatrix}\\
        & := \begin{bmatrix}
            A'_{\mathcal B,0} + \lambda\u\v^t
        \end{bmatrix}^{-1}\begin{bmatrix}
            \b_0+\lambda\alpha_j\u
        \end{bmatrix}
        \end{aligned}
    \end{equation*}}
    Using \citet{Sherman_Morrison_1949} and 
    if $1+\lambda\,\v^tA_{\mathcal B,0}'^{-1}\u\neq 0$, then
    $$
    (A'_{\mathcal B,0}+\lambda \u \v^t)^{-1}
    =
    A_{\mathcal B,0}'^{-1}
    -
    \frac{\lambda\,A_{\mathcal B,0}'^{-1}\u \v^t A_{\mathcal B,0}'^{-1}}{1+\lambda\, \v^tA_{\mathcal B,0}'^{-1}\u}.
    $$
    Note that $1+\lambda\,\v^tA_{\mathcal B,0}'^{-1}\u\neq 0$ holds automatically on $I_j$: by the matrix determinant lemma, $\det(A'_{\mathcal B,0}+\lambda \u\v^t)=\det(A'_{\mathcal B,0})(1+\lambda\,\v^tA_{\mathcal B,0}'^{-1}\u)$, so $1+\lambda\,\v^tA_{\mathcal B,0}'^{-1}\u=0$ would imply that $A'_{\mathcal B}(\lambda)$ is singular, contradicting the optimality (hence validity) of basis $\mathcal B$ on $I_j$.
    \noindent
    Hence,
    \begin{equation*}
        \begin{aligned}
            x_c^{\varepsilon(j)}(\lambda) & = \left(A_{\mathcal B,0}'^{-1} - \frac{\lambda\,A_{\mathcal B,0}'^{-1}\u \v^t A_{\mathcal B,0}'^{-1}}{1+\lambda\, \v^tA_{\mathcal B,0}'^{-1}\u} \right) (\b_0+\lambda\alpha_j\u)\\
            &= A_{\mathcal B,0}'^{-1}\b_0 + \lambda\alpha_j\,A_{\mathcal B,0}'^{-1}\u - \frac{\lambda\,A_{\mathcal B,0}'^{-1}\u \v^t A_{\mathcal B,0}'^{-1}\b_0} {1+\lambda\, \v^tA_{\mathcal B,0}'^{-1}\u} - \frac{\lambda^2\alpha_j\,A_{\mathcal B,0}'^{-1}\u \v^t A_{\mathcal B,0}'^{-1}\u} {1+\lambda\, \v^tA_{\mathcal B,0}'^{-1}\u}\\
            &= A_{\mathcal B,0}'^{-1}\b_0 + \lambda\alpha_j\,A_{\mathcal B,0}'^{-1}\u - \frac{\lambda\,A_{\mathcal B,0}'^{-1}\u} {1+\lambda\, \v^tA_{\mathcal B,0}'^{-1}\u}\left(\v^t A_{\mathcal B,0}'^{-1}\b_0+\lambda\alpha_j\,\v^t A_{\mathcal B,0}'^{-1}\u\right)\\
            &= A_{\mathcal B,0}'^{-1}\b_0 + \frac{\lambda\,A_{\mathcal B,0}'^{-1}\u}{1+\lambda\, \v^tA_{\mathcal B,0}'^{-1}\u}\left[\alpha_j\left(1+\lambda\, \v^tA_{\mathcal B,0}'^{-1}\u\right)-\left(\v^t A_{\mathcal B,0}'^{-1}\b_0+\lambda\alpha_j\,\v^t A_{\mathcal B,0}'^{-1}\u\right)\right]\\
            &= A_{\mathcal B,0}'^{-1}\b_0 + \frac{\lambda\left(\alpha_j-\v^t A_{\mathcal B,0}'^{-1}\b_0\right)} {1+\lambda\, \v^tA_{\mathcal B,0}'^{-1}\u}\,A_{\mathcal B,0}'^{-1}\u.
        \end{aligned}
    \end{equation*}
    Since $\boldsymbol{\varphi}$ does not depend on $\lambda$, it follows that
    $$
    g_c^{\varepsilon(j)}(\lambda)
    =
    \boldsymbol{\varphi}^t x_c^{\varepsilon(j)}(\lambda)
    =
    \boldsymbol{\varphi}^tA_{\mathcal B,0}'^{-1}\b_0
    + \boldsymbol{\varphi}^t\frac{\lambda\left(\alpha_j-\v^t A_{\mathcal B,0}'^{-1}\b_0\right)}
    {1+\lambda\, \v^tA_{\mathcal B,0}'^{-1}\u}
    \,A_{\mathcal B,0}'^{-1}\u.
    $$
    Therefore, defining
    $$
    \btheta_{\mathcal B}:=\boldsymbol{\varphi}^tA_{\mathcal B,0}'^{-1}\b_0,
    \qquad
    \bkappa_{\mathcal B}:=\boldsymbol{\varphi}^t
    \left(\alpha_j-\v^tA_{\mathcal B,0}'^{-1}\b_0\right)
    A_{\mathcal B,0}'^{-1}\u,
    \qquad
    \bnu_{\mathcal B}:=\v^tA_{\mathcal B,0}'^{-1}\u,
    $$
    it follows that
    $$
    g_c^{\varepsilon(j)}(\lambda)=\btheta_{\mathcal B}+\frac{\lambda \bkappa_{\mathcal B}}{1+\lambda \bnu_{\mathcal B}}.
    $$
\end{proof}

For an interval $I_j$, Theorem \ref{th:g_cpiece} indicates that the value function $g_c^{\varepsilon(j)}(\lambda)$ takes on a homographic form. More precisely, as long as the same basis $\mathcal B$ remains optimal, $g_c^{\varepsilon(j)}(\lambda)$ is determined by only three parameters, namely $\btheta_{\mathcal B}$, $\bkappa_{\mathcal B}$, and $\bnu_{\mathcal B}$. Thus, for each basis, the expression of the value function over the interval where this basis is optimal is directly obtained, which makes its computation very efficient. Indeed, starting from an optimal basis at a given $\lambda_0$, the solution can be computed until optimality is lost, which determines the next breaking point and the next basis. At each basis change, only the three parameters $(\btheta_{\mathcal B},\bkappa_{\mathcal B},\bnu_{\mathcal B})$ need to be updated to obtain the new expression. Since $\Lambda=\bigcup_{j=1}^p I_j$, where $p$ denotes the number of breakpoints of $f_c(\lambda)$, and since on each interval $I_j$ the function $g_c^{\varepsilon(j)}(\lambda)$ has a homographic representation as long as the same basis $\mathcal B$ remains optimal, it follows that $g_c^{\varepsilon}(\lambda)$ is a continuous piecewise homographic function on $\Lambda$.
However, the number of these intervals, and thus the number of pieces in this function, can grow exponentially in the worst case. 

\begin{theorem}[\citealp{Murty1980}]\label{th:murty}
    There exists a family of single parameter parametric linear programs $\P_c(\lambda)$ for which $\Lambda$ is partitioned into $2^n$ piecewise linear segments. Consequently, $\P_c(\lambda)$ may have an exponential number of breakpoints (equivalently, piecewise linear segments or basis changes) in the worst case. The same worst case phenomenon also holds for problem $\P_b(\lambda)$.
\end{theorem}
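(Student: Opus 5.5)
The plan is to exhibit an explicit construction in the spirit of Murty's, building the exponential family from a Klee--Minty/Goldfarb-type polytope. The goal is that the parametric simplex path visits every vertex as $\lambda$ sweeps through $\Lambda$. The $\P_b(\lambda)$ statement will then follow by duality.

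First, fix a deformed cube $K\subset\mathbb{R}^n$ with $2^n$ vertices, for instance the Goldfarb cube, obtained by induction on $n$. From a cube in the first $k$ coordinates, the cube in $k+1$ coordinates is built by adding the constraints $\delta x_k\le x_{k+1}\le 1-\delta x_k$, or a projective variant of these. I will also fix two cost directions $\c$ and $\d$ so that, projected onto the plane spanned by $(\c^t\x,\d^t\x)$, all $2^n$ vertices lie on the boundary of the projected polygon. This is the Goldfarb property: the shadow of the cube onto a suitable 2D plane has $2^n$ vertices.

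Second, I will relate the breakpoints of $f_c(\lambda)=\min_{\x\in K}(\c+\lambda\d)^t\x$ to this shadow polygon. By Theorem \ref{th:concave}, $f_c$ is concave and piecewise linear, and it equals the lower envelope of the linear functions $\lambda\mapsto \c^t\x_v+\lambda\d^t\x_v$, where $\x_v$ ranges over the vertices of $K$. A vertex $\x_v$ contributes a linear piece exactly when the point $(\d^t\x_v,\c^t\x_v)$ is a vertex of the lower-left boundary chain of the projected polygon. Equivalently, $\x_v$ is the unique minimizer for some direction $(1,\lambda)$ with $\lambda\in\Lambda$. Hence the number of linear segments equals the number of shadow vertices on the relevant chain. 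The task is then to choose the projection so that all $2^n$ vertices appear on one monotone chain, up to constants on the count, for example by taking $\Lambda=\mathbb{R}$ and arranging that the chain covers the full boundary. The inductive argument goes as follows. The shadow of the $(k+1)$-cube consists of the shadow of the bottom facet traversed in one direction, followed by the shadow of the top facet, which is a reflected copy, traversed in reverse. This doubles the chain length at each step.

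The main obstacle is this inductive shadow-doubling step. One must verify that the perturbation $\delta$ is small enough that the concatenated chains remain convex, i.e. the slopes stay monotone at the junction between the bottom and top copies. I would first try the standard Goldfarb parameters ($\delta<1/2$, with a second parameter $\gamma<\delta/4$ scaling the lifted coordinate). Then I would check convexity of the concatenated slope sequence by an explicit computation of the slopes of consecutive shadow edges. Finally, the constraints must be put in the form $A\x\le\b$ with $\x\ge0$; this is immediate, since the cube lies in the nonnegative orthant.

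For $\P_b(\lambda)$, I will use duality. $f_b(\lambda)$ is the optimal value of $\Q_b(\lambda)$, whose feasible set is fixed and whose objective is $(\b+\lambda\d)^t\y$. This is exactly a cost-perturbation problem on the polyhedron $K^D$. I will therefore build the exponential instance directly for the dual: choose $A^t,\c$ so that $K^D$ (after the sign change $\y\mapsto-\y$, to match $\y\le0$) is the Goldfarb cube, and take the same $\b,\d$. By strong duality, $f_b$ inherits the $2^n$ linear pieces, and each breakpoint corresponds to a change of optimal basis in $\P_b(\lambda)$.
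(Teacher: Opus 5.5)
\textbf{Gap: the cube you write down has no exponential shadow.} The paper gives no proof of this statement; it only cites \citet{Murty1980}. Your overall route is sound and would be an explicit, self-contained alternative. The linear pieces of $f_c$ on $\Lambda=\mathbb{R}$ correspond to the vertices of one monotone chain of the shadow of $K$ in the $(\d^t\x,\c^t\x)$-plane, so a polytope with an exponential shadow suffices. Your duality transfer to $\P_b(\lambda)$, making $-K^D$ that polytope, is also correct.

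The problem is the construction itself. The recurrence you write, $\delta x_k\le x_{k+1}\le 1-\delta x_k$, is the Klee--Minty cube, and for it the doubling step fails outright. On the bottom facet $x_{k+1}=\delta x_k$, so that facet projects to the $(x_k,x_{k+1})$-plane as the segment $\{(t,\delta t)\}$, not as a copy of the previous shadow. The whole cube then projects to a quadrilateral, and your planned slope computation would only find collinear points. The missing idea is Goldfarb's coupling to the coordinate before, $\delta(x_k-\gamma x_{k-1})\le x_{k+1}\le 1-\delta(x_k-\gamma x_{k-1})$. With it, the bottom facet projects onto the image of the previous shadow $P_k$ (in the $(x_{k-1},x_k)$-plane) under the nonsingular map $(a,b)\mapsto(b,\delta(b-\gamma a))$. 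The top facet projects onto its reflection in the second coordinate. Your ``second parameter scaling the lifted coordinate'' does not supply this coupling.

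\textbf{How the repaired induction goes.} Once the coupling is in place, the junction convexity you single out as the main obstacle is automatic; the real work is elsewhere. Carry three facts about $P_k$ through the induction:
\begin{itemize}
\item its side $x_{k-1}=0$ is the single edge from $(0,0)$ to $(0,1)$;
\item these two points are the unique minimizer and maximizer of $x_k$;
\item $x_k-\gamma x_{k-1}>0$ on $P_k\setminus\{0\}$.
\end{itemize}
The last condition is where a constraint like $\gamma<\delta/4$ enters, and it also keeps the cube in $\x\ge 0$.

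Since $\gamma,\delta>0$, the map above sends the vertices of $P_k$ that are extreme for functionals with positive $x_{k-1}$-coefficient onto the lower chain of the bottom copy. By the first invariant, that is all $2^k$ of them. For $\delta<1/2$ the two copies are vertically separated, with heights at most $\delta$ versus at least $1-\delta$. They are joined by the vertical edges $x_k=0$ and $x_k=1$. Hence $P_{k+1}$ is a $2^{k+1}$-gon satisfying the same invariants.

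Finally, take $\c=-\mathbf{e}_{n-1}$ and $\d=\mathbf{e}_n$. Then the minimizer of $(\c+\lambda\d)^t\x$ runs through all $2^n$ vertices as $\lambda$ sweeps $\mathbb{R}$. This gives exactly $2^n$ linear pieces, as the statement claims, rather than only a constant fraction.
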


Consequently, although Theorem \ref{th:g_cpiece} provides a local description of $g_c^{\varepsilon(j)}(\lambda)$ on each interval where the optimal basis is fixed, a complete piecewise analysis may require solving an exponential number of subproblems in the worst case. This observation motivates a global approach. Rather than following the solution basis by basis, one can reformulate the problem over the entire parameter domain, as established in Observation \ref{obs:equivalentA}.

\begin{observation}\label{obs:equivalentA}
    \eqref{eq:g_problem_c_final} can be equivalently rewritten as a linear program in which the parameter $\lambda$ appears only on the left-hand side of the constraints $(\P_A(\lambda))$. This is followed by introducing an auxiliary variable $w$ such that $w=1$, leading to the formulation
    \begin{equation}
        \begin{aligned}
            g_c^\varepsilon(\lambda)=\min_{\x,w}\;& \boldsymbol{\varphi}^t \x\\
            \text{s.t.}\;& (A' + \lambda D')
            \begin{pmatrix}
                \x\\
                w
            \end{pmatrix}
            \le \b'\\
            & \x,w \ge 0,
        \end{aligned}
    \end{equation}
    with $$A' =
    \begin{pmatrix}
    A & 0\\
    \c & 0\\
    \c & 0 \\
    \vdots & \vdots \\
    0 &1\\
    0 &-1
    \end{pmatrix},
    \quad
    D' =
    \begin{pmatrix}
    0 & 0\\
    \d & -\alpha_1 \\
    \d & -\alpha_2 \\
    \vdots & \vdots \\
    0 & 0\\
    0 & 0
    \end{pmatrix},
    \quad
    \b' =
    \begin{pmatrix}
    \b \\
    \beta_1 + \varepsilon \\
    \beta_2 + \varepsilon\\
    \vdots \\
    1 \\
    -1
    \end{pmatrix}.$$
\end{observation}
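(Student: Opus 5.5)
The plan is to prove the equivalence by direct substitution, starting from the finite reformulation in Observation \ref{th:reformulationstep1}. Expand the block system $(A'+\lambda D')(\x,w)^t\le\b'$ row by row and check that, once $w=1$ is forced, each row matches exactly one constraint of \eqref{eq:g_problem_c_final}. The objective $\boldsymbol{\varphi}^t\x$ does not involve $w$, so the two problems then share the same objective on corresponding feasible sets and hence the same optimal value.

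The row-by-row check goes as follows.

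The first block has $D'$ equal to zero. It reads $A\x+0\cdot w\le\b$, which is exactly $A\x\le\b$.

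The $i$-th cost row, for $i=1,\dots,k$, reads
\[
\c^t\x+\lambda\d^t\x-\lambda\alpha_i w\le\beta_i+\varepsilon .
\]
With $w=1$ this becomes $(\c+\lambda\d)^t\x\le\alpha_i\lambda+\beta_i+\varepsilon$, the $i$-th constraint of \eqref{eq:g_problem_c_final}.

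The last two rows read $w\le1$ and $-w\le-1$, so together they give $w=1$. Nonnegativity $w\ge0$ is then redundant, and $\x\ge0$ carries over unchanged.

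Putting these together, the map $\x\mapsto(\x,1)$ is a bijection between the feasible set of \eqref{eq:g_problem_c_final} and the feasible set of the lifted problem, and it preserves the objective. The two optimal values therefore coincide for every $\lambda\in\Lambda$. Combined with Observation \ref{th:reformulationstep1}, this identifies $g_c^\varepsilon(\lambda)$ with the lifted problem. That problem has the form $\P_A(\lambda)$: the data $A',D',\b'$ and the cost $(\boldsymbol{\varphi},0)$ are all independent of $\lambda$, and $\lambda$ enters only through $D'$.

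The one point needing care is the role of $w$. The pieces $\alpha_i\lambda$ on the right-hand side have to be moved into the matrix, and this is only legitimate because $w$ is pinned to $1$. I would also note that the construction depends on the finitely many pieces $(\alpha_i,\beta_i)$ of $f_c$ being valid on all of $\Lambda$, which Theorem \ref{th:concave} guarantees. Finally, the notation in $A'$ and $D'$ should be read with $\c$ and $\d$ as row vectors $\c^t$ and $\d^t$.
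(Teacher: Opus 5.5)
Your proof is correct and follows the same construction the paper uses. You introduce an auxiliary variable $w$ pinned to $1$ by the rows $w\le 1$ and $-w\le -1$, so that each $\alpha_i\lambda$ moves into the matrix as $-\lambda\alpha_i w$, and then check row by row that $\x\mapsto(\x,1)$ is an objective-preserving bijection onto the feasible set of \eqref{eq:g_problem_c_final}. The paper states this construction without further verification, and your reading of $\c,\d$ in $A'$ and $D'$ as the row vectors $\c^t,\d^t$ is the right interpretation of its notation.
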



Observation \ref{obs:equivalentA} shows precisely that this selection problem $g_c^\varepsilon(\lambda)$ can be reformulated as the optimal value of a parametric problem of type $\P_A(\lambda)$ (i.e., where the parameter $\lambda$ appears in the constraint matrix, in an augmented space). In other words, even if only $\c$ is modified at the outset, the transition to a decision-oriented analysis (via $g^\varepsilon_c(\lambda)$) reveals a perturbation on the left-hand side.

This observation provides a natural hierarchy of difficulty. When focusing only on the value function associated with $\P_c(\lambda)$, it comes back to the classical framework of objective perturbations, with a piecewise linear concave value function and breakpoints at basis changes. On the other hand, for $g^\varepsilon_c(\lambda)$, the reformulation of type $\P_A(\lambda)$ implies potentially more complex behavior, similar to that studied for matrix perturbations. Locally (fixed basis), the expressions become rational, and the domains of validity are governed by feasibility and optimality conditions \citep[see Zuidwijk's local expression and validity intervals, Eq. \eqref{eq:zuidwijk_rational}, ][]{Zuidwijk2005LinearParametric}. Nevertheless, in this case, the local description is simpler. Indeed, although this analysis follows the same basis-by-basis logic as in Zuidwijk’s approach, by Theorem \ref{th:g_cpiece}, each local part is determined by only three scalar parameters and is given by a homographic function rather than by a more general rational expression.

From an algorithmic perspective, since $g_c^\varepsilon(\lambda)$ can be viewed as a $\P_A(\lambda)$ problem, the methods developed for matrix perturbations can be directly reused. In particular, the bounding schemes proposed by \citet{MiftariEtAl2024LHSBounding} provide lower and upper bounds over an interval of $\lambda$, and thus a practical method for approximating $g^\varepsilon_c(\lambda)$, as illustrated in Figure \ref{LagrangianMethodExample}.

\begin{figure}[H]
    \centering
    \begin{minipage}{0.48\textwidth}
    \centering
        \includegraphics[width=1\linewidth]{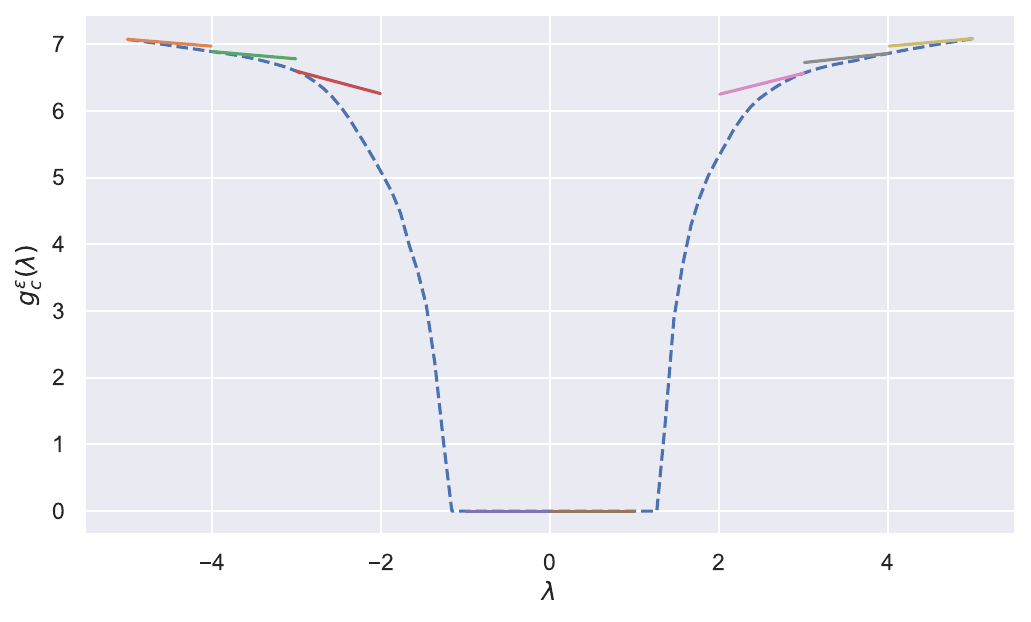}
    \end{minipage}
    \hfill
    \begin{minipage}{0.48\textwidth}
    \centering
        \includegraphics[width=1\linewidth]{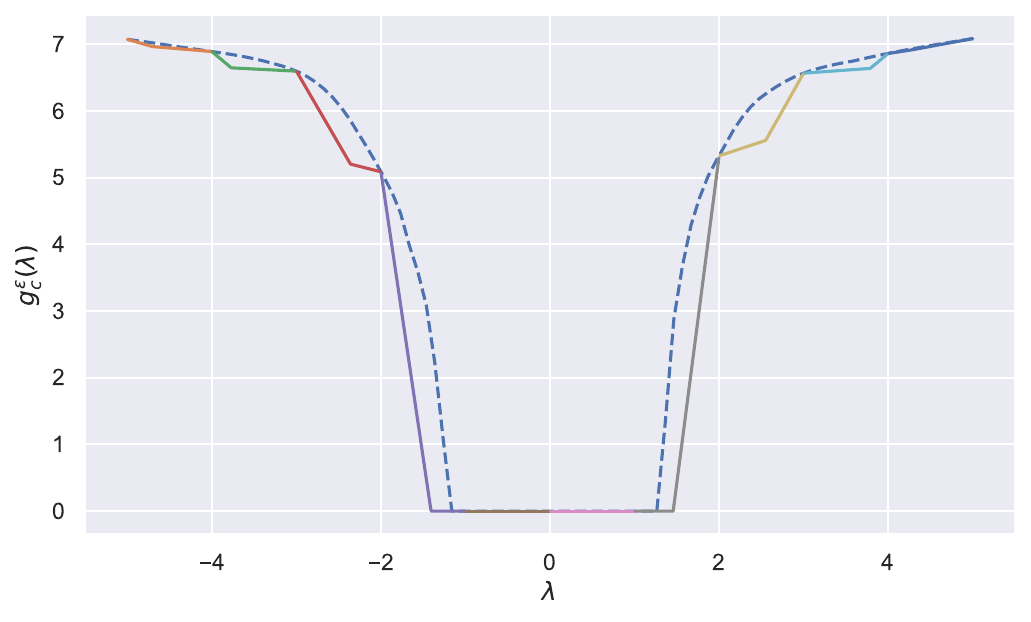}
    \end{minipage}
    \caption{Example of the solution using the bisegment coefficient Lagrangian method  \citep{MiftariEtAl2024LHSBounding}. The dotted blue curve shows the exact evolution of $g^\varepsilon_c(\lambda)$, while the colored segments correspond to the bounds constructed by the algorithm. On the left are the upper bounds, on the right are the lower bounds.}
    \label{LagrangianMethodExample}
\end{figure}

\subsection{Dual Form}\label{sub:dual_c}
\interp{In this case, the variable of interest is a dual one, such as the shadow price of the grid import constraint or the internal marginal value of electricity. The perturbation $\lambda\d$ again represents a change in economic conditions, such as a change in the electricity price applied to the microgrid.}{} 
In this section, $h_c(\lambda)$, the optimal value of the dual variables of interest for problem $\Q_c(\lambda)$ is analyzed. For a given vector of interest $ \boldsymbol{\varphi}$,

\begin{equation}
    \begin{aligned}
    h_c(\lambda) = \min_{\y}\quad & \boldsymbol{\varphi}^t\y\\
    \text{s.t.}\quad & \b^t \y \ge f_c(\lambda) \\
    & A^t \y \le (\c+\lambda \d)\\
    & \y \le 0.
    \end{aligned}
\end{equation}

Note that the constraint $\b^t \y \ge f_c(\lambda)$ involves the optimal primal value $f_c(\lambda)$. This is justified by \emph{strong duality} and because it is the maximum value of $\Q_c(\lambda)$. When both problems are feasible, the primal and dual optimal values coincide, so that $f_c(\lambda)$ can be considered as the optimal value of the dual problem $\Q_c(\lambda)$ for the same parameter $\lambda$. Moreover, observe that perturbing the cost vector $\c$ in the primal problem $\P_c(\lambda)$ corresponds to perturbing the right-hand side of the dual constraints $A^t\y\leq\c+\lambda\d$ in $\Q_c(\lambda)$.

\begin{theorem}\label{h_c_continu}
    The value function $h_c(\lambda)$ is continuous at every $\lambda \in \Lambda$, without adding a tolerance $\varepsilon$.
\end{theorem}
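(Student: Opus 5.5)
The plan is to reduce $h_c$, piece by piece, to a right-hand-side parametric LP, whose value is a maximum of finitely many affine functions, and then glue the pieces at the breakpoints of $f_c$. I assume throughout, as for $g_c$, that for every $\lambda\in\Lambda$ the problem $\Q_c(\lambda)$ has a finite optimum and $h_c(\lambda)$ is finite.

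\textbf{Step 1 (pieces of $f_c$).} By Theorem \ref{th:concave}, $\Lambda$ splits into finitely many closed intervals $I_j=[\lambda_j,\lambda_{j+1}]$ with $f_c(\lambda)=\alpha_j\lambda+\beta_j$ on $I_j$. On $I_j$ I would write
\[
h_c(\lambda)=\min_{\y}\{\boldsymbol{\varphi}^t\y \mid -\b^t\y\le -\beta_j-\alpha_j\lambda,\ A^t\y\le \c+\lambda\d,\ \y\le 0\},
\]
which is an LP of the form $\min\{\boldsymbol{\varphi}^t\y \mid M\y\le \mathbf q+\lambda\mathbf r\}$. The parameter now appears only in the right-hand side. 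Unlike the primal reformulation in Observation \ref{obs:equivalentA}, $\lambda$ never enters the matrix; this is the structural reason the dual behaves better than $g_c$.

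\textbf{Step 2 (each piece is continuous).} For such an LP, I would pass to its dual:
\[
\max_{\mathbf z\le 0}\{(\mathbf q+\lambda\mathbf r)^t\mathbf z \mid M^t\mathbf z=\boldsymbol{\varphi}\}.
\]
Its feasible polyhedron $Z$ does not depend on $\lambda$, and it is nonempty because $h_c$ is finite somewhere on $I_j$. For each $\lambda\in I_j$ the primal piece is feasible, since $T_c(\lambda)\neq\varnothing$, and its value is finite. So by strong duality the maximum is attained at a vertex of $Z$. Because $\mathbf z\le 0$, $Z$ is pointed, so vertices exist. Hence
\[
h_c(\lambda)=\max_{\mathbf z\in\mathrm{vert}(Z)}(\mathbf q+\lambda\mathbf r)^t\mathbf z,
\]
a maximum of finitely many affine functions of $\lambda$. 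It is therefore convex, piecewise linear, and continuous on all of the closed interval $I_j$, endpoints included.

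The main obstacle is this endpoint issue. A general convex function on a closed interval may jump at the boundary, and an LP value function can jump to $+\infty$ where feasibility is lost. Both problems are excluded here: feasibility holds on all of $I_j$, and the finite vertex representation gives a genuinely finite max-affine function. I also need $Z$ to be independent of $\lambda$, which holds because $\lambda$ enters only through $\mathbf q+\lambda\mathbf r$.

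\textbf{Step 3 (gluing).} At a breakpoint $\lambda_{j+1}$ shared by $I_j$ and $I_{j+1}$, both linear pieces equal $f_c(\lambda_{j+1})$. So the two local LPs of Step 1 have identical constraints there and both equal the true $h_c(\lambda_{j+1})$. The left and right limits therefore agree with the value, and $h_c$ is continuous on $\Lambda$ with no tolerance $\varepsilon$. As a by-product, $h_c$ is piecewise, over the intervals $I_j$, a convex piecewise-linear function, consistent with Table \ref{tab:placeholder}.
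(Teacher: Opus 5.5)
Your proposal is correct and follows essentially the same route as the paper. It restricts to the pieces $I_j$ of $f_c$, notes that each local problem is a right-hand-side parametric LP and hence continuous on the closed interval, and glues at the breakpoints using $\alpha_j\bar\lambda+\beta_j=\alpha_{j+1}\bar\lambda+\beta_{j+1}=f_c(\bar\lambda)$. The only difference is in Step~2: you prove per-piece continuity directly, through the $\lambda$-independent pointed polyhedron $Z$ and its finitely many vertices, which also settles the endpoints of $I_j$, whereas the paper simply invokes the analogy with $f_b$ in Theorem~\ref{th:convexe}.
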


Theorem \ref{h_c_continu} is demonstrated later in this section, but it can be seen geometrically. Indeed, in the case of variable analysis on $\P_c(\lambda)$ (see Section \ref{sub:continue}), the objective vector $(\c+\lambda\d)$ varies linearly with $\lambda$.  The level hyperplanes $\bigl\{\x \mid (\c+\lambda\d)^t\x \le f_c(\lambda)\bigr\}$ therefore change their normal and rotate around the polytope. Conversely, in this case, looking at the dual, the hyperplane is on the constraints and translates parallel to them. More precisely, the realizable dual set depends on $\lambda$ through the right-hand side. The feasible polytope of problem $\Q_c(\lambda)$ can be written as $K_c^{D}(\lambda) := \bigl\{\, \y \in \mathbb{R}^m \mid \ y \le 0,\ A^t\y \le \c + \lambda \d \,\bigr\}$. Each inequality $\a_i^t\y \le c_i + \lambda d_i$ retains the same normal vector $\a_i$ and therefore defines a support hyperplane that translates parallel to itself when $\lambda$ varies. Furthermore, the constraint $\b^t\y \ge f_c(\lambda)$ does not pivot either. Indeed, its normal vector is $\b$, which is independent of $\lambda$, only the threshold $f_c(\lambda)$ varies. This means that geometrically, the feasible polytope $ K_c^D(\lambda)$ deforms by translation of the faces (rather than by rotation of the objective). In this context, the optimal value typically evolves continuously, and the basic changes correspond to moments when a constraint becomes active/inactive. There can be no switching due to the rotation of the objective, which does not produce discontinuity. Figure \ref{fig:casetranslation} illustrates the geometric intuition of the dual case.

\begin{figure}[h]
    \centering
    \begin{subfigure}[c]{0.62\linewidth}
        \centering
        \begin{subfigure}[c]{0.45\linewidth}
            \centering
            \includegraphics[scale=0.35]{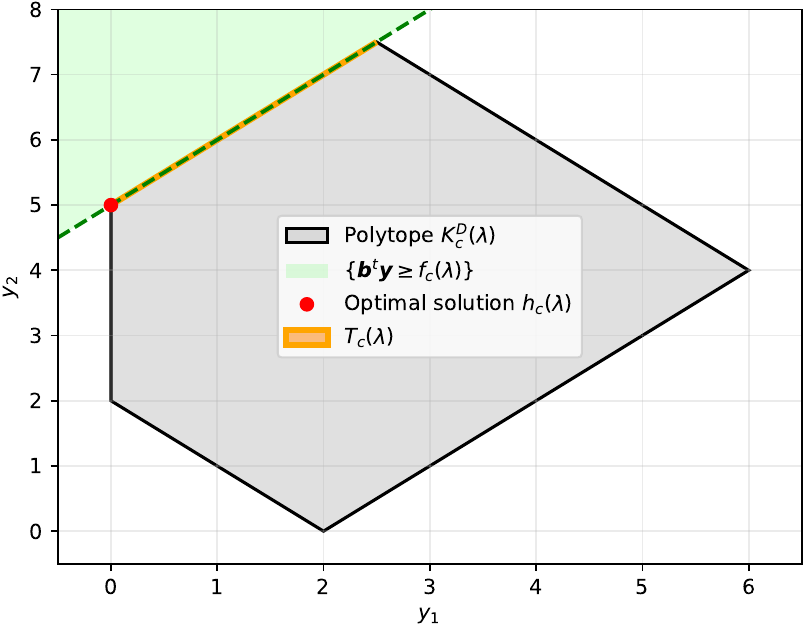}
        \end{subfigure}\hfill
        \begin{subfigure}[c]{0.45\linewidth}
            \centering
            \includegraphics[scale=0.35]{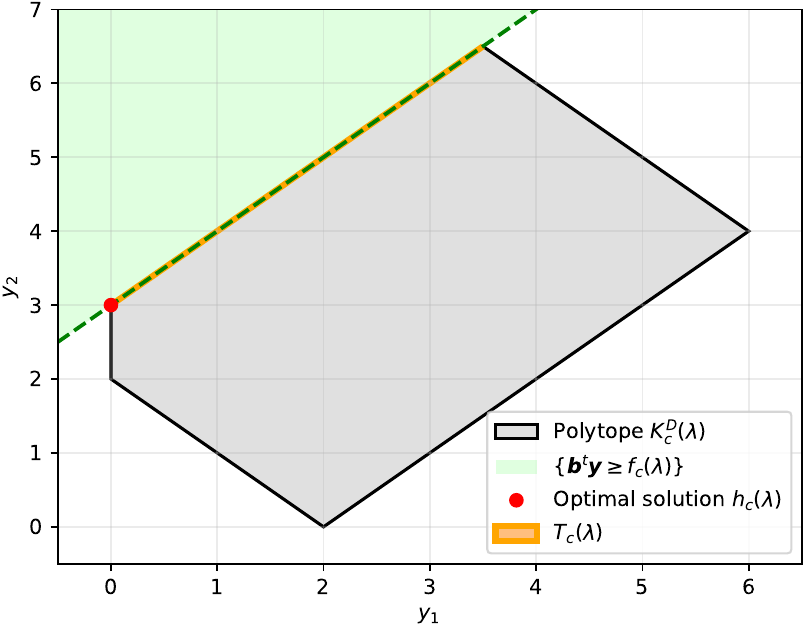}
        \end{subfigure}
        \caption{The dual feasible polytope $K_c^D(\lambda)$ and $\b^t\y \geq f_c(\lambda)$ are shown for $\lambda_1 = 2$  and $\lambda_2 = 6$. Between the two values, each constraint boundary translates parallel to itself, displacing the set $T_c(\lambda)$ and the selected solution $h_c(\lambda)$.}
    \end{subfigure}
    \hfill
    \begin{subfigure}[c]{0.32\linewidth}
        \centering
        \includegraphics[scale=0.35]{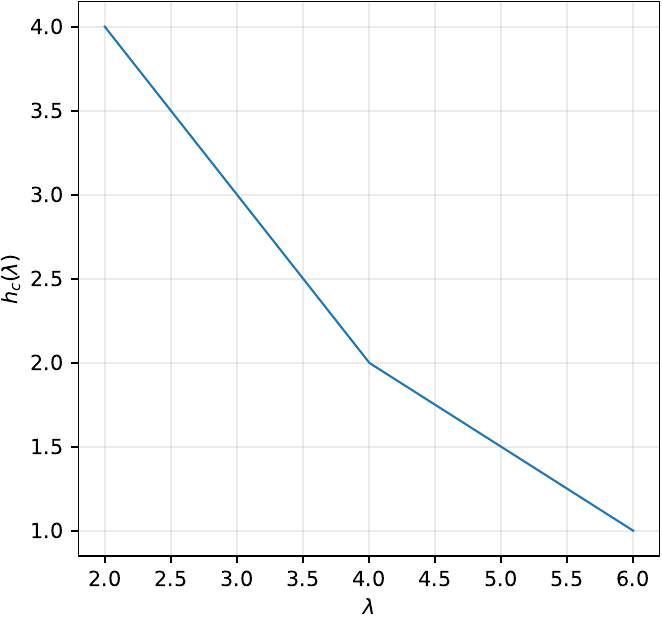}
        \caption{Representation of the value function $h_c(\lambda)$.}
    \end{subfigure}
    \caption{Geometric illustration of the $h_c(\lambda)$ in the dual problem $\Q_c(\lambda)$. Unlike the primal case (see Section \ref{sub:primal_c}) where the hyperplane of the objective $\{\x\mid (\c+\lambda \d)^t \x \le f_c(\lambda)\}$ rotates around the feasible polytope as $\lambda$, perturbing $\c$ in the dual shifts the right-hand side of the constraints $ \ A^t\y \le \c + \lambda \d$. Each constraint translates parallel to itself, and the constraint $\b^t\y \geq f_c(\lambda)$ also translates, since its normal $\b$ is independent of $\lambda$. This translational deformation of $K_c^D(\lambda)$ underlies the continuity of $h_c(\lambda)$ without the addition of tolerance $\varepsilon$.}
    \label{fig:casetranslation}
\end{figure}

By Theorem \ref{th:concave}, $f_c(\lambda)$ is a continuous, concave, piecewise linear function of $\lambda$ and can be represented as $f_c(\lambda)=\min \{\alpha_i \lambda + \beta_i \mid i \in 1...k \}, \ \forall \lambda \in \Lambda$. In the primal analysis, constraints of the form $(\c+\lambda \d)^t \x \le f_c(\lambda)$ can be rewritten as a finite family of linear inequalities since $t \le \min u_i \Longleftrightarrow t \le u_i\ \ \forall i$. In the dual problem, however, $f_c(\lambda)$ appears in the reverse inequality $\b^t \y \ge f_c(\lambda)$, and one cannot apply the same transformation globally. Indeed,  $t \ge \min u_i \ \centernot\Longleftrightarrow t \ge u_i\ \ \forall i$. Consequently, the dual analysis is performed interval by interval. 
As $f_c(\lambda)$ is a continuous concave piecewise linear function, there exists a finite partition of the parameter domain: $\Lambda=\bigcup_{j=1}^p I_j$ where $I_j=[\lambda_j,\lambda_{j+1}]$ for $p$ piecewise of $f_c(\lambda)$. On each interval $I_j$ a single affine piece is active $f_c(\lambda)=\alpha_j\lambda+\beta_j, \ \forall \lambda\in I_j$.
\begin{observation}\label{Obs:h_c_piece}
    Given  $f_c(\lambda)= \min \{ \alpha_j \lambda + \beta_j \mid j \in 1...p \}$, let an interval $I_j\in \Lambda$. For any $\lambda\in I_j$, the constraint $\b^t\y \ge f_c(\lambda)$ reduces to the linear constraint $\b^t\y \ge \alpha_j\lambda+\beta_j$, and define the corresponding subproblem \begin{equation}\label{eq:hc_piece}
        \begin{aligned}
        h_c^{(j)}(\lambda)=\min_{\y}\quad & \boldsymbol{\varphi}^t\y\\
        \text{s.t.} \quad
        & \b^t \y \ge \alpha_j \lambda + \beta_j\\
        & A^t \y \le \c+\lambda \d\\
        & \y \le 0,
        \end{aligned}
    \end{equation}
    By construction, $h_c(\lambda)=h_c^{(j)}(\lambda)$ for all $\lambda\in I_j$. On each interval, the problem is a linear program with right-hand sides varying affinely with $\lambda$, so $h_c^{(j)}(\lambda)$ has the same qualitative behavior as the value function $f_b(\lambda)$ of problem $\P_b(\lambda)$: it is a continuous piecewise linear convex function.
\end{observation}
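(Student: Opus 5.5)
The plan is to rewrite \eqref{eq:hc_piece} so that $\lambda$ appears only in the right-hand side, and then reuse the standard convexity argument for $f_b(\lambda)$ (Theorem \ref{th:convexe}). The first step is to justify $h_c(\lambda)=h_c^{(j)}(\lambda)$ on $I_j$. On $I_j$ we have $f_c(\lambda)=\alpha_j\lambda+\beta_j$ by definition of the partition, so the constraint $\b^t\y\ge f_c(\lambda)$ coincides pointwise with $\b^t\y\ge\alpha_j\lambda+\beta_j$. The two feasible sets are therefore identical for every $\lambda\in I_j$, and so are the minima of $\boldsymbol{\varphi}^t\y$. Feasibility holds because, for $\lambda\in\Lambda$, strong duality gives a dual optimum of value $f_c(\lambda)$.

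The second step is to write \eqref{eq:hc_piece} in the form
\[
\min\{\boldsymbol{\varphi}^t\y \mid M\y\le \mathbf{r}+\lambda\mathbf{s}\},
\]
where the rows of $M$ are $-\b^t$, $A^t$ and $I$, with $\mathbf{r}=(-\beta_j,\c,0)$ and $\mathbf{s}=(-\alpha_j,\d,0)$. This is exactly a right-hand-side parametric LP.

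The third step proves convexity directly. Take $\lambda,\lambda'\in I_j$ with optimal solutions $\y,\y'$, and let $\mu\in[0,1]$. The point $\mu\y+(1-\mu)\y'$ satisfies $M(\mu\y+(1-\mu)\y')\le \mathbf{r}+(\mu\lambda+(1-\mu)\lambda')\mathbf{s}$, by linearity. Its objective value is $\mu h_c^{(j)}(\lambda)+(1-\mu)h_c^{(j)}(\lambda')$, which gives $h_c^{(j)}(\mu\lambda+(1-\mu)\lambda')\le \mu h_c^{(j)}(\lambda)+(1-\mu)h_c^{(j)}(\lambda')$.

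The fourth step establishes piecewise linearity through LP duality. The dual of this LP is
\[
\max\{(\mathbf{r}+\lambda\mathbf{s})^t\mathbf{z} \mid M^t\mathbf{z}=\boldsymbol{\varphi},\ \mathbf{z}\le 0\},
\]
whose feasible set does not depend on $\lambda$. Whenever the primal value is finite, it equals the maximum of finitely many affine functions $(\mathbf{r}+\lambda\mathbf{s})^t\mathbf{z}_v$ over the vertices $\mathbf{z}_v$ of that fixed polyhedron. This makes it convex and piecewise linear, and hence continuous on the interior of $I_j$.

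The main obstacle is finiteness on all of $I_j$, together with continuity at the endpoints $\lambda_j,\lambda_{j+1}$. The primal feasible set is nonempty, but $\boldsymbol{\varphi}^t\y$ could in principle be unbounded below. I would argue that the recession cone of \eqref{eq:hc_piece} does not depend on $\lambda$. Consequently, unboundedness at one $\lambda$ would imply it at all $\lambda$, so $h_c^{(j)}$ is either finite throughout or $-\infty$ throughout; I would assume the first case, as the statement implicitly does. At the endpoints, a finite maximum of affine functions on a closed interval is continuous, including at its ends. Continuity of the overall $h_c$ across pieces (Theorem \ref{h_c_continu}) is a separate matter and is not needed for this observation.
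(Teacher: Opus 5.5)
Your proposal is correct and follows the paper's route: you identify $h_c$ with $h_c^{(j)}$ on $I_j$ because the active affine piece equals $f_c$ there, and then treat the subproblem as a right-hand-side parametric LP whose value function behaves like $f_b$. The only difference is that you prove the content of Theorem \ref{th:convexe} directly (convexity by convex combinations, piecewise linearity via the $\lambda$-independent, pointed dual polyhedron $\{\mathbf{z} \mid M^t\mathbf{z}=\boldsymbol{\varphi},\ \mathbf{z}\le 0\}$), and you make explicit, through the $\lambda$-independent recession cone, the finiteness assumption that the paper leaves implicit.
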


Over the entire domain, $h_c(\lambda)$ is obtained by repeating this analysis on each affine part of $f_c(\lambda)$. Therefore, $h_c(\lambda)$ is a piecewise function composed of piecewise linear convex functions. The breakpoints can come from (i) the breakpoints of $f_c(\lambda)$ (when the pair $(\alpha,\beta)$ changes) or (ii) changes in the optimal basis in the perturbation subproblem of the RHS considered over a given interval. In particular, once the decomposition of $f_c(\lambda)$ into pieces is known, the calculation of $h_c(\lambda)$ reduces to solving a sequence of parametric linear programming subproblems, which can be efficiently solved. Nevertheless, Theorem \ref{th:murty} shows that, the number of breakpoints can be exponential in the worst case. 
Therefore, an exact "piece-by-piece" approach may require an exponential number of subproblems to be solved in the worst case. Figure \ref{fig:h_c_representation} illustrates the evolution of $h_c(\lambda)$ as a function of $\lambda$, while the dotted vertical lines mark the breakpoints of $f_c(\lambda)$, i.e., the values of $\lambda$ where the optimal value changes piecewise. It can be seen that the convex linear functions are between the pieces of $f_c(\lambda)$.\\

\begin{figure}[h]
    \centering    \includegraphics[width=0.6\linewidth]{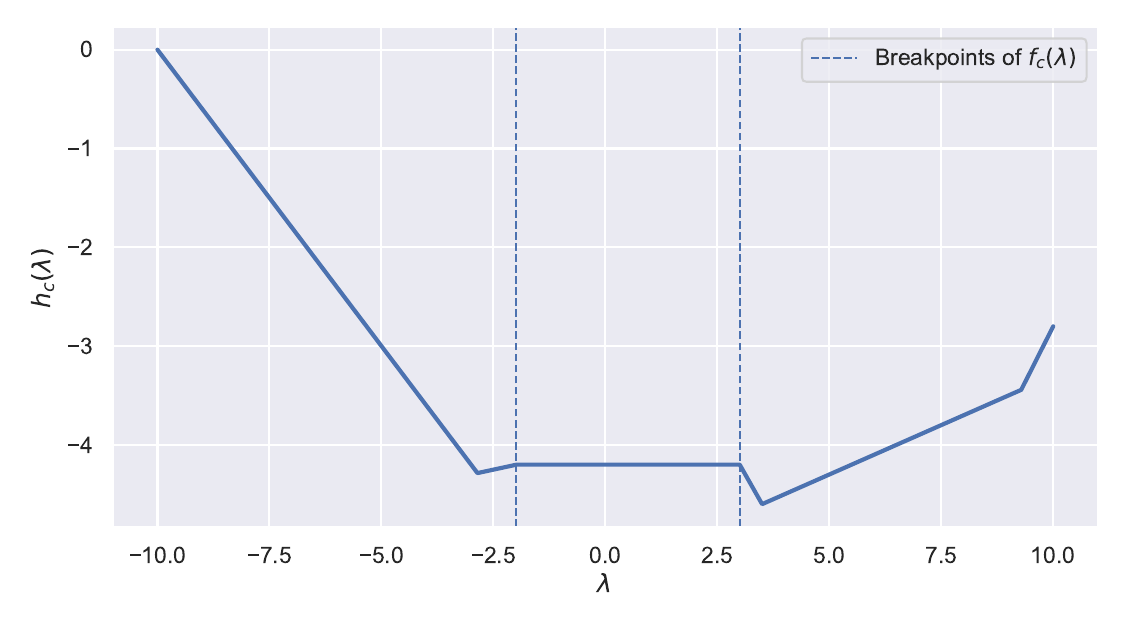}
    \caption{Example of $h_c(\lambda)$, a continuous piecewise function composed of piecewise linear convex functions.}
    \label{fig:h_c_representation}
\end{figure}

\begin{proof}[Proof of Theorem \ref{h_c_continu}]
    Let us prove that $h_c(\lambda)$ is continuous. According to Observation \ref{Obs:h_c_piece},
     $h_c(\lambda)=h_c^{(j)}(\lambda)$ for all $\lambda\in I_j$. It suffices to demonstrate that: (i) $h_c^{(j)}(\lambda)$ is continuous on $I_j$, and (ii) the pieces are continuously connected at the breakpoints. Previously, it has been shown that $h_c^{(j)}(\lambda)$ is analogous to $f_b(\lambda)$, which is continuous. Let us demonstrate that at the breakpoints, the pieces are continuously reconnected.\\
    Let $\bar\lambda=\lambda_{j+1}$ be a breakpoint of $f_c(\lambda)$.
    Since $f_c(\lambda)$ is continuous, $\alpha_j \bar\lambda+\beta_j = f_c(\bar\lambda)=\alpha_{j+1}\bar\lambda+\beta_{j+1}$. At $\lambda=\bar\lambda$, $h_c^{(j)}(\bar\lambda)$ and $h_c(\lambda)=h_c^{(j)}(\bar\lambda)$ impose exactly the same constraint $\b^t\y\ge f_c(\bar\lambda)$, so describe the same problem. Thus
    $h_c^{(j)}(\bar\lambda)=h_c^{(j+1)}(\bar\lambda)=h_c(\bar\lambda)$. Since $h_c^{(j)}$ is continuous on $I_j$ and $h_c^{(j+1)}$ is continuous on $I_{j+1}$, so 
    $\lim_{\lambda\rightarrow \bar\lambda} h_c(\lambda) = h_c^{(j)}(\bar\lambda)= h_c(\bar\lambda), \qquad \lim_{\lambda\rightarrow  \bar\lambda} h_c^{(j+1)}(\lambda) = h_c^{(j+1)}(\bar\lambda) = h_c(\bar\lambda)$. $h_c(\lambda)$ is continuous in $\bar\lambda$, since there is only a finite number of breakpoints, $h_c(\lambda)$ is continuous in $\Lambda$.
\end{proof}

\subsection*{Illustration (Example \ref{example_microgrid} : Microgrid)}
In this subsection, let us return to Example \ref{example_microgrid} to illustrate the properties of cost vector perturbations $\c$ defined in Section \ref{section:c}. Figure \ref{fig:microgrid_c} shows the functions $f_c(\lambda)$, $g_c(\lambda)$, $g^\varepsilon_c(\lambda)$, and $h_c(\lambda)$ obtained for the microgrid example when $\lambda$ pertubs the grid electricity price.\\
The upper subfigure is consistent with Theorem \ref{th:concave}. Indeed, the optimal value function $f_c(\lambda)$ is continuous, concave, and piecewise linear over the parameter interval. The middle subfigure represents the primal $g_c(\lambda)$, which, in this case, is associated with the minimum total import over the set of optimal solutions. As predicted by the analysis in Section \ref{sub:continue}, $g_c(\lambda)$ is discontinuous. Therefore, we have an illustration of the case where adding a tolerance $\varepsilon$ leads to the homographic form of $g^\varepsilon_c(\lambda)$, discussed in Section \ref{sub:reformulation}. In contrast,  the lower subfigure shows that the dual $h_c(\lambda)$ depends continuously on $\lambda$. Its profile is the continuous piecewise function composed of piecewise linear convex functions as demonstrated in Section \ref{sub:dual_c}. Here, $h_c(\lambda)$ represents the minimum shadow price of electricity associated with constraints related to the network import limit that can be reached while remaining at optimal costs during the perturbation of vector $\c$.

\begin{figure}[h]
    \centering
    \includegraphics[width=0.75\linewidth]{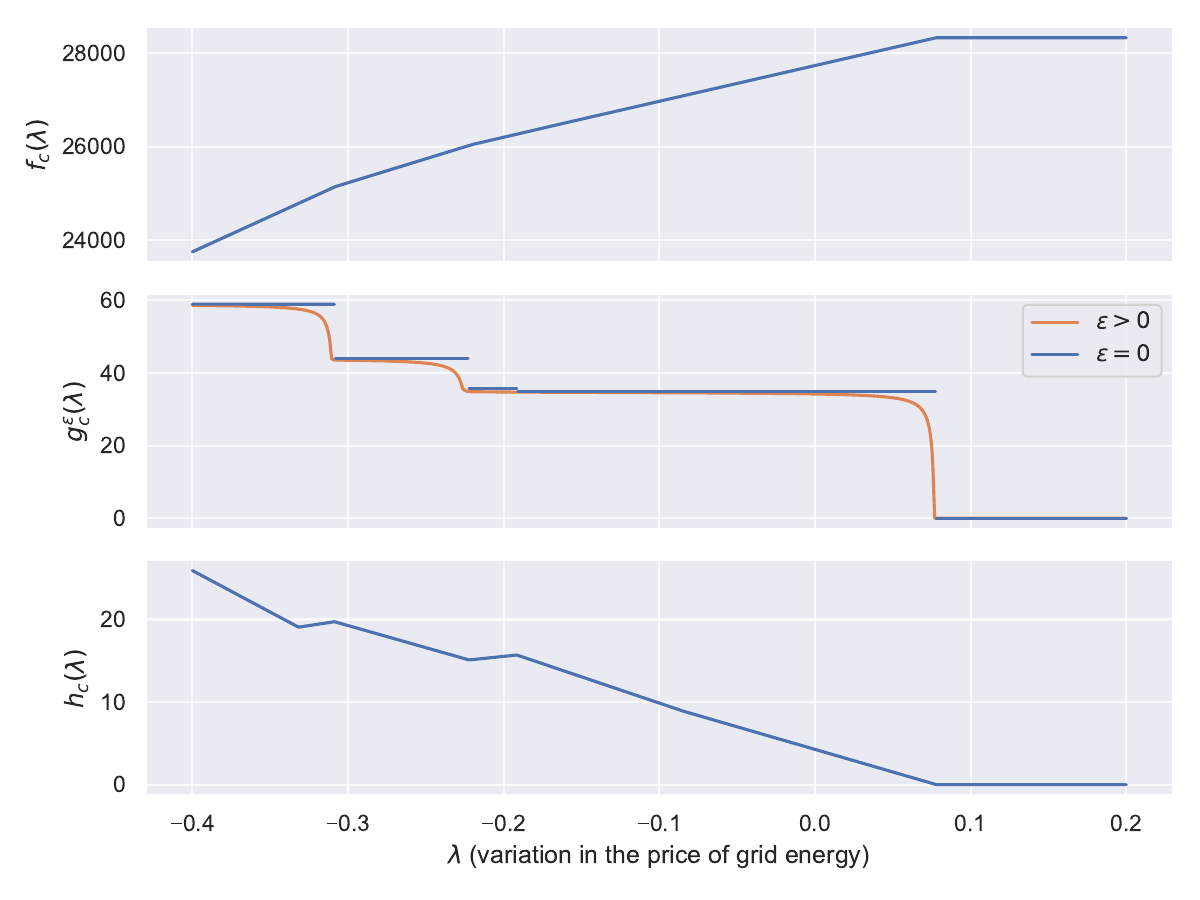}
    \caption{Sensitivity profiles for the cost perturbation $\P_c(\lambda)$ (variation in the price of grid energy) in Example \ref{example_microgrid}. Top: optimal value function $f_c(\lambda)$, continuous, concave, and piecewise linear. Middle: primal $g_c(\lambda)$ and $g^\varepsilon_c(\lambda)$, defined as the minimum total grid import over the set of optimal solutions. Bottom: dual $h_c(\lambda)$, defined as the minimum average shadow price associated with the grid import limit constraints.}
    \label{fig:microgrid_c}
\end{figure}
\section{Sensitivity, when modifying $\b$}
\label{section:b}

\interp{In this case, the interest $\boldsymbol{\varphi}^t\x$ is associated with the decision variable of the microgrid, for example, the installed battery capacity or the total electricity imported from the grid. The perturbation $\lambda \d$ in the right-hand side corresponds to a change in a demand or an available resource, such as electricity demand or the maximum import capacity of the grid connection.}{ Primal}
\interp{For the dual variables, one may choose $\boldsymbol{\varphi}^t\y$ to study the marginal price of electricity or the value of relaxing a capacity constraint. The perturbation $\lambda \d$ remains a right-hand side perturbation, corresponding, for example, to a variation in demand or in the available import capacity of the external grid.}{ Dual}

Now that the analysis of the variation of the objective function $\c$ is complete, the variation of the right-hand side term $\b$ can be analyzed. The analysis of right-hand side $\b$ perturbations follows a structure analogous to the one developed in Section \ref{section:c} for cost vector $\c$ perturbations, but with a primal-dual asymmetry that is exactly reversed. This reversal is a direct consequence of the duality correspondence between the two problems. Perturbing $\b$ in  $\P_b(\lambda)$ corresponds to perturbing the objective vector of dual $\Q_b(\lambda)$, in the same way that perturbing $\c$ in $\P_c(\lambda)$ corresponds to perturbing the right-hand side of the dual constraints in $\Q_c(\lambda)$. In this case, the following well-known theorem holds, which is very close to Theorem \ref{th:concave} \citep[see, for example,][]{berkelaar1997optimal, Bertsimas1997IntroductionTL}.

\begin{theorem}\label{th:convexe}
Let $\Lambda\subseteq\mathbb{R}$ be an interval such that the problem $\P_b(\lambda)$ has a finite optimal value for every $\lambda\in\Lambda$. Then $f_b(\lambda)$, the optimal value function of $\P_b(\lambda)$ is continuous, convex, and piecewise linear on $\Lambda$.
\end{theorem}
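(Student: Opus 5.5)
The plan is to reduce the statement to Theorem \ref{th:concave} via LP duality, and then to back this up with a direct convexity argument.

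\emph{Duality step.} For every $\lambda\in\Lambda$ the primal $\P_b(\lambda)$ has a finite optimal value. Strong duality then gives that the dual $\Q_b(\lambda)$ is feasible and
\[
f_b(\lambda)=\max_{\y\in K^D}(\b+\lambda\d)^t\y .
\]
The key point is that the dual feasible set $K^D$ does not depend on $\lambda$. Only the objective changes, and it does so affinely in $\lambda$.

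\emph{Piecewise linearity.} The set $K^D$ is a nonempty polyhedron. It has at least one vertex, since the constraint $\y\le 0$ rules out lines, so $K^D$ is pointed. Let $\y^1,\dots,\y^N$ be these finitely many vertices. For each $\lambda\in\Lambda$ the maximum of the dual objective is finite, so it is attained at a vertex. Hence on $\Lambda$
\[
f_b(\lambda)=\max_{k}\bigl(\b^t\y^k+\lambda\,\d^t\y^k\bigr).
\]
This is a pointwise maximum of finitely many affine functions of $\lambda$. It is therefore convex, continuous, and piecewise linear, with finitely many pieces. This mirrors the proof of Theorem \ref{th:concave}, where a minimum of affine functions gives concavity. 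Equivalently, one can apply Theorem \ref{th:concave} to the dual rewritten as a minimization, $-f_b(\lambda)=\min(-\b-\lambda\d)^t\y$. That problem is a cost-perturbed LP over a fixed polyhedron, so $-f_b$ is concave and piecewise linear.

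\emph{Direct convexity check.} Independently of duality, convexity can be verified in the primal. Take $\lambda_1,\lambda_2\in\Lambda$ with optimal solutions $\x_1,\x_2$, and let $\mu\in[0,1]$. The point $\mu\x_1+(1-\mu)\x_2$ is nonnegative and satisfies
\[
A\bigl(\mu\x_1+(1-\mu)\x_2\bigr)\le \b+\bigl(\mu\lambda_1+(1-\mu)\lambda_2\bigr)\d ,
\]
so it is feasible for $\P_b(\mu\lambda_1+(1-\mu)\lambda_2)$. Its cost is $\mu f_b(\lambda_1)+(1-\mu)f_b(\lambda_2)$, which gives convexity. This argument also shows that the set of $\lambda$ where $\P_b(\lambda)$ is feasible is an interval, consistent with the hypothesis on $\Lambda$.

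\emph{Main obstacle.} The delicate step is justifying the finite max-of-vertices representation uniformly on $\Lambda$. For each $\lambda$ in $\Lambda$ the maximum is attained at some vertex, since the dual value is bounded there. For $\lambda$ outside $\Lambda$ the maximum may be unbounded, but this does not matter, because the formula is only claimed on $\Lambda$. Continuity includes the endpoints of $\Lambda$ if they belong to it, which the affine-max form handles automatically.
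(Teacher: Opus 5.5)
Your proof is correct. The paper gives no proof of its own and cites the result as well known (Berkelaar et al.; Bertsimas and Tsitsiklis). Your argument is essentially the standard textbook proof behind that citation: strong duality writes $f_b(\lambda)$ as a maximum of the finitely many affine functions $(\b+\lambda\d)^t\y^k$ over the vertices of the $\lambda$-independent dual polyhedron $K^D$, and convexity also follows directly from convex combinations of primal optimal solutions.
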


As a consequence, the primal and dual selection functions $g(\lambda)$ and $h(\lambda)$ described throughout Section \ref{section:c} are here consistently exchanged. Indeed, every property established for $g_c^\varepsilon(\lambda)$ holds for $h_b^\varepsilon(\lambda)$, and every property established for $h_c(\lambda)$ holds for $g_b(\lambda)$.

Specifically, $g_b(\lambda)$, as $h_c(\lambda)$ (see Section \ref{sub:dual_c}), is a piecewise function composed of piecewise linear convex functions, continuous even when $\varepsilon = 0$, obtained by applying the same interval analysis to each piece $I_j$ of $f_b(\lambda)$. Conversely, $h_b^\varepsilon(\lambda)$ has the same properties as $g_c^\varepsilon(\lambda)$ in Section \ref{sub:primal_c}, i.e, it may be discontinuous for $\varepsilon = 0$.  However, for $\varepsilon > 0$, it is a piecewise homographic continuous function and can be reformulated as a parametric problem of the type $\P_A(\lambda)$, making bounding schemes such as those of \citet{MiftariEtAl2024LHSBounding} directly applicable.

The proofs of all these results follow the same reasoning as those of Lemmas \ref{T:USC} and \ref{T:LSC}, Theorems \ref{Berge_maximum}, \ref{T:P}, and \ref{th:g_cpiece}, and Observation \ref{obs:equivalentA} in Section \ref{section:c}, applied to the case of a perturbation on the right-hand side $\b$. More details are presented in Appendix \ref{appendix:B}.

\subsection*{Illustration (Example \ref{example_microgrid}: Microgrid)}
In this subsection, Example \ref{example_microgrid} is revisited to illustrate the properties of the right-hand side perturbations $\b$ introduced in Section \ref{section:b}. Figure \ref{fig:microgrid_b} highlights the functions $f_b(\lambda)$, $g_b(\lambda)$, $h_b(\lambda)$ and $h^\varepsilon_b(\lambda)$ obtained for the microgrid example when $\lambda$ represents a perturbation of the grid import limit.
The upper subfigure shows the optimal value function $f_b(\lambda)$, which is continuous, convex, and piecewise linear. The middle subfigure represents the primal  $g_b(\lambda)$, the minimum installed battery capacity in all optimal solutions. The bottom subfigure represents the dual  $h^\varepsilon_b(\lambda)$, which is the minimum average internal marginal value of electricity in all optimal dual solutions.
It should be noted that, in the case of right-hand side perturbations, the asymmetry between primal sensitivity and dual sensitivity is reversed compared to the case of cost vector $\c$ perturbations. 
In fact, in this case, it follows that $g_b(\lambda)$ is a piecewise function composed of piecewise linear convex functions. $h^\varepsilon_b(\lambda)$ is a discontinuous function for $\varepsilon = 0$ and is a continuous piecewise homographic function for $\varepsilon > 0$. 
\begin{figure}[h]
    \centering
    \includegraphics[width=0.75\linewidth]{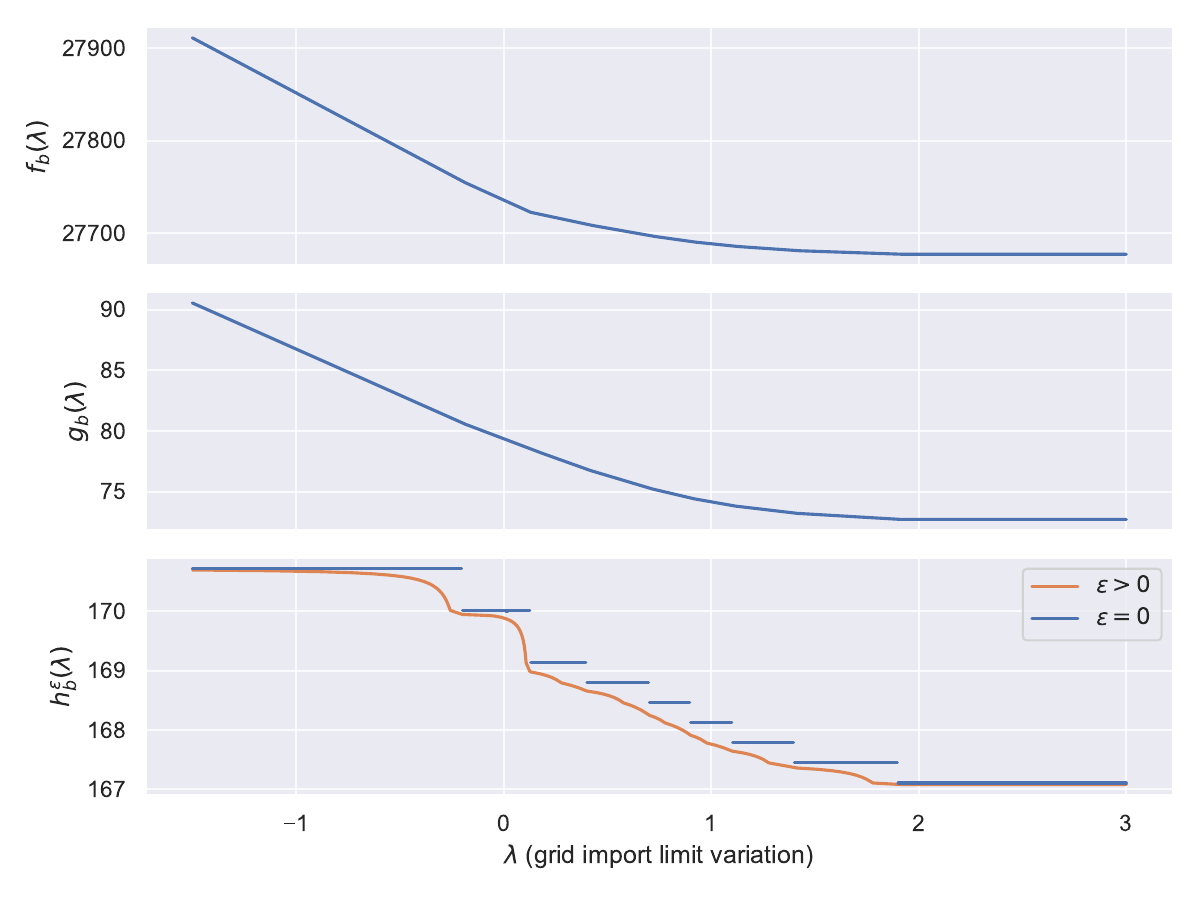}
    \caption{Sensitivity profiles for the right-side perturbation $\P_b(\lambda)$ in Example \ref{example_microgrid}. Top: value function $f_b(\lambda)$, continuous, convex, and piecewise linear. Middle: primal $g_b(\lambda)$, defined, in this case,  as the minimum installed battery capacity across all optimal solutions. Bottom: dual $h_b(\lambda)$ and $h^\varepsilon_b(\lambda)$, defined here as the minimum average marginal value of electricity across all optimal dual solutions.}
    \label{fig:microgrid_b}
\end{figure}

\section{Sensitivity, when modifying $A$}\label{section:A}

\interp{In this case, the primal variable of interest may be the installed battery capacity, the installed PV capacity, or the total imported energy, while the perturbation $\lambda D$ modifies a technical coefficient in the model, such as the battery efficiency or the conversion from installed PV capacity to hourly PV production.}{ Primal}
\interp{For the dual problem, one may study the marginal value of electricity, storage capacity, or grid access under a perturbation of the technical coefficients. In this case, $\lambda D $ represents a change in the physical performance of the microgrid components.}{ Dual}

Finally, consider perturbations of the constraint matrix $A$, problems of the form $\P_A(\lambda)$ where the parameter $\lambda$ appears on the left-hand side via $A+\lambda D$. This case is generally more delicate than perturbations of $\c$ or $\b$. Indeed, the variation of $A$ modifies the geometry of the feasible region itself, can induce basic singularities, and can generate more irregular parametric behavior.

Let $f_A(\lambda)$ denote the optimal value of the primal problem $\P_A(\lambda)$:
\begin{equation}\label{eq:fA_primal}
    \begin{aligned}
    f_A(\lambda) = \min_{\x}\quad & \c^{t}\x\\
    \text{s.t.}\quad & (A+\lambda D)\x \le \b\\
    & \x \ge 0.
    \end{aligned}
\end{equation}

By strong duality (whenever $\P_A(\lambda)$ is feasible and bounded), the dual optimal value coincides with the primal one. According to \citet{Zuidwijk2005LinearParametric} and \eqref{eq:zuidwijk_rational}, the optimal value function $f_A(\lambda)$ is a piecewise rational function of $\lambda$ (i.e., each local expression is a quotient of polynomials). Moreover, by Theorem \ref{Berge_maximum} and Observation \ref{obs:equivalentA}, $f_A(\lambda)$ may be discontinuous. Indeed, since $g_c(\lambda)$ can be written as an $A+\lambda D$ problem and has discontinuities, this means that in the worst case, $f_A(\lambda)$ also has discontinuities. This already contrasts with the cases studied for $\P_c(\lambda)$ and $\P_b(\lambda)$, where the value functions are continuous and piecewise affine.
As in the previous sections, the interest is not only in the optimal value $f_A(\lambda)$ but also in the selection of representative (quasi-)optimal solutions through a linear criterion $\boldsymbol{\varphi}$. Consider the following primal and dual selection problems :

\begin{equation}
    \begin{minipage}[c]{0.48\linewidth}
        \centering
        \vspace{0pt}
        $\begin{aligned}
        g^\varepsilon_A(\lambda) = \min_{\x} \quad & \boldsymbol{\varphi}^t\x\\
      \text{s.t.} \quad & \c^{t}\x \leq f_A(\lambda)+\varepsilon \\ & (A+\lambda D)\x\le \b \\
         & \x \geq 0 
        \end{aligned}$
    \end{minipage}
    \hfill
    \begin{minipage}[c]{0.48\linewidth}
        \vspace{0pt}
        \centering
        $\begin{aligned}
            h^\varepsilon_A(\lambda) = \min_{\y}\quad & \boldsymbol{\varphi}^t\y\\
            \text{s.t.}\quad & \b^t \y \ge f_A(\lambda)-\varepsilon \\
            & (A+\lambda D)^t \y \le \c\\
            & \y \le 0.
        \end{aligned}$
    \end{minipage}
\end{equation}

\begin{observation}
    Unlike $g_c^\varepsilon(\lambda)$, for which Theorem \ref{Berge_maximum} guarantees continuity for every $\varepsilon>0$, the function $g_A^\varepsilon(\lambda)$ may have discontinuities even when a tolerance $\varepsilon>0$ is introduced.
\end{observation}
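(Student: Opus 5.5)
The statement is an existence claim, so I will prove it with an explicit counterexample: a small $\P_A(\lambda)$ in which $g_A^\varepsilon$ jumps at some $\lambda_0$, however small $\varepsilon>0$ is. The key idea is that the continuity argument of Lemma \ref{T:LSC} needs two things. The feasible set must be fixed (or at least vary continuously), and the threshold $f(\lambda)+\varepsilon$ must be continuous. With matrix perturbations both can fail. Perturbing $A$ can make the feasible region $K_A(\lambda)$ itself lower semi-continuity-breaking: a constraint row $(\a_i+\lambda \d_i)^t\x\le b_i$ whose normal degenerates at $\lambda_0$ can suddenly admit, or exclude, an unbounded or far-away region. So I will look for a one- or two-variable example where $K_A(\lambda)$ shrinks abruptly as $\lambda$ crosses $\lambda_0$. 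Then no $\varepsilon$-slack in the objective can recover points that are simply infeasible for $\lambda\neq\lambda_0$.

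\textbf{Candidate construction.} Take $n=2$, objective $\c=(0,0)$, so that $f_A\equiv 0$ whenever the problem is feasible. Then the optimality constraint $\c^t\x\le f_A(\lambda)+\varepsilon$ is vacuous, and $g_A^\varepsilon(\lambda)=\min\{\boldsymbol{\varphi}^t\x : \x\in K_A(\lambda)\}$ for every $\varepsilon\ge0$. This reduces the claim to the discontinuity of an ordinary $\P_A$-type value function, which the paper already notes can occur via Observation \ref{obs:equivalentA} and the discontinuity of $g_c$. To keep everything self-contained, I will give a concrete instance. Use the constraints $\lambda x_1 \le 0$ (i.e. $A$ has a zero row and $D$ picks $x_1$), a box constraint $x_1\le 1$, and $x_1,x_2\ge0$ with $x_2\le 1$, and set $\boldsymbol{\varphi}=(-1,0)$. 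For $\lambda>0$ the first constraint forces $x_1=0$, so $g_A^\varepsilon(\lambda)=0$. At $\lambda=0$ it is vacuous, so $x_1=1$ is allowed and $g_A^\varepsilon(0)=-1$. The problem is feasible and bounded for all $\lambda$ in a neighborhood, and $\Lambda$ can be taken as an interval around $0$. If one prefers a nontrivial objective, I will choose $\c$ so that $f_A$ is constant, for example $\c=(0,1)$, which gives $f_A=0$ attained at $x_2=0$. I will then check that the $\varepsilon$-band $x_2\le\varepsilon$ does not affect the $x_1$-jump.

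\textbf{Steps in order.} First, verify feasibility and the value of $f_A(\lambda)$ on both sides of $\lambda_0=0$. Second, compute $g_A^\varepsilon(\lambda)$ for $\lambda>0$ and for $\lambda=0$ and exhibit the jump of size $1$, which is independent of $\varepsilon$. Third, explain where the proof of Theorem \ref{Berge_maximum} breaks down. In Lemma \ref{T:LSC} the point $x_0=(1,0)\in S_A^\varepsilon(0)$ has no nearby points in $S_A^\varepsilon(\lambda')$ for $\lambda'>0$, because $K_A(\lambda')$ itself fails to be lower semi-continuous at $0$. The $\varepsilon$-relaxation only enlarges the objective level set, not the feasible region.

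The only delicate point is making sure the example is not dismissed as degenerate, because the row $\lambda x_1\le0$ vanishes at $\lambda_0$. If a non-vanishing row is wanted, I would replace it by $x_2 + \lambda x_1 \le 0$, i.e. $\a=(0,1)$ and $\d=(1,0)$, together with $x_1\le1$. This gives the same jump, and the row stays nonzero.
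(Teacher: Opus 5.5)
Your counterexample is correct and follows the same route as the paper's Example \ref{exampleg_A}: a constraint row $\lambda x$ vanishes at $\lambda=0$, so the feasible region $K_A(\lambda)$ jumps there and no $\varepsilon$-slack on the objective can compensate. The one real difference is that you choose $\c$ so that $f_A$ is constant, which makes the $\varepsilon$-constraint irrelevant and shows that $g_A^\varepsilon$ can jump even when $f_A$ is continuous. In the paper's example, by contrast, $\boldsymbol{\varphi}$ coincides with the objective, so $g_A^\varepsilon$ simply reproduces the jump of $f_A$ itself; your instance also has the minor advantage of being stated directly in the paper's standard form ($\min$, $\le$, $\x\ge 0$).
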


\begin{example}\label{exampleg_A}
    Consider the following problem of $\P_A(\lambda)$:
    \begin{equation*}
        \begin{aligned}
            f_A(\lambda) =&  \max_{x} \; x \\
            \text{s.t.}\quad
            & \left\{
            \begin{aligned}
                x &\le 5\\
                \lambda x & \ge 0,
            \end{aligned}
            \right.
        \end{aligned}
    \end{equation*}
    \begin{enumerate}
        \item \textbf{Case $\lambda<0$} : the constraint $\lambda x\ge0$ forces $x\le0$, so $f_A(\lambda)=0$.
        \item \textbf{Case $\lambda\ge0$} : the constraint $\lambda x\ge0$ is redundant ($\lambda>0$) or trivial ($\lambda=0$), leaving $x\le5$ as the only active bound, hence $f_A(\lambda)=5$.
    \end{enumerate}
Combining both cases, we obtain
\begin{equation*}
    f_A(\lambda) =
    \begin{cases}
        0 & \text{if } \lambda<0\\
        5 & \text{if }\lambda\ge0.
    \end{cases}
\end{equation*}
Taking $\boldsymbol{\varphi}=\begin{pmatrix}
    1
\end{pmatrix}$,
\begin{equation*}
        \begin{aligned}
            g^\varepsilon_A(\lambda) =&  \max_{x} \; x \\
            \text{s.t.}\quad
            & \left\{
            \begin{aligned}
                x &\le 5\\
                \lambda x & \ge 0\\
                x &\geq f_A(\lambda) - \varepsilon.
            \end{aligned}
            \right.
        \end{aligned}
    \end{equation*}
\begin{enumerate}
    \item \textbf{Case $\lambda<0$} : the additional constraint implies that $x\ge f_A(\lambda)-\varepsilon=-\varepsilon$, so the feasible region is $-\varepsilon\le x\le0$. The maximum is attained at $x=0$, hence $g^\varepsilon_A(\lambda)=0$.
    \item \textbf{Case $\lambda\ge0$} : the additional constraint is $x\ge f_A(\lambda)-\varepsilon=5-\varepsilon$, leaving the feasible region $5-\varepsilon\le x\le5$. The maximum is attained at $x=5$, hence $g^\varepsilon_A(\lambda)=5$.
\end{enumerate}
Hence,
\[
g_A^\varepsilon(\lambda) =
\begin{cases}
0 & \lambda\leq0\\
5 & \lambda>0,\\
\end{cases}
\]
which, for any $\varepsilon\ge0$, has a discontinuity for $\lambda=0$. 

\end{example}
\vspace{3mm}

The discontinuity mentioned above is one of several aspects of the difficulties posed by the perturbations of $\P(A)$. A second, challenging problem concerns the reformulation of the selection problems themselves.
In Sections \ref{section:c} and \ref{section:b}, a key simplification comes from the piecewise affine structure of the parametric value functions, which allows the quasi-optimality constraints to be transformed into linear systems, either globally (via min/max manipulations) or interval by interval (where the value was affine). These reformulations do not directly extend to $\P_A(\lambda)$. Since $f_A(\lambda)$ is only known to be piecewise rational, limiting the analysis to a single "piece" does not automatically simplify the model.

Given the above difficulties, an alternative is to work with efficiently computable bounds of $f_A(\lambda)$. In particular, bounding schemes such as those proposed by \cite{MiftariEtAl2024LHSBounding} construct valid upper and lower bounds (possibly dependent on $\lambda$) for $f_A(\lambda)$ over an interval, generally in piecewise affine form. These bounds can then be used to construct approximations of the selection problems. For example, if $UB_{f_A(\lambda)}\ge f_A(\lambda)$ is an upper bound, the following model can be considered
\begin{equation}
    \begin{aligned}
    g^\varepsilon_A(\lambda) = \min_\x \quad &\boldsymbol{\varphi}^t\x\\
    \text{s.t.}\quad &\c^t \x \le  UB_{f_A(\lambda)}\\& (A+\lambda D)\x \le \b\\
    & \x \ge 0,
    \end{aligned}
\end{equation}
which restores a linear structure whenever $UB_{f_A(\lambda)}$ is piecewise affine. Symmetrically, for the dual selection problem, a lower bound gives a relaxed constraint $\b^t \y \ge LB_{f_A(\lambda)}$. Overall, perturbations of $A$ are the most complex case in this study. Indeed, even the value function $f_A(\lambda)$ is only piecewise rational and can be discontinuous, and decision-making inherits this complexity.

\subsection*{Illustration (Example \ref{example_microgrid}: Microgrid)}

Figure \ref{fig:microgrid_A} illustrates the functions $f_A(\lambda)$ and $g^\varepsilon_A(\lambda)$ obtained for the microgrid example when $\lambda$ disturbs a technical coefficient of the constraint matrix $(\P_A(\lambda))$. It is a perturbation on the efficiency of PVs. The upper subfigure shows the optimal value function $f_A(\lambda)$. Unlike the cases of cost and right-hand side perturbations, the matrix perturbation parameter leads to more irregular parametric behavior. As discussed in Section \ref{section:A}, the function $f_A(\lambda)$ is piecewise rational and may potentially have discontinuities. The lower subfigure represents the primal $g_A(\lambda)$ and $g_A^\varepsilon(\lambda)$, which is associated with the minimum total grid import over the set of optimal solutions. Figure \ref{fig:microgrid_A} illustrates the more complex behavior of the matrix perturbation case, with irregular parts and visible discontinuities, in accordance with the analysis developed in Section \ref{section:A}.

\begin{figure}[h]
    \centering
    \includegraphics[width=0.75\linewidth]{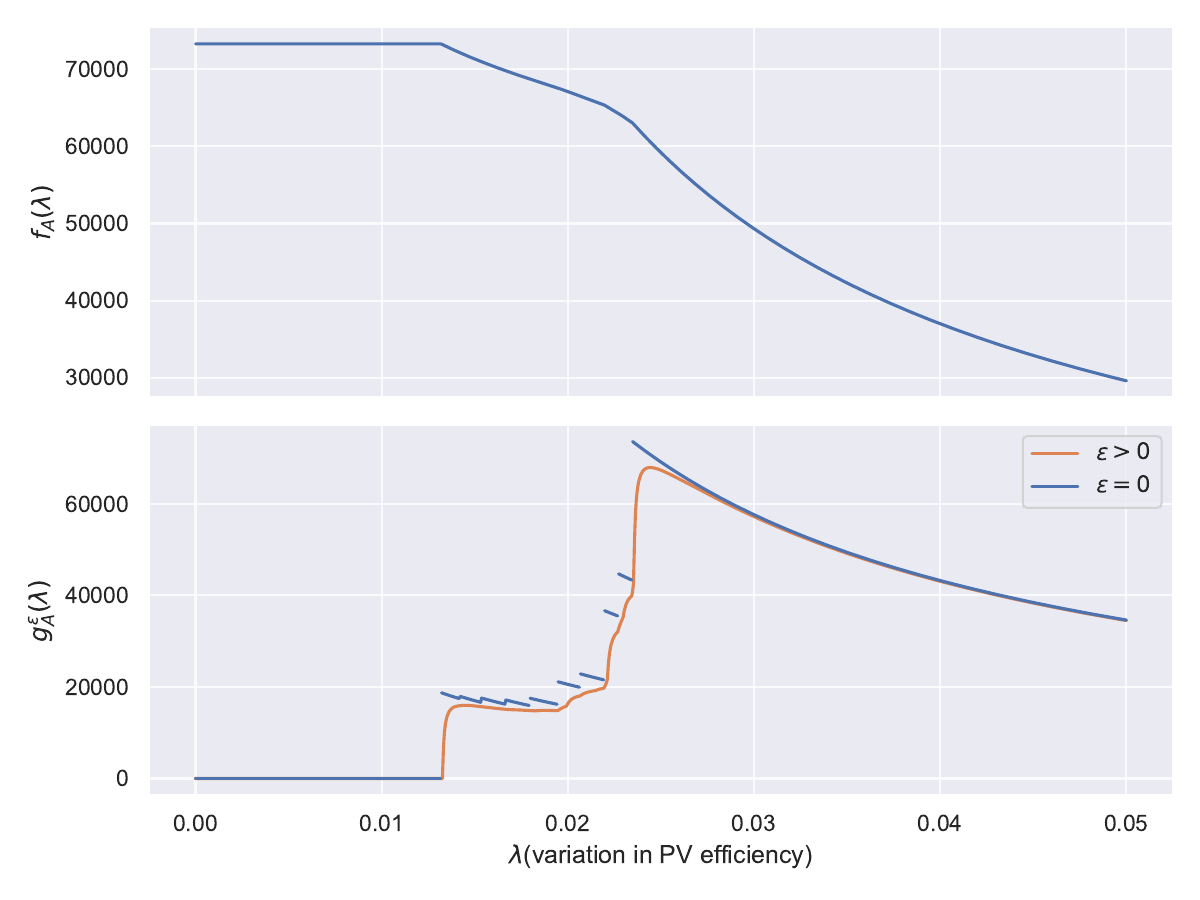}
    \caption{Sensitivity profiles for the matrix perturbation $\P_A(\lambda)$ in Example \ref{example_microgrid}. Top: optimal value function $f_A(\lambda)$. Bottom: primal $g_A(\lambda)$ and $g^\varepsilon_A(\lambda)$, defined here as the minimum value of total grid import over the set of optimal solutions.}
    \label{fig:microgrid_A}
\end{figure}





\section{Conclusion}\label{section:conclu}
In conclusion, this work studies the sensitivity analysis for linear programs under linear perturbations of (i) the objective function (via $\c$), (ii) the RHS (via $\b$), and (iii) the constraint matrix (via $A$). Rather than focusing on the optimal objective value, we propose an approach focused on decisions, consisting of selecting, via a linear criterion $\boldsymbol{\varphi}$, representative solutions from the set of optimal or near-optimal solutions. 

The results highlight a primal-dual asymmetry depending on the type of perturbation. When $\c$ is perturbed, the primal selection function $g_c(\lambda)$ may be discontinuous, as the optimal face can abruptly change dimension at critical values of $\lambda$. The introduction of a tolerance $\varepsilon>0$, however small, is sufficient to make $g^\varepsilon_c(\lambda)$ continuous. This near-optimal requirement is practically acceptable, since data uncertainty, measurement noise, and solver tolerances already prevent exact optimality in applications. Furthermore, for each interval where the optimal basis remains unchanged, $g^\varepsilon_c(\lambda)$ is a piecewise homographic function determined by only three scalar parameters, and the overall problem can be reformulated as a parametric problem of type $\P_A(\lambda)$. In contrast, the dual selection function $h_c(\lambda)$ is continuous without requiring any tolerance and is a piecewise function composed of piecewise linear convex functions.  When $\b$ is perturbed, the situation is reversed; $g_b(\lambda)$ shares the properties of $h_c(\lambda)$ and $h_b^\varepsilon(\lambda)$ those of $g_c^\varepsilon(\lambda)$.

Perturbations of the constraint matrix $A$ are distinguished by increased complexity. The value function $f_A(\lambda)$ is only piecewise rational and may itself be discontinuous, and the selection functions inherit this complexity. This case remains the least understood of the three and the one where computational tools, such as the bounding schemes of \citet{MiftariEtAl2024LHSBounding}, are most needed.

\section*{CRediT authorship contribution statement}
\noindent
\textbf{Baptiste Istace:} Conceptualization, Investigation, Methodology, Software, Visualisation, Writing - original draft;
\textbf{Guillaume Derval:} Conceptualization, Methodology, Software, Supervision, Writing - original draft;
\textbf{Bardhyl Miftari:} Conceptualization, Software, Supervision, Writing- review \& editing;
\textbf{Quentin Louveaux:} Conceptualization, Methodology, Supervision, Writing - review \& editing.

\bibliographystyle{elsarticle-harv}
\bibliography{bib}

\newpage
\appendix
\section{Main notation}\label{appendix:A}

This appendix lists the main notations used throughout this paper. For ease of reference, Table \ref{tab:notation} summarizes the notations related to primal and dual parametric problems, feasibility sets, sets of optimal solutions, value functions, selection functions, and perturbation parameters.

\begin{longtable}{@{}p{0.18\textwidth} p{0.76\textwidth}@{}}
\toprule
\textbf{Notation} & \textbf{Meaning} \\
\midrule
\endfirsthead

\toprule
\textbf{Notation} & \textbf{Meaning} \\
\midrule
\endhead

\bottomrule
\endfoot

\multicolumn{2}{@{}l}{\textit{Base primal--dual}}\\
$\P$   & Reference primal linear programming problem. \\
$\Q$   & Dual problem associated with $\P$. \\
$n$    & Number of variables in the primal problem.\\
$m$    & Number of constraints in the primal problem.\\
$\x$   & Vector of primal variables, with $\x\in\mathbb{R}^n$. \\
$\y$   & Vector of dual variables, with $\y\in\mathbb{R}^m$. \\
$\c$   & Cost coefficient vector in the primal objective, with $\c\in\mathbb{R}^n$. \\
$\b$   & Right-hand side vector of the primal constraints, with $\b\in\mathbb{R}^m$. \\
$A$    & Constraint matrix of the primal problem, with $A\in\mathbb{R}^{m\times n}$. \\
$K$    & Primal feasible region, defined by
         $K=\{\x\in\mathbb{R}_+^n \mid A\x\le \b\}$. \\
$K^D$  & Dual feasible region, defined by
         $K^D=\{\y\in\mathbb{R}^m \mid A^t \y\le \c, \y\le 0\}$. \\

\multicolumn{2}{@{}l}{\textit{Parametric problems}}\\
$\P_c(\lambda)$ & Primal parametric problem with perturbation of the cost vector $\c$. \\
$\P_b(\lambda)$ & Primal parametric problem with perturbation of the right-hand side $\b$. \\
$\P_A(\lambda)$ & Primal parametric problem with perturbation of the constraint matrix $A$. \\
$\Q_c(\lambda)$ & Dual parametric problem associated with $\P_c(\lambda)$. \\
$\Q_b(\lambda)$ & Dual parametric problem associated with $\P_b(\lambda)$. \\
$\Q_A(\lambda)$ & Dual parametric problem associated with $\P_A(\lambda)$. \\[0.3em]

\multicolumn{2}{@{}l}{\textit{Optimal value functions}}\\
$f_c(\lambda)$ & Optimal value function of $\P_c(\lambda)$. \\
$f_b(\lambda)$ & Optimal value function of $\P_b(\lambda)$. \\
$f_A(\lambda)$ & Optimal value function of $\P_A(\lambda)$. \\[0.3em]

\multicolumn{2}{@{}l}{\textit{Feasible sets}}\\
$K_c(\lambda)$   & Feasible set of $\P_c(\lambda)$; here $K_c(\lambda)=K$. \\
$K_b(\lambda)$   & Feasible set of $\P_b(\lambda)$, defined by
                   $K_b(\lambda)=\{\x\in\mathbb{R}_+^n \mid A\x\le \b+\lambda\d\}$. \\
$K_A(\lambda)$   & Feasible set of $\P_A(\lambda)$, defined by
                   $K_A(\lambda)=\{\x\in\mathbb{R}_+^n \mid (A+\lambda D)\x\le \b\}$. \\
$K_c^D(\lambda)$ & Dual feasible set under cost perturbation. \\
$K_b^D(\lambda)$ & Dual feasible set under right-hand-side perturbation; here
                   $K_b^D(\lambda)=K^D$. \\
$K_A^D(\lambda)$ & Dual feasible set under matrix perturbation. \\[0.3em]

\multicolumn{2}{@{}l}{\textit{Optimal solution sets}}\\
$S(\lambda)$     & Set of optimal primal solutions for a given value of $\lambda$. \\
$T(\lambda)$   & Set of optimal dual solutions for a given value of $\lambda$. \\
$S_c(\lambda)$   & Set of optimal primal solutions of $\P_c(\lambda)$. \\
$S_b(\lambda)$   & Set of optimal primal solutions of $\P_b(\lambda)$. \\
$S_A(\lambda)$   & Set of optimal primal solutions of $\P_A(\lambda)$. \\
$T_c(\lambda)$ & Set of optimal dual solutions of $\Q_c(\lambda)$. \\
$T_b(\lambda)$ & Set of optimal dual solutions of $\Q_b(\lambda)$. \\
$T_A(\lambda)$ & Set of optimal dual solutions of $\Q_A(\lambda)$. \\[0.3em]

\multicolumn{2}{@{}l}{\textit{Perturbation parameters}}\\
$\lambda$ & Scalar perturbation parameter. \\
$\Lambda$ & Parameter domain of $\lambda$. \\
$\d$ & Perturbation direction vector for $\c$ or $\b$. \\
$D$ & Perturbation direction matrix for the constraint matrix $A$. \\[0.3em]

\multicolumn{2}{@{}l}{\textit{Selection functions}}\\
$\boldsymbol{\varphi}$ & Vector defining variables of interest. \\
$g(\lambda)$ & Optimal value of the primal variables of interest, defined by $g(\lambda)=\min_{\x\in S(\lambda)} \boldsymbol{\varphi}^t\x$. \\
$h(\lambda)$ & Optimal value of the dual variables of interest, defined by $h(\lambda)=\min_{\y\in T(\lambda)} \boldsymbol{\varphi}^t\y$. \\
$g_c(\lambda)$ & Optimal value of the primal variables of interest under cost vector $\c$ perturbation. \\
$g_b(\lambda)$ & Optimal value of the primal variables of interest under RHS $\b$ perturbation. \\
$g_A(\lambda)$ & Optimal value of the primal variables of interest under matrix $A$ perturbation. \\
$h_c(\lambda)$ & Optimal value of the dual variables of interest under cost vector $\c$ perturbation. \\
$h_b(\lambda)$ & Optimal value of the dual variables of interest under RHS $\b$ perturbation. \\
$h_A(\lambda)$ & Optimal value of the dual variables of interest under matrix $A$ perturbation.  \\[0.3em]

\multicolumn{2}{@{}l}{\textit{Tolerance}}\\
$\varepsilon$ & Optimality tolerance used to define near-optimal solutions. \\
\caption{Main notation used in the paper}\label{tab:notation}\\
\end{longtable}

\newpage

\section{Details Sensitivity, when modifying $\b$}\label{appendix:B}

In this appendix, the asymmetry of the primal-dual between the cost vector perturbation $\c$ and the right-hand side $\b$ discussed in Section \ref{section:b} is detailed. Let $f_b(\lambda)$ be the optimal value function of the primal problem $\P_b(\lambda)$,

\begin{equation}\label{eq:fb_primal_dual}
\begin{aligned}
            f_b(\lambda) = \min_{\x}\quad & \c^{t}\x\\
            \text{s.t.}\quad & A\x \le \b + \lambda \d\\
            & \x \ge 0.
\end{aligned}
\end{equation}

\subsection{Primal form}
\label{sub:primal_b}

The primal variables are analyzed in the same way as the dual variables when the cost vector $\c$ varies, as shown in Section \ref{sub:dual_c}. Since $f_b(\lambda)$ is the minimum of $\c^t\x$ over the feasible set, the constraint $\c^t\x = f_b(\lambda)$ can be rewritten as $\c^t\x \leq f_b(\lambda)$. Let introduce $g_b(\lambda)$ such that 
\begin{equation}
    \begin{aligned}
        g_b(\lambda) &= \min_\x \quad  \boldsymbol{\varphi}^t\x\\
        \text{s.t. }&  A\x \leq \b + \lambda \d \\
        &\c^t\x \leq f_b(\lambda)\\
         & x \geq 0.
    \end{aligned}
\end{equation}

According to Theorem \ref{th:convexe}, $f_b(\lambda)$ is the maximum of multiple linear functions of $\lambda$, i.e., $f_b(\lambda) = \max \{ \alpha_i \lambda + \beta_i \mid i \in 1..k \},\ \forall \lambda \in \Lambda.$ The difficulty comes from the fact that $f_b(\lambda)$ is defined as a maximum of affine functions in $\lambda$. Consequently, the constraint $\c^t\x \leq f_b(\lambda)$ cannot be rewritten as a finite number of linear inequalities valid for all $\lambda$. Therefore, it is not possible to perform a global analysis over the entire domain. Instead, it is necessary to reason piece by piece, i.e., interval by interval, where $f_b(\lambda)$ is affine. More precisely, there is $p$ pieces of $f_b(\lambda)$ in intervals $(I_j)_p$ such that, on each of them, $f_b(\lambda)=\alpha_j\lambda+\beta_j$. By fixing an interval $I_j$ and taking $\lambda\in I_j$, the problem of defining $g^\varepsilon_b(\lambda)$ can then be written in the following form:
\begin{equation}\label{eq:gb_piece_en}
    \begin{aligned}
        g_b^{(j)}(\lambda)=\min_{\x}\quad & \boldsymbol{\varphi}^t\x\\
        \text{s.t.}\quad
        & A\x \leq \b + \lambda \d\\
        & \c^t\x \leq \alpha_j\lambda+\beta_j\\
        & \x \ge 0.
    \end{aligned}
\end{equation}

Then, $g_b(\lambda)$ can be studied by assembling the results obtained on each of the intervals. 
As with $h_c(\lambda)$, this piecewise analysis allows us to recover the standard behavior of parametric linear programs locally. On a fixed interval $I_j$, where the quasi-optimality constraint becomes affine in $\lambda$, the problem reduces to a standard right-hand side perturbation problem, and the usual sensitivity arguments apply. In particular, $g_b^{(j)}(\lambda)$ is a piecewise convex function that is linear on $I_j$ (it is linear as long as the optimal basis remains unchanged, and changes in slope can only occur when the optimal basis changes). Therefore, $g_b(\lambda)$ is a piecewise function composed of piecewise linear convex functions, obtained by concatenating the functions $g_b^{(j)}(\lambda)$ on the partition $(I_j)_p$. \\ Thus, as with $h_c(\lambda)$, this decomposition into subproblems is conceptually simple, and each subproblem can be handled using standard sensitivity tools. However, as Theorem \ref{th:murty} shows, the total number of pieces can be exponential in the worst case, which can make the calculation significantly more difficult.

\subsection{Dual form}\label{sub:dual_b}
\noindent
With regard to dual variables, here is the model : 
\begin{equation}
    \begin{aligned}
    h^\varepsilon_b(\lambda) = \min_{\y}\quad & \boldsymbol{\varphi}^t\y\\
    \text{s.t.}\quad & (\b+\lambda\d)^t \y \geq f_b(\lambda)-\varepsilon \\
    & A^t \y \le \c\\
    & \y \le 0.
    \end{aligned}
\end{equation}
Using the same principle as for $g^\varepsilon_c(\lambda)$, the model can be modified as follows: 
\begin{equation}
    \begin{aligned}
        h^\varepsilon_b(\lambda) = \min_\y& \quad \boldsymbol{\varphi}^t\y\\
        \text{s.t. } & (\b+\lambda\d)^t \y \ge \alpha_i \lambda + \beta_i-\varepsilon, \quad \forall i \in 1...k  \\ & A^t\y \le \c \\
         &  \y \le 0
    \end{aligned}
\end{equation}

Similar to $g^\varepsilon_c(\lambda)$, the function $h^\varepsilon_b(\lambda)$ can be analyzed using the same arguments, and the two models share the same properties. In particular, it has been shown in Theorem \ref{Berge_maximum} and Observation \ref{obs:equivalentA} that, for $\varepsilon>0$, the selected value function is continuous and can be reformulated as the optimal value of a parametric problem of the form $\P_A(\lambda)$. The same reasoning, therefore, applies to $h^\varepsilon_b(\lambda)$. Furthermore, on any interval where the optimal basis remains unchanged, the corresponding optimal solution (and therefore $h^\varepsilon_b(\lambda)$) is a homographic function. However, as shown by Theorem \ref{th:murty}, the total number of local pieces may be exponential in the worst case, which can make piecewise analysis computationally too expensive. This also justifies considering global approximation methods. In particular, since $h^\varepsilon_b(\lambda)$ can be viewed as a problem of type $\P_A(\lambda)$, the bounding methods developed by  \citet{MiftariEtAl2024LHSBounding} can also be applied here to compute lower and upper bounds over intervals of $\lambda$.

\end{document}